\theoremstyle{plain}
\newtheorem{lemma}{Lemma}[section]
\newtheorem{theorem}[lemma]{Theorem}
\theoremstyle{remark}
\newtheorem{remark}[lemma]{Remark}
\newtheorem{definition}[lemma]{Definition}
\author[O. Tout]{Omar Tout}
\address{LaBRI, Universit\'e Bordeaux 1, 351 cours de la Lib\'eration, 33 400
Talence, France}
\email{omar.tout@labri.fr}
\title[Structure coefficients of the Hecke algebra of $(\mathcal{S}_{2n},\mathcal{B}_n)$]{
Structure coefficients of the Hecke algebra of $(\mathcal{S}_{2n},\mathcal{B}_n)$}
\keywords{Hecke algebra of $(\mathcal{S}_{2n},\mathcal{B}_n)$, partial bijections, structure coefficients}
\subjclass[2010]{05E15}
\thanks{Partially supported by ANR grant PSYCO ANR-11-JS02-001}
\theoremstyle{plain}
\newtheorem{theoreme}{Theorem}[section]
\newtheorem{prop}[theoreme]{Proposition}
\newtheorem{ex}{Example}[section]
\newtheorem{cor}[theoreme]{Corollary}
\newtheorem{lem}[theoreme]{Lemma}
\theoremstyle{definition}
\theoremstyle{remark}
\newtheorem*{rem}{Remark}
\newtheorem{obs}[subsection]{Observation}
\newtheorem*{notation}{Notation}
\newtheorem*{fact}{Fact}
\date{}
\def\pr{\ast}
\DeclareMathOperator{\supp}{supp}
\DeclareMathOperator{\Id}{Id}
\begin{document}
\maketitle
\paragraph{Abstract.} The Hecke algebra of the pair $(\mathcal{S}_{2n},\mathcal{B}_n)$, where $\mathcal{B}_n$ is the hyperoctahedral subgroup of $\mathcal{S}_{2n}$, was introduced by James in 1961. It is a natural analogue of the center of the symmetric group algebra. In this paper, we give a polynomiality property of its structure coefficients. Our main tool is a combinatorial algebra which projects onto the Hecke algebra of $(\mathcal{S}_{2n},\mathcal{B}_n)$ for every $n$. To build it, by using partial bijections we introduce and study a new class of finite dimensional algebras.
\section{Introduction}
For a positive integer $n$, let $\mathcal{S}_n$ denote the symmetric group of permutations on the set $[n]:=\lbrace 1,2,\cdots,n\rbrace$, and let $\mathbb{C}[\mathcal{S}_n]$ denote the group-algebra of $\mathcal{S}_n$ over $\mathbb{C}$, the field of complex numbers. The center of $\mathbb{C}[\mathcal{S}_n]$, denoted by $Z(\mathbb{C}[\mathcal{S}_n])$ is a classical object in combinatorics. It is linearly generated by elements $\mathcal{Z}_\lambda$, indexed by partitions of $n$, where $\mathcal{Z}_\lambda$ is the sum of permutations of $[n]$ with cycle-type $\lambda$. The structure coefficients $c_{\lambda\delta}^{\rho}$ describe the product in this algebra, they are defined by the equation:
$$\mathcal{Z}_\lambda \mathcal{Z}_\delta=\sum_{\rho\text{ partition of } n}c_{\lambda\delta}^{\rho}\mathcal{Z}_\rho.$$
In other words, $c_{\lambda\delta}^{\rho}$ counts the number of pairs of permutations $(x,y)$ with cycle-type $\lambda$ and $\delta$ such that $x\cdot y=z$ for a fixed permutation $z$ with cycle-type $\rho$.
It is known that these coefficients count the number of embeddings of certain graphs into orientable surfaces  (see \cite{CoriHypermaps}). One of the tools used to calculate these coefficients is the representation theory of the symmetric group, see \cite[Lemma 3.3]{JaVi90}. In \cite[Theorem 2.1]{GoupilSchaefferStructureCoef}, Goupil and Schaeffer have a formula for $c_{\lambda\delta}^{\rho}$ if one of the partitions $\lambda,\delta$ and $\rho$ is equal to $(n)$. There are no formulas for $c_{\lambda\delta}^{\rho}$ in general.

In 1958, Farahat and Higman proved the polynomiality of the coefficients $c_{\lambda\delta}^{\rho}$ in $n$ when $\lambda$, $\delta$ and $\rho$ are fixed partitions, completed with parts equal to $1$ to get partitions of $n$, \cite[Theorem 2.2]{FaharatHigman1959}. More recently, in \cite{IvanovKerov1999}, by using objects called partial permutations, the same result is obtained by Ivanov and Kerov. This more recent proof provides a combinatorial description of the coefficients of the relevant polynomials.
\bigskip

Here, we consider the Hecke algebra of the pair $(\mathcal{S}_{2n},\mathcal{B}_n)$, denoted by $\mathbb{C}[\mathcal{B}_n\backslash \mathcal{S}_{2n}/\mathcal{B}_n]$, where $\mathcal{B}_n$ is the hyperoctahedral group. It was introduced by James in \cite{James1961} and it also has a basis indexed by partitions of $n$. The algebra $\mathbb{C}[\mathcal{B}_n\backslash \mathcal{S}_{2n}/\mathcal{B}_n]$ is a natural analogue of $Z(\mathbb{C}[\mathcal{S}_n])$ for several reasons. Goulden and Jackson proved in \cite{GouldenJacksonLocallyOrientedMaps} that its structure coefficients count graphs drawn on non-oriented surfaces. To get formulas for these coefficients, zonal characters are used instead of irreducible characters of the symmetric group, see  \cite[Section VII, 2]{McDo}.

In this paper we give a polynomiality property of the structure coefficients of the Hecke algebra of $(\mathcal{S}_{2n},\mathcal{B}_n)$. Namely, we prove that these coefficients can be written as the product of the number $2^nn!$ with a polynomial in $n$. In some specific basis, this polynomial has non-negative coefficients that have a combinatorial interpretation. Moreover, we are able to give an upper bound for its degree . Our proof is based on the construction of an universal algebra which projects onto the Hecke algebra of $(\mathcal{S}_{2n},\mathcal{B}_n)$ for every $n$\footnote{In this sense, we shall call it a universal algebra.}. This method was already used by Ivanov and Kerov in \cite{IvanovKerov1999}. What is original in our approach is that the product in our universal algebra is computed as an average of combinatorial objects called partial bijections of $n$. Recently, P.-L. Méliot has used this same idea in \cite{meliot2013partial} to give a polynomiality property for the structure coefficients of the center of the group-algebra $\mathbb{C}[GL(n,\mathbb{F}_q)]$, where $GL(n,\mathbb{F}_q)$ is the group of invertible $n\times n$ matrices with coefficients in $\mathbb{F}_q$. Because of the similarities between Meliot's construction and ours, we are convinced we should build a general framework in which such a result (polynomiality of the structure coefficients) always holds. This is the subject of future work.

A weaker version of our polynomiality result (without non-negativity of the coefficients) for the structure coefficients of Hecke algebra of $(\mathcal{S}_{2n},\mathcal{B}_n)$ has been established by an indirect approach using Jack polynomials in \cite[Proposition 4.4]{dolega2012kerov}. There is no combinatorial description in that proof. By a different approach than ours, in \cite{Aker20122465}, Aker and Can study the Hecke ring $\mathbb{C}[\mathcal{B}_n\backslash \mathcal{S}_{2n}/\mathcal{B}_n]$, however, it seems that there is a minor issue in their proof of polynomiality of the structure coefficients (\cite{Can}). A universal algebra also appears in this paper, but it does not have a combinatorial realization as ours.

As explained, our proof goes through the construction of an universal algebra which projects onto the Hecke algebra of $(\mathcal{S}_{2n},\mathcal{B}_n)$ for every $n$. We are able to give a link between this algebra, and the algebra of \textit{shifted symmetric functions}. Shifted symmetric functions have been introduced and studied by A. Okounkov and G. Olshansky in 1996, see \cite{1996q.alg.....8020O}. They are deformations of usual symmetric functions that display remarkable properties. 
\bigskip

The paper is organized as follows. In Section \ref{section 2}, we review all necessary definitions to describe the Hecke algebra of $(\mathcal{S}_{2n},\mathcal{B}_n)$. Then, we state our main result about its structure coefficients. We start Section \ref{section 3} by introducing partial bijections of $n$ then we build our universal algebra. We use this algebra in Section \ref{section 4} to prove our main result, Theorem \ref{Theorem 2.1}. In Section \ref{section 5}, we show how the universal algebra is related with the algebra of shifted symmetric functions and in Section \ref{section6} we exhibit some filtrations on this universal algebra, which implies the above mentioned upper bounds for the degree of the polynomials.  
\section{Definitions and statement of the main result}\label{section 2}
\subsection{Partitions}
Since partitions index bases of the algebras studied in this paper, we recall the main definitions. A \textit{partition} $\lambda$ is a list of integers $(\lambda_1,\ldots,\lambda_l)$ where $\lambda_1\geq \lambda_2\geq\ldots \lambda_l\geq 1.$ The $\lambda_i$ are called the \textit{parts} of $\lambda$; the \textit{size} of $\lambda$, denoted by $|\lambda|$, is the sum of all of its parts. If $|\lambda|=n$, we say that $\lambda$ is a partition of $n$ and we will write $\lambda\vdash n$. The number of parts of $\lambda$ is denoted by $l(\lambda)$. We will also use the exponential notation $\lambda=(1^{m_1(\lambda)},2^{m_2(\lambda)},3^{m_3(\lambda)},\ldots)$, where $m_i(\lambda)$ is the number of parts equal to $i$ in the partition $\lambda$. If $\lambda$ and $\delta$ are two partitions we define the \textit{union} $\lambda \cup \delta$ as the following partition:
$$\lambda \cup \delta=(1^{m_1(\lambda)+m_1(\delta)},2^{m_2(\lambda)+m_2(\delta)},3^{m_3(\lambda)+m_3(\delta)},\ldots).$$
A partition is called \textit{proper} if it does not have any part equal to 1. The proper partition associated to a partition $\lambda$ is the partition $\bar{\lambda}:=\lambda \setminus (1^{m_1(\lambda)})=(2^{m_2(\lambda)},3^{m_3(\lambda)},\ldots).$ 
\subsection{Permutations and Coset type}\label{section 2.2} For a permutation $\omega$, we use the word notation $\omega_1\,\omega_2\, \cdots \, \omega_n$, where $\omega_i=\omega(i)$. The set $\mathcal{S}_n$ of all permutations of $[n]$ is a group for the composition called the \textit{symmetric group} of size $n$.

To each permutation $\omega$ of $2n$ we associate a graph $\Gamma(\omega)$ with $2n$ vertices located on a circle. Each vertex is labelled by two labels (\textit{exterior} and \textit{interior}). The exterior labels run through natural numbers from $1$ to $2n$ around the circle. The interior label of the vertex with exterior label $i$ is $\omega(i)$. We link the vertices with exterior (resp. interior) labels $2i-1$ and $2i$ by an exterior (resp. interior) edge. As every vertex has degree 2, the graph $\Gamma(\omega)$ is a disjoint union of cycles since exterior and interior edges alternate, all cycles have even lengths $2\lambda_1\geq 2\lambda_2\geq 2\lambda_3\geq \cdots$. The \textit{coset-type} of $\omega$ denoted by $ct(\omega)$ is the partition $(\lambda_1,\lambda_2,\lambda_3,\ldots)$ of $n$.
\begin{ex}\label{exemple du coset type} The graph $\Gamma(\omega)$ associated to the permutation $\omega=2\,4\,9\,3\,1\,10\,5\,8\,6\,7\in \mathcal{S}_{10}$ is drawn on Figure~\ref{fig:coset-type}.
\begin{figure}[htbp]
\begin{center}
\begin{tikzpicture}
\foreach \angle / \label in
{ 0/2, 36/4, 72/9, 108/3, 144/1, 180/10, 216/5,
252/8, 288/6, 324/7}
{
\node at (\angle:2cm) {\footnotesize $\bullet$};
\draw[red] (\angle:1.7cm) node{\textsf{\label}};
}
\foreach \angle / \label in
{ 0/1, 36/2, 72/3, 108/4, 144/5, 180/6, 216/7,
252/8, 288/9, 324/10}
{\draw (\angle:2.3cm) node{\textsf{\label}};}
\draw (0:2cm) .. controls + (5mm,0) and +(.5,-.1) .. (36:2cm);
\draw[red] (36:2cm) .. controls + (-.5,.1) and + (0,-.2) .. (108:2);
\draw (108:2cm) .. controls + (0,.2) and + (0,.2) .. (72:2);
\draw[red]	(72:2)		  .. controls + (0,-.2) and + (+.5,.1) .. (180:2);
\draw	(180:2)		  .. controls + (-5mm,0) and + (-0.5,0) .. (144:2);
\draw[red]	(144:2)		  .. controls + (1.2,0) and + (0,.2) .. (0:2);
\draw[red] (216:2cm) .. controls + (0,15mm) and +(-.5,-.1) .. (288:2cm);
\draw	(288:2cm)		    .. controls + (1,0.1) and + (0,-.2) .. (324:2);
\draw[red]	(324:2)		    .. controls + (-0.1,1.5) and + (1,0.1) .. (252:2);
\draw	(252:2)		    .. controls + (0,0) and + (0,-1) .. (216:2);
\end{tikzpicture}
\caption{The graph $\Gamma(\omega)$ from Example \ref{exemple du coset type}.}
\label{fig:coset-type}
\end{center}
\end{figure}
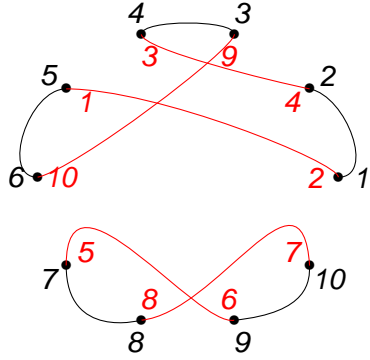
It has two cycles of length 6 and 4, so $ct(\omega)=(3,2)$.
\end{ex}
For every  $k\geq 1$, we set $\rho(k):=\lbrace 2k-1,2k\rbrace$. The \textit{hyperoctahedral group} $\mathcal{B}_n$ is the subgroup of $\mathcal{S}_{2n}$ of permutations $\omega$ such that, for every $1\leq k\leq n$, there exists $1\leq k'\leq n$ with $\omega(\rho(k))=\rho(k')$. In other words $\mathcal{B}_n=\lbrace \omega\in \mathcal{S}_{2n}~|~ct(\omega)=(1^n)\rbrace$. For example, $4\,3\,1\,2\,6\,5\in \mathcal{B}_3$.

A $\mathcal{B}_n$-\textit{double coset} of $\mathcal{S}_{2n}$ is the set $\mathcal{B}_nx\mathcal{B}_n=\lbrace bxb'~;~b,b'\in \mathcal{B}_n\rbrace$ for some $x\in \mathcal{S}_{2n}$. It is known, see \cite[page 401]{McDo}, that two permutations of $\mathcal{S}_{2n}$ are in the same $\mathcal{B}_n$-double coset if and only if they have the same coset-type. Thus, if $x\in \mathcal{S}_{2n}$ has coset-type $\lambda$, we have:
$$\mathcal{B}_nx\mathcal{B}_n=\lbrace y\in \mathcal{S}_{2n}\text{ such that } ct(y)=\lambda\rbrace.$$
\subsection{The Hecke algebra of $(\mathcal{S}_{2n},\mathcal{B}_n)$}
The \textit{symmetric group algebra of $n$}, denoted by $\mathbb{C}[\mathcal{S}_n]$, is the algebra over $\mathbb{C}$ linearly generated by all permutations of $[n]$. The group $\mathcal{B}_n\times \mathcal{B}_n$ acts on $\mathbb{C}([\mathcal{S}_{2n}])$ by the following action:
$(b,b')\cdot x=bxb'^{-1},$ called the $\mathcal{B}_n\times \mathcal{B}_n$-action. The \textit{Hecke algebra} of $(\mathcal{S}_{2n},\mathcal{B}_n)$, denoted by $\mathbb{C}[\mathcal{B}_n\setminus \mathcal{S}_{2n}/\mathcal{B}_n]$, is the sub-algebra of $\mathbb{C}[\mathcal{S}_{2n}]$ of elements invariant under the $\mathcal{B}_n\times \mathcal{B}_n$-action. Recall that $\mathcal{B}_n$-double cosets are indexed by partitions of $n$. Here, we rather index the basis by proper partitions of size less or equal to $n$, which are trivially in bijection with partitions of $n$. Therefore, the set 
$$\lbrace K_\lambda(n):\lambda\text{ is a proper partition with } |\lambda|\leq n\rbrace$$ 
forms a basis for $\mathbb{C}[\mathcal{B}_n\setminus \mathcal{S}_{2n}/\mathcal{B}_n]$, where $K_\lambda(n)$ is the sum of all permutations from $\mathcal{S}_{2n}$ with coset-type $\lambda \cup 1^{n-|\lambda|}$. So, for any two proper partitions $\lambda$ and $\delta$ with size at most $n$, there exist complex numbers $\alpha_{\lambda\delta}^{\rho}(n)$ such that:
\begin{equation}\label{equation 1}
K_{\lambda}(n)\cdot K_{\delta}(n)=\sum_{\rho\text{ proper partition } \atop {|\rho|\leq n}}\alpha_{\lambda\delta}^{\rho}(n)K_{\rho}(n).
\end{equation}

\subsection{Main result}
In this paper, we obtain a polynomiality property for the structure coefficients $\alpha_{\lambda\delta}^{\rho}(n)$ of the Hecke algebra of $(\mathcal{S}_{2n},\mathcal{B}_n)$. More precisely, we prove the following theorem. We will use the standard notation $(n)_{k}:=\frac{n!}{(n-k)!}=n(n-1)\cdots (n-k+1)$.
\begin{theoreme} \label{Theorem 2.1}
Let $\lambda$, $\delta$ and $\rho$ be three proper partitions. Than we have:
$$\alpha_{\lambda\delta}^{\rho}(n)=\left\{
\begin{array}{ll}
  2^nn!f_{\lambda\delta}^{\rho}(n) & \qquad \mathrm{if}\quad n\geq |\rho|,\\\\
  0 & \qquad \mathrm{if}\quad n< |\rho|, \\
 \end{array}
 \right.$$
 where $\displaystyle{  f_{\lambda\delta}^{\rho}(n)=\sum_{j=0}^{|\lambda|+|\delta|-|\rho|}a_j(n-|\rho|)_{j}}$ is a polynomial in $n$ and the $a_j$'s are non-negative rational numbers.
\end{theoreme}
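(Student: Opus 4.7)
The plan is to follow an Ivanov--Kerov style strategy by constructing a \emph{universal algebra} $\mathcal{A}_\infty$, with a distinguished basis indexed by proper partitions, together with a family of algebra morphisms $\pi_n:\mathcal{A}_\infty\to\mathbb{C}[\mathcal{B}_n\backslash\mathcal{S}_{2n}/\mathcal{B}_n]$ (one for each $n\ge 1$) such that the product of two basis elements in $\mathcal{A}_\infty$ admits a combinatorial description independent of $n$. Any such construction lets us transport the polynomial dependence of the universal structure constants on $n$ down to the Hecke algebra side, essentially automatically.

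To build $\mathcal{A}_\infty$, I would work with \emph{partial bijections} compatible with the hyperoctahedral pair structure $\rho(k)=\{2k-1,2k\}$: a partial bijection is a bijection $f:A\to B$ whose source $A$ and target $B$ are finite unions of such pairs in $\mathbb{N}$, to which one associates a coset-type by the cycle construction of Section~\ref{section 2.2}. After quotienting by a suitable action of the infinite hyperoctahedral group on both source and target, the equivalence classes are indexed by proper partitions, and the universal basis element $K_\lambda^\infty$ is (formally) the sum of all partial bijections of coset-type $\lambda$. The product $K_\lambda^\infty K_\delta^\infty$ is defined by averaging: one embeds representatives of the two classes on a common pair-structured support and composes the resulting bijections. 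The morphism $\pi_n$ sends a class of partial bijection $f$ to the sum of all extensions of $f$ to full permutations of $\mathcal{S}_{2n}$ by adjoining identity pairs; it is this extension step that will produce, up to the combinatorial count of embeddings, the factor $2^n n!=|\mathcal{B}_n|$ together with a falling factorial $(n-|\rho|)_j$.

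Assuming the universal product takes the form
\begin{equation*}
K_\lambda^\infty\,K_\delta^\infty \;=\; \sum_{\rho,\,j\ge 0} b_{\lambda\delta}^{\rho,j}\;K_{\rho,j}^\infty,
\end{equation*}
where $b_{\lambda\delta}^{\rho,j}\in\mathbb{Q}_{\ge 0}$ counts (with suitable rational weights) pairs of partial bijections of coset-types $\lambda,\delta$ whose composition has coset-type $\rho$ on a support of size $|\rho|+j$, applying $\pi_n$ and reading off the coefficient of $K_\rho(n)$ should yield exactly
\begin{equation*}
\alpha_{\lambda\delta}^{\rho}(n)\;=\;2^n n!\sum_{j\ge 0}a_j\,(n-|\rho|)_j
\end{equation*}
with $a_j\ge 0$, as claimed. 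Vanishing for $n<|\rho|$ follows from the fact that no partial bijection whose support has size $\ge 2|\rho|$ embeds into $[2n]$ when $n<|\rho|$, equivalently from $(n-|\rho|)_j=0$ in that range (noting that the $j=0$ summand contributes only when $n\ge|\rho|$). The degree bound $j\le|\lambda|+|\delta|-|\rho|$ comes from a filtration by total support size: any product of partial bijections of coset-types $\lambda$ and $\delta$ is supported on at most $|\lambda|+|\delta|$ points, so a summand of coset-type $\rho$ and support size $|\rho|+j$ forces $j\le|\lambda|+|\delta|-|\rho|$.

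The main obstacle, as usual with this kind of construction, is laying the foundations of $\mathcal{A}_\infty$ cleanly: picking the correct equivalence relation on partial bijections so that the averaging product is well-defined and associative, and then checking that each $\pi_n$ really is an algebra morphism, not just a linear map. Once those structural verifications are completed, polynomiality, non-negativity and the degree bound all follow essentially by inspection of the combinatorial formula for the universal product; the link with shifted symmetric functions announced for Section~\ref{section 2} should provide an independent conceptual explanation of the polynomial dependence on $n$.
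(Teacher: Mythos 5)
Your proposal follows essentially the same route as the paper: a universal algebra built from partial bijections whose domains and codomains are unions of pairs $\rho(k)$, an averaging product over trivial extensions, projection morphisms to each $\mathbb{C}[\mathcal{B}_n\backslash\mathcal{S}_{2n}/\mathcal{B}_n]$ obtained by summing over extensions to full permutations (producing the factor $2^nn!$ and the falling factorials), and a support-size filtration giving the degree bound $j\le|\lambda|+|\delta|-|\rho|$. The steps you defer as ``structural verifications'' are precisely where the paper's work lies --- the associativity of the averaged product, the homomorphism property of $\psi_n$, and above all the $n$-independence of the universal structure constants, which the paper obtains not by a direct construction on $\mathbb{N}$ but as a projective limit of finite algebras $\mathcal{A}_n$ along carefully normalized connecting maps $\varphi_n$ --- but your outline correctly anticipates all of them.
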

\begin{ex}
Let us compute the structure coefficient $\alpha_{(2)(2)}^{\emptyset}(n)$. We have:
$$K_{(2)}(n)=\sum_{\omega\in \mathcal{S}_{2n} \atop {ct(\omega)=(2)\cup (1^{n-2})}}\omega.$$
To find the coefficient of $K_\emptyset(n)$ in $K_{(2)}(n)\cdot K_{(2)}(n)$, we fix a permutation with coset-type $(1^n)$, for example $\Id_{2n}$, and we look in how many ways we can obtain $\Id_{2n}$ as a product of two elements $\sigma\cdot \beta$ where $ct(\sigma)=ct(\beta)=(2,1^{n-2})$. Thus we are looking for the number of permutations $\sigma\in \mathcal{S}_{2n}$ such that $ct(\sigma)=ct(\sigma^{-1})=(2,1^{n-2})$. But, for any $\sigma\in \mathcal{S}_{2n}$ with $ct(\sigma)=(2,1^{n-2})$, its inverse has the same coset-type. Therefore $\alpha_{(2)(2)}^{\emptyset}(n)$ is the number of permutations of coset-type $(2,1^{n-2})$, which is $$\frac{(2^nn!)^2}{2^{n-1}(2 \cdot (n-2)!)}=n(n-1)2^nn!,$$ by \cite[page 402]{McDo}.
\end{ex}
\subsection{Major steps of the proof}\label{section 2.5}
The idea of the proof is to build a universal algebra $\mathcal{A}_\infty$ over $\mathbb{C}$ satisfying the following properties:
\begin{enumerate}[topsep=1pt, partopsep=1pt, itemsep=1pt, parsep=1pt]
\item For every $n\in \mathbb{N}^*$, there exists a morphism of algebras $\theta_n:\mathcal{A}_\infty \longrightarrow \mathbb{C}[\mathcal{B}_n\setminus \mathcal{S}_{2n}/\mathcal{B}_n].$
\item Every element $x$ in $\mathcal{A}_\infty$ is written in a unique way as an infinite linear combination of elements $T_\lambda$, indexed by partitions. This implies that, for any two partitions $\lambda$ and $\delta$, there exist non-negative rational numbers $b_{\lambda\delta}^\rho$ such that:
\begin{equation}\label{equation 2}
T_\lambda \pr T_\delta=\sum_{\rho\text{ partition}}b_{\lambda\delta}^\rho T_\rho.
\end{equation} 
\item The morphism $\theta_n$ sends $T_\lambda$ to a multiple of $K_{\bar{\lambda}}(n)$.
\end{enumerate}
To build $\mathcal{A}_\infty$, we use combinatorial objects called \textit{partial bijections.} For every $n\in \mathbb{N}^*$, we construct an algebra $\mathcal{A}_n$ using the set of partial bijections of size $n$. The algebra $\mathcal{A}_\infty$ is defined as the projective limit of this sequence $(\mathcal{A}_n)$.

The projection $p_n:\mathcal{A}_\infty\rightarrow \mathcal{A}_n$ involves coefficients which are polynomials in $n$. By defining the extension of a partial bijection of $n$ to the set $[2n]$, we construct a morphism from $\mathcal{A}_n$ to $\mathbb{C}[\mathcal{B}_n\setminus \mathcal{S}_{2n}/\mathcal{B}_n]$. Its coefficients involve the number $2^nn!$. It turns out that the morphism $\theta_n$ is the composition of those two morphisms: 
$$\xymatrix{
    \mathcal{A}_\infty \ar[dd]_{\theta_{n}} \ar[rd]^*[@]{\hbox to 0pt{\hss\txt{$p_n$}\hss}}& \\
     & \mathcal{A}_n \ar[dl] \\
     \mathbb{C}[\mathcal{B}_n\setminus \mathcal{S}_{2n}/\mathcal{B}_n]}$$
The final step consists of applying the chain of homomorphisms in the diagram above to equation (\ref{equation 2}).
\begin{rem}
This method is based on Ivanov and Kerov's one to get the polynomiality of the structure coefficients of the center of the symmetric group algebra (see \cite{IvanovKerov1999} for more details). Nevertheless, our construction is more complicated, mainly because a partial bijection does not have a unique trivial extension to a given set, see Definition \ref{definition 3.2}.
\end{rem}
\section{The partial bijection algebra} \label{section 3}
In this section we define the set of partial bijections of $n$. With this set, we build the algebras and homomorphisms that appear in the diagram above.
\subsection{Definition}
We start by defining partial bijections of $n$ and the partial bijection algebra. Then, we introduce the notion of trivial extension of a partial bijection of $n$ and we use it to build a homomorphism between the partial bijection algebra of $n$ and the symmetric group algebra of $2n$.

Let $\mathbb{N}^{*}$ denotes the set of positive integers. For $n\in \mathbb{N}^{*}$, we define $\mathbf{P}_{n}$ to be the following set: $$\mathbf{P}_{n}:=\lbrace \rho(k_1)\cup\cdots\cup \rho(k_i)~|~ 1 \leq i \leq n,~1\leq k_1<\cdots < k_i \leq n \rbrace.$$
\begin{definition}
A \textit{partial bijection} of $n$ is a triple $(\sigma, d ,d^{'})$ where $d,d^{'}\in \mathbf{P}_n$ and $\sigma:d\longrightarrow d^{'}$ is a bijection. The set $d$ is the domain of $(\sigma, d ,d^{'})$ while $d^{'}$ is its codomain. We denote by $Q_n$ the set of all partial bijections of $n$.
\end{definition}

For any positive integer $n$, let $R_n$ be the set of all one-to-one maps $f:d(f)\longrightarrow c(f)$ where $c(f),d(f)\subseteq [n].$ The set $R_n$ with the composition of maps is a monoid -- that is the composition is associative and $R_n$ has an identity element -- called the \textit{symmetric inverse semigroup}. With this composition, the set of partial bijections $Q_n$ forms a submonoid of $R_{2n}.$ It is known, see \cite{Solomon2002309}, that $R_{2n}$ is in bijection with the \textit{hook monoid} $\mathcal{R}_{2n}.$ It is important to notice that this obvious structure on $Q_n$ does not enter the picture in here. The useful product in this work will be defined later in this section.

It should be clear that
$$\displaystyle{| Q_n|=\sum_{k=0}^{n}\begin{pmatrix}
n\\
k
\end{pmatrix}^2 (2k)!}.$$
A permutation $\sigma$ of $2n$ can be written as $(\sigma,[2n],[2n])$, so the set $\mathcal{S}_{2n}$ can be considered as a subset of $Q_n$.\\
\begin{notation}For any partial bijection $\alpha$, we will use the convention that $\sigma$ (resp. $d$, $d^{'}$) is the first (resp. second, third) element of the triple defining $\alpha$. The same convention holds for $\widetilde{\alpha}$, $\alpha_i$, $\hat{\alpha}$ \ldots
\end{notation}
\begin{obs}
In the same way as in Section \ref{section 2.2}, we can associate to each partial bijection $\alpha$ of $n$ a graph $\Gamma (\alpha)$ with $|d|$ vertices placed on a circle. The exterior (resp. interior) labels are the elements of the set $d$ (resp. $d^{'}$). Since the sets $d$ and $d^{'}$ are in $\mathbf{P}_n$, we can link $2i$ with $2i-1$ as in the case $d=d^{'}=[2n]$. So, the definition of coset-type extends naturally to partial bijections. We denote by $ct(\alpha)$ or $ct(\sigma)$ the coset-type of a partial bijection $\alpha$. 
\end{obs}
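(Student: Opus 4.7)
The plan is to verify that the graph construction in Section \ref{section 2.2} transfers verbatim to the partial-bijection setting once we replace $[2n]$ by $d$ on the exterior side and by $d'$ on the interior side. The only property of $[2n]$ actually used in Section \ref{section 2.2} is the pairing structure $[2n]=\rho(1)\cup\rho(2)\cup\cdots\cup\rho(n)$, and this pairing structure is built into the definition of $\mathbf{P}_n$. So first I would unpack the hypothesis: by definition of $\mathbf{P}_n$, both $d$ and $d'$ are disjoint unions of pairs $\rho(k)=\{2k-1,2k\}$. In particular $|d|=|d'|$ is even, say $|d|=2m$ with $m\leq n$, and $d=\bigcup_{k\in I}\rho(k)$, $d'=\bigcup_{k\in I'}\rho(k)$ for some $I,I'\subseteq[n]$ with $|I|=|I'|=m$.

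Next I would construct $\Gamma(\alpha)$ with $2m$ vertices, each carrying an exterior label $x\in d$ and, on that same vertex, an interior label $\sigma(x)\in d'$. For every $k\in I$, I would join the two vertices whose exterior labels are $2k-1$ and $2k$ by an exterior edge; this is legitimate precisely because $d\in\mathbf{P}_n$ guarantees that $2k-1\in d\Leftrightarrow 2k\in d$. Dually, for every $k\in I'$, I would join the two vertices whose interior labels are $2k-1$ and $2k$ by an interior edge, where the bijectivity of $\sigma$ ensures that each element of $d'$ appears as an interior label on exactly one vertex, and $d'\in\mathbf{P}_n$ ensures the interior edges form a perfect matching. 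Consequently every vertex carries exactly one exterior edge and one interior edge, so $\Gamma(\alpha)$ is $2$-regular and decomposes as a disjoint union of cycles.

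Finally, along any cycle of $\Gamma(\alpha)$ exterior and interior edges alternate by construction, hence every cycle has even length. Writing these lengths as $2\lambda_1\geq 2\lambda_2\geq\cdots\geq 2\lambda_\ell$ with $\lambda_1+\cdots+\lambda_\ell=m$, I would define $ct(\alpha):=(\lambda_1,\ldots,\lambda_\ell)$, a partition of $|d|/2$. Specializing to $d=d'=[2n]$ and $\sigma\in\mathcal{S}_{2n}\subseteq Q_n$ recovers the definition of Section \ref{section 2.2} on the nose, so the notation $ct(\sigma)=ct(\alpha)$ is unambiguous. There is no real obstacle: the pairing axiom defining $\mathbf{P}_n$ is designed exactly to make the exterior/interior linking rules of Section \ref{section 2.2} still make sense, and the only nontrivial check, the degree-two property at each vertex, is immediate from this pairing together with the bijectivity of $\sigma$.
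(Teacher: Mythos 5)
Your proposal is correct and follows exactly the reasoning the paper intends: the Observation is justified in the paper by the single remark that $d,d'\in\mathbf{P}_n$ permits the same linking of $2i-1$ with $2i$ as in Section \ref{section 2.2}, and your write-up simply makes explicit the degree-two/alternation argument behind that remark. There is no divergence in approach, only a more detailed verification of the same construction.
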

\begin{ex}\label{coset type partial bijection} Let $\alpha=(\sigma,d,d')$ be the partial bijection of $16$ where $d=\lbrace 3,4,5,6,9,10,11,12,13,14\rbrace$, $d'=\lbrace 1,2,3,4,7,8,9,10,15,16\rbrace$ and $\sigma$ is given by the following two lines notation: 
$$\sigma=\begin{matrix}
3&4&5&6&9&10&11&12&13&14\\
9&16&1&15&10&2&4&8&3&7
\end{matrix},$$
which means that $\sigma(3)=9$, $\sigma(4)=1$ and so on. The graph $\Gamma(\alpha)$ is drawn on Figure~\ref{fig:coset-type partial bijection}.
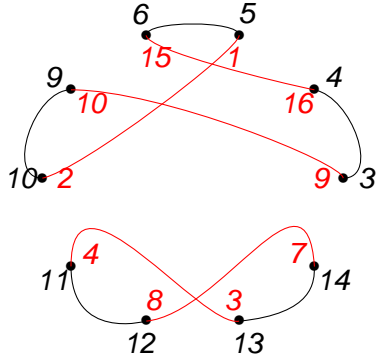
\begin{figure}[htbp]
\begin{center}
\begin{tikzpicture}
\foreach \angle / \label in
{ 0/9, 36/16, 72/1, 108/15, 144/10, 180/2, 216/4,
252/8, 288/3, 324/7}
{
\node at (\angle:2cm) {\footnotesize $\bullet$};
\draw[red] (\angle:1.7cm) node{\textsf{\label}};
}
\foreach \angle / \label in
{ 0/3, 36/4, 72/5, 108/6, 144/9, 180/10, 216/11,
252/12, 288/13, 324/14}
{\draw (\angle:2.3cm) node{\textsf{\label}};}
\draw (0:2cm) .. controls + (5mm,0) and +(.5,-.1) .. (36:2cm);
\draw[red] (36:2cm) .. controls + (-.5,.1) and + (0,-.2) .. (108:2);
\draw (108:2cm) .. controls + (0,.2) and + (0,.2) .. (72:2);
\draw[red]	(72:2)		  .. controls + (0,-.2) and + (+.5,.1) .. (180:2);
\draw	(180:2)		  .. controls + (-5mm,0) and + (-0.5,0) .. (144:2);
\draw[red]	(144:2)		  .. controls + (1.2,0) and + (0,.2) .. (0:2);
\draw[red] (216:2cm) .. controls + (0,15mm) and +(-.5,-.1) .. (288:2cm);
\draw	(288:2cm)		    .. controls + (1,0.1) and + (0,-.2) .. (324:2);
\draw[red]	(324:2)		    .. controls + (-0.1,1.5) and + (1,0.1) .. (252:2);
\draw	(252:2)		    .. controls + (0,0) and + (0,-1) .. (216:2);
\end{tikzpicture}
\caption{The graph $\Gamma(\alpha)$.}
\label{fig:coset-type partial bijection}
\end{center}
\end{figure}
It has two cycles of length 6 and 4, so $ct(\alpha)=(3,2)$.
\end{ex}
\begin{definition}\label{definition 3.2}
Let $(\sigma,d,d^{'})$ and $(\widetilde{\sigma},\widetilde{d},\widetilde{d^{'}})$ be two partial bijections of $n$. We say that $(\widetilde{\sigma},\widetilde{d},\widetilde{d^{'}})$ is a \textit{trivial extension} of $(\sigma,d,d^{'})$ if:
$$d\subseteq \widetilde{d},~ \widetilde{\sigma}_{|_{d}}=\sigma ~ \text{and} ~ ct(\widetilde{\sigma})=ct(\sigma)\cup \Big(1^\frac{{|\widetilde{d}\setminus d|}}{2}\Big).$$
We denote by $P_\alpha(n)$ the set of all trivial extensions of $\alpha$ in $Q_n$.
\end{definition}
\begin{ex}
Let $\alpha$ be the partial bijection of $16$ given in Example \ref{coset type partial bijection}. Let $\widetilde{d}=d\cup \lbrace 1,2,15,16\rbrace$, $\widetilde{d^{'}}=d'\cup \lbrace 5,6,11,12\rbrace$ and consider the following bijection: $$\widetilde{\sigma}=
\setcounter{MaxMatrixCols}{16}
\begin{matrix}
\bf{1}&\bf{2}&3&4&5&6&9&10&11&12&13&14&\bf{15}&\bf{16}\\
\bf{12}&\bf{11}&9&16&1&15&10&2&4&8&3&7&\bf{5}&\bf{6}
\end{matrix}.$$
Then, $\widetilde{\alpha}=(\widetilde{\sigma},\widetilde{d},\widetilde{d^{'}})$ is a trivial extension of $\alpha$. In the same way $\hat{\alpha}=(\hat{\sigma},\hat{d},\hat{d^{'}})$, where $\hat{d}=\hat{d^{'}}=[16]$ and 
$$\hat{\sigma}=
\setcounter{MaxMatrixCols}{16}
\begin{matrix}
\bf{1}&\bf{2}&3&4&5&6&\bf{7}&\bf{8}&9&10&11&12&13&14&\bf{15}&\bf{16}\\
\bf{13}&\bf{14}&9&16&1&15&\bf{12}&\bf{11}&10&2&4&8&3&7&\bf{6}&\bf{5}
\end{matrix},$$
is also a trivial extension of $\alpha.$
\end{ex}
\begin{lem}\label{Lemma 3.1}
Let $\alpha$ be a partial bijection of $n$ and $X$ an element of $\mathbf{P}_n$ such that $d\subseteq X$. The number of trivial extensions $\widetilde{\alpha}$ of $\alpha$ such that $\widetilde{d}=X$ is
$$(2n-|d|)\cdot (2n-|d|-2)\cdots (2n-|d|-|X\setminus d|+2)=2^{\frac{|X\setminus d|}{2}}\Big(n-\frac{|d|}{2}\Big)_{\frac{|X\setminus d|}{2}}.$$ 
We have the same formula for the number of trivial extensions $\widetilde{\alpha}$ such that $\widetilde{d'}=X$.
\end{lem}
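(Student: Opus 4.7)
The plan is to interpret the coset-type condition on $\widetilde\sigma$ as a pairing condition: the extra elements in $\widetilde d \setminus d$ must be carried by $\widetilde\sigma$ to a set of equally many elements of $\widetilde{d'} \setminus d'$ in a way that contributes only parts equal to $1$ to the coset-type, i.e.\ cycles of length $2$ in the associated graph $\Gamma(\widetilde\sigma)$.

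Unpacking the definition of coset-type recalled in Section~\ref{section 2.2}, a cycle of length $2$ in $\Gamma(\widetilde\sigma)$ is exactly a pair of vertices whose two exterior labels form some $\rho(k)$ and whose two interior labels form some $\rho(k')$. Hence the condition $ct(\widetilde\sigma)=ct(\sigma)\cup(1^{|\widetilde d\setminus d|/2})$, combined with $\widetilde\sigma_{|d}=\sigma$, is equivalent to: the set $\widetilde d \setminus d$ is a disjoint union of pairs $\rho(k)$ and, for each such pair, $\widetilde\sigma(\rho(k)) = \rho(k')$ for some pair $\rho(k')$ disjoint from $d'$ (the new pairs $\rho(k')$ being pairwise distinct, so that $\widetilde d'\in\mathbf{P}_n$ and $\widetilde\sigma$ is a bijection).

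With $X$ fixed, the set $X\setminus d$ is a fixed union of $m := |X\setminus d|/2$ pairs $\rho(k_1),\ldots,\rho(k_m)$. To build $\widetilde\sigma$ extending $\sigma$ with $\widetilde d=X$, I process these $m$ pairs in order: for $\rho(k_i)$ I choose (i) an unused target pair $\rho(k')$ among the $n - |d|/2$ pairs disjoint from $d'$, minus the $i-1$ already consumed by $\rho(k_1),\ldots,\rho(k_{i-1})$, giving $n-|d|/2-(i-1)$ options, and (ii) one of the $2$ bijections $\rho(k_i)\to\rho(k')$. Multiplying,
\[
\prod_{i=1}^{m}2\Bigl(n-\tfrac{|d|}{2}-(i-1)\Bigr) = 2^{m}\Bigl(n-\tfrac{|d|}{2}\Bigr)_{m} = (2n-|d|)(2n-|d|-2)\cdots(2n-|d|-2m+2),
\]
which is the claimed count. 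The statement for fixed $\widetilde d' = X$ follows immediately by symmetry: applying the result just proved to the inverse partial bijection $\alpha^{-1}=(\sigma^{-1},d',d)$ gives exactly the same count because $|d'|=|d|$.

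The only subtle point—and the one worth writing out carefully—is justifying the equivalence in the second paragraph: that the coset-type constraint forces the newly added exterior pairs $\rho(k)$ to be sent by $\widetilde\sigma$ \emph{to} entire pairs $\rho(k')$, rather than merely to pairs of elements that happen to sit inside $\widetilde d'$. This follows because the graph $\Gamma(\widetilde\sigma)$ is built by alternating exterior edges (on pairs $\rho(k)$) and interior edges (on pairs $\rho(k')$), so any vertex of $\Gamma(\widetilde\sigma)$ whose exterior partner lies in $d$ automatically sits in a cycle of length at least the length of the corresponding cycle of $\Gamma(\sigma)$; to avoid inflating any part of $ct(\sigma)$ and to produce only parts equal to $1$ from the new vertices, each new exterior pair must be closed off as its own length-$2$ cycle, i.e.\ sent to a full interior pair $\rho(k')$ disjoint from $d'$.
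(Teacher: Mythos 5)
Your proof is correct and follows the route the paper itself intends: the paper's proof is just the one line ``Straightforward by induction,'' and your sequential choice of an unused target pair $\rho(k')$ together with one of the two bijections $\rho(k_i)\to\rho(k')$ is precisely that induction, carried out explicitly. Your careful justification that the coset-type condition forces each new exterior pair to be sent onto a full interior pair disjoint from $d'$ (so that the new vertices form only length-$2$ cycles and the old cycles are untouched) is exactly the point the paper leaves implicit, and it is argued correctly.
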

\begin{proof} Straightforward by induction.
\end{proof}
Consider $\mathcal{D}_n=\mathbb{C}[Q_n]$ the vector space with basis $Q_n$. We want to endow it with an algebra structure. Let $\alpha_1$ and $\alpha_2$ be two partial bijections. If $d_1=d'_2$, we can compose $\alpha_1$ and $\alpha_2$ and we define $\alpha_1\pr \alpha_2=\alpha_1\circ \alpha_2=(\sigma_1\circ \sigma_2,d_2,d'_1)$. Otherwise, we need to extend $\alpha_1$ and $\alpha_2$ to partial bijections $\widetilde{\alpha_1}$ and $\widetilde{\alpha_2}$ such that $\widetilde{d_1}=\widetilde{d'_2}$.
Since there exist several trivial extensions of $\alpha_1$ and $\alpha_2$, a natural choice is to take the average of the composition of all possible trivial extensions. Let $E_{\alpha_1}^{\alpha_2}(n)$ be the following set:
\begin{eqnarray*}
E_{\alpha_1}^{\alpha_2}(n)&:=&\lbrace(\widetilde{\alpha_1},\widetilde{\alpha_2})\in P_{\alpha_1}(n)\times P_{\alpha_2}(n) \text{ such that }\widetilde{d_1}=\widetilde{d_2'}=d_1\cup d_2'\rbrace.
\end{eqnarray*}
Elements of $E_{\alpha_1}^{\alpha_2}(n)$ are schematically represented on Figure \ref{fig:composition}.
\begin{figure}[htbp]
\begin{center}
\begin{tikzpicture}[thick,fill opacity=0.5]
\draw (0,0) ellipse (.5cm and .8cm)++(2,0) ellipse (.5cm and .8cm);
\draw[->] (0.5,0) to node [sloped,above] {$\sigma_2$} (1.5,0);
\draw[->,dashed] (0.5,-1)[] to node [sloped,above] {$\widetilde{\sigma_2}$} (1.5,-1);
\draw (2,-1) ellipse (.5cm and .8cm);
\draw (4,-1) ellipse (.5cm and .8cm);
\draw[->] (2.5,-1) to node [sloped,above] {$\sigma_1$} (3.5,-1);
\draw[->, dashed] (2.5,0)[] to node [sloped,above] {$\widetilde{\sigma_1}$} (3.5,0);
\draw[dashed] (-0.4,-0.5) arc (140:400: 0.5cm and 0.8cm);
\draw[dashed] (4.4,-0.4) arc (-40:220: 0.5cm and .8cm);
\draw (0,0) node {$d_2$};
\draw (2,0) node {$d_2'$};
\draw (2,-1) node {$d_1$};
\draw (4,-1) node {$d_1'$};
\end{tikzpicture}
\caption{Schematic representation of elements of $E_{\alpha_1}^{\alpha_2}(n)$.}
\label{fig:composition}
\end{center}
\end{figure}
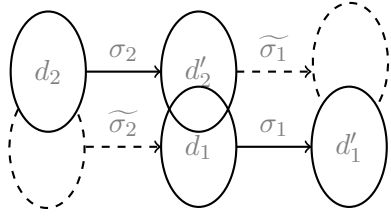 

$\textit{ Note on a convention for figure.}$ Throughout the paper, we will use the following conventions on figures that represent elements of some sets.
\begin{enumerate}
\item[-] The data defining the set (so fixed when we go from an element of the set to another) is drawn using plain shapes. 
\item[-] The element of the set is drawn using dashed shapes.
\end{enumerate}

\begin{ex}\label{Ex E}
To simplify, we will unify a partial bijection $\alpha$ with its bijection $\sigma$ using the two line notations. The first line will represent the set $d$ while the second will represent $d'$. Consider the two partial bijections $\alpha_1$ and $\alpha_2$ of $3$ given below,
$$\alpha_1=\begin{matrix}
1&2&5&6\\
3&2&1&4
\end{matrix}~~\text{ and }~~ \alpha_2=\begin{matrix}
3&4&5&6\\
5&6&3&4
\end{matrix}.$$
Then, $E_{\alpha_1}^{\alpha_2}(3)$ is the set of the following four elements. 
$$\left(\begin{matrix}
1&2&\bf{3}&\bf{4}&5&6\\
3&2&\bf{5}&\bf{6}&1&4
\end{matrix}~~,~~\begin{matrix}
\bf{1}&\bf{2}&3&4&5&6\\
\bf{1}&\bf{2}&5&6&3&4
\end{matrix}\right),\left(\begin{matrix}
1&2&\bf{3}&\bf{4}&5&6\\
3&2&\bf{6}&\bf{5}&1&4
\end{matrix}~~,~~\begin{matrix}
\bf{1}&\bf{2}&3&4&5&6\\
\bf{1}&\bf{2}&5&6&3&4
\end{matrix}\right),$$
$$\left(\begin{matrix}
1&2&\bf{3}&\bf{4}&5&6\\
3&2&\bf{5}&\bf{6}&1&4
\end{matrix}~~,~~\begin{matrix}
\bf{1}&\bf{2}&3&4&5&6\\
\bf{2}&\bf{1}&5&6&3&4
\end{matrix}\right)\text{ and }\left(\begin{matrix}
1&2&\bf{3}&\bf{4}&5&6\\
3&2&\bf{6}&\bf{5}&1&4
\end{matrix}~~,~~\begin{matrix}
\bf{1}&\bf{2}&3&4&5&6\\
\bf{2}&\bf{1}&5&6&3&4
\end{matrix}\right).$$
\end{ex}

We define the product of $\alpha_1$ and $\alpha_2$ as follows:
\begin{equation}\label{equation 3}
\alpha_1\pr \alpha_2:=\frac{1}{| E_{\alpha_1}^{\alpha_2}(n)|}\sum_{(\widetilde{\alpha_1},\widetilde{\alpha_2}) \in E_{\alpha_1}^{\alpha_2}(n)} \widetilde{\alpha_1}\circ \widetilde{\alpha_2}.
\end{equation}
By Lemma \ref{Lemma 3.1}, we have: 
\begin{equation}\label{cardinal de E}
| E_{\alpha_1}^{\alpha_2}(n)|=2^{\frac{| d'_2\setminus d_1|}{2}+\frac{| d_1\setminus d'_2|}{2}}.(n-\frac{| d'_1|}{2})_{(\frac{| d'_2\setminus d_1|}{2})}.(n-\frac{| d_2|}{2})_{(\frac{| d_1\setminus d'_2|}{2})}.
\end{equation}
\begin{prop}
The product $\pr$ is associative. In other words, $\mathcal{D}_n$ is a (non-unital) algebra.
\end{prop}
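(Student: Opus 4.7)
The plan is to prove associativity by a standard common-refinement argument: I will show that both $(\alpha_1\pr\alpha_2)\pr\alpha_3$ and $\alpha_1\pr(\alpha_2\pr\alpha_3)$ equal a single, manifestly parenthesis-symmetric, normalized sum of triple compositions.

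More precisely, I would introduce a set $T(\alpha_1,\alpha_2,\alpha_3)\subseteq P_{\alpha_1}(n)\times P_{\alpha_2}(n)\times P_{\alpha_3}(n)$ consisting of triples of trivial extensions satisfying the compatibility conditions $\widetilde d_1=\widetilde d_2'$ and $\widetilde d_2=\widetilde d_3'$, so that the composition $\widetilde\alpha_1\circ\widetilde\alpha_2\circ\widetilde\alpha_3$ is a well-defined partial bijection of $n$, together with a minimality condition preventing unnecessary extension. The target identity to establish is
\[
(\alpha_1\pr\alpha_2)\pr\alpha_3 \;=\; \frac{1}{|T(\alpha_1,\alpha_2,\alpha_3)|}\sum_{(\widetilde\alpha_1,\widetilde\alpha_2,\widetilde\alpha_3)\in T(\alpha_1,\alpha_2,\alpha_3)} \widetilde\alpha_1\circ\widetilde\alpha_2\circ\widetilde\alpha_3,
\]
together with its mirror for the right association; both sides then coincide because the right-hand side is symmetric in the way parentheses are placed.

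To establish this for the left association I would expand using equation \eqref{equation 3}: an inner sum over $(\widetilde\alpha_1,\widetilde\alpha_2)\in E_{\alpha_1}^{\alpha_2}(n)$ produces $\beta:=\widetilde\alpha_1\circ\widetilde\alpha_2$, and an outer sum over $(\widehat\beta,\widehat\alpha_3)\in E_\beta^{\alpha_3}(n)$ produces $\widehat\beta\circ\widehat\alpha_3$. The main combinatorial step is a factorization lemma: every trivial extension $\widehat\beta\in P_\beta(n)$ with the required codomain can be written as a composition $\widehat{\widetilde\alpha_1}\circ\widehat{\widetilde\alpha_2}$, where $\widehat{\widetilde\alpha_i}$ is a trivial extension of $\widetilde\alpha_i$, parametrized by the choice of an intermediate set of pairs $I\subseteq\mathbf{P}_n\setminus\widetilde d_1$ of the correct size, a pair-level matching of the newly added domain of $\widehat\beta$ with $I$, and an orientation inside each matched pair. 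Lemma \ref{Lemma 3.1} together with elementary counting gives an explicit factorization multiplicity that depends only on the sizes of $\widetilde d_2$, $d_3'$ and $\widetilde d_2\cap d_3'$ --- crucially, independent of which $\widehat\beta$ we started from. Inserting this factorization into the nested sum turns it into a triple sum indexed by $T(\alpha_1,\alpha_2,\alpha_3)$, and the normalization matches after combining \eqref{cardinal de E} for $|E_{\alpha_1}^{\alpha_2}(n)|$ and $|E_\beta^{\alpha_3}(n)|$ with the factorization multiplicity and collapsing falling factorials via $(m)_a(m-a)_b=(m)_{a+b}$; the product telescopes to $|T(\alpha_1,\alpha_2,\alpha_3)|$.

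The hard part will be the factorization lemma: one must verify both that every trivial extension of $\beta$ arises in this factored form and that the number of factorizations is the same constant regardless of which extension of $\beta$ is considered. This requires a careful accounting of how pairs of $\mathbf{P}_n$ are introduced in the domain, codomain and intermediate set, and of how orientations within each added pair propagate through composition. Once this count is pinned down, the remainder reduces to falling-factorial bookkeeping, and the computation for $\alpha_1\pr(\alpha_2\pr\alpha_3)$ runs in parallel, producing the same canonical expression thanks to the manifest symmetry of $T$ in its three arguments.
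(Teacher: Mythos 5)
Your overall strategy is the same as the paper's: rewrite both associations as sums over a common set of triples of trivial extensions (the paper's set $X$, your $T$), prove a factorization/fiber-counting lemma, and finish with falling-factorial bookkeeping. However, your target identity is not correct as stated, and the error occurs exactly at the step you dismiss as telescoping. The factorization multiplicity you describe depends on $|\widetilde{d_2}\cap d_3'|$, i.e.\ on the quantity $2f=|(\widetilde{d_2}\setminus d_2)\cap d_3'|$, and this quantity genuinely varies from one triple of $T$ to another: the pairs added to $d_2$ to form $\widetilde{d_2}=\widetilde{\sigma_2}^{-1}(d_1\cup d_2')$ may or may not land inside $d_3'$. (It is constant in the paper's small worked example only because there $d_1\cup d_2'=[2n]$ forces $\widetilde{d_2}=[2n]$; take $n=4$ with $d_1=d_2=d_2'=\{3,4\}$ and $d_3'=\{5,6\}$ to see both $f=0$ and $f=1$ occur.) Consequently, when you combine $|E_{\alpha_1}^{\alpha_2}(n)|$, $|E_{\beta}^{\alpha_3}(n)|$ and the factorization multiplicity, the product does \emph{not} collapse to a constant $|T(\alpha_1,\alpha_2,\alpha_3)|$: you obtain a weighted sum over $T$ whose weight depends on $f$ through factors such as $2^{-f}$ and $(n-b-c+e)_{(a-d-f)}$. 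The mirror computation produces a weight depending on the analogous overlap $2g=|(\widetilde{d_2'}\setminus d_2')\cap d_1|$. So neither side is the uniform average $\tfrac{1}{|T|}\sum$, and the appeal to the ``manifest symmetry'' of $T$ proves nothing, because the two normalizations are not manifestly equal.

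What is actually needed, and what the paper does, is to compare the two $f$- and $g$-dependent weight systems directly. This requires two additional inputs that are absent from your outline: first, the observation that for every element of the common set the cardinality constraint $|\delta_1|=|\delta_2|$ forces $f=g$; second, a handful of falling-factorial identities of the type $(n-b-c+e)_{(a-d-f)}(n-c)_{(b-e)}=(n-c)_{(a+b-d-e-f)}$ used to transform one weight into the other. With those two ingredients the two weighted sums over the common set coincide term by term, which is the correct conclusion. Your common-refinement set and factorization lemma are the right scaffolding, but the proof cannot end with a symmetry remark; the equality of the two non-constant weights is the actual content of the associativity proof.
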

\begin{proof}
Postponed to the next section.
\end{proof}
We will illustrate the associativity by a simple example given below. 
\begin{ex}
Let $\alpha_1$ and $\alpha_2$ be the two partial bijections of $6$ given in Example \ref{Ex E}, using the set $E_{\alpha_1}^{\alpha_2}(3)$, we have:
$$\alpha_1\pr \alpha_2=\frac{1}{4}\left[ \begin{matrix}
1&2&3&4&5&6\\
3&2&1&4&5&6
\end{matrix}~+~\begin{matrix}
1&2&3&4&5&6\\
3&2&1&4&6&5
\end{matrix}~+~\begin{matrix}
1&2&3&4&5&6\\
2&3&1&4&5&6
\end{matrix}~+~\begin{matrix}
1&2&3&4&5&6\\
2&3&1&4&6&5
\end{matrix}\right].$$
Take the partial bijection $\alpha_3$ of $6$ defined by: 
$$\alpha_3=\begin{matrix}
1&2&3&4\\
5&1&6&2
\end{matrix}.$$
In the same way we can verify that:
$$\alpha_2\pr \alpha_3=\frac{1}{4}\left[ \begin{matrix}
1&2&3&4&5&6\\
3&1&4&2&5&6
\end{matrix}~+~\begin{matrix}
1&2&3&4&5&6\\
3&1&4&2&6&5
\end{matrix}~+~\begin{matrix}
1&2&3&4&5&6\\
3&2&4&1&5&6
\end{matrix}~+~\begin{matrix}
1&2&3&4&5&6\\
3&2&4&1&6&5
\end{matrix}\right],$$
and that:
$$\alpha_1\pr (\alpha_2\pr \alpha_3)=(\alpha_1\pr \alpha_2)\pr \alpha_3=\frac{1}{8}\left[ \begin{matrix}
1&2&3&4&5&6\\
5&3&6&2&1&4
\end{matrix}~+~\begin{matrix}
1&2&3&4&5&6\\
5&3&6&2&4&1
\end{matrix}~+~\begin{matrix}
1&2&3&4&5&6\\
6&3&5&2&1&4
\end{matrix}\right]$$
$$+\frac{1}{8}\left[\begin{matrix}
1&2&3&4&5&6\\
6&3&5&2&4&1
\end{matrix}~+~\begin{matrix}
1&2&3&4&5&6\\
5&2&6&3&1&4
\end{matrix}~+~\begin{matrix}
1&2&3&4&5&6\\
5&2&6&3&4&1
\end{matrix}\right]$$
$$+\frac{1}{8}\left[\begin{matrix}
1&2&3&4&5&6\\
6&2&5&3&1&4
\end{matrix}~+~\begin{matrix}
1&2&3&4&5&6\\
6&2&5&3&4&1
\end{matrix}\right].$$
\end{ex}
\begin{prop}\label{proposition 3.2}
The following function extends to an algebra homomorphism between $\mathbb{C}[Q_n]$ and $\mathbb{C}[\mathcal{S}_{2n}].$
$$\begin{array}{ccccc}
\psi_n & : & \mathbb{C}[Q_n] & \longrightarrow & \mathbb{C}[\mathcal{S}_{2n}] \\
& & \alpha & \mapsto & \frac{1}{2^{n-\frac{|d|}{2}}(n-\frac{|d|}{2})!}\displaystyle{\sum_{\hat{\alpha}\in \mathcal{S}_{2n}\cap P_\alpha(n) }}\hat{\sigma}\\
\end{array}.$$
\end{prop}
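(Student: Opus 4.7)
My plan is to verify multiplicativity $\psi_n(\alpha_1 \pr \alpha_2) = \psi_n(\alpha_1)\psi_n(\alpha_2)$ on basis elements $\alpha_1, \alpha_2 \in Q_n$; by linearity this will suffice. Unfolding the definition of $\pr$ from equation (\ref{equation 3}), what I need to show is
\[
\frac{1}{|E_{\alpha_1}^{\alpha_2}(n)|}\sum_{(\widetilde{\alpha_1},\widetilde{\alpha_2})\in E_{\alpha_1}^{\alpha_2}(n)}\psi_n(\widetilde{\alpha_1}\circ\widetilde{\alpha_2}) = \psi_n(\alpha_1)\psi_n(\alpha_2).
\]
I would do this through two auxiliary statements that isolate the direct composition inside each summand from the averaging over $E_{\alpha_1}^{\alpha_2}(n)$.

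The first auxiliary statement is that direct composition commutes with $\psi_n$: whenever $\widetilde{d_1} = \widetilde{d'_2}$, so that $\gamma := \widetilde{\alpha_1}\circ\widetilde{\alpha_2}$ is a well-defined partial bijection, one has $\psi_n(\gamma) = \psi_n(\widetilde{\alpha_1})\psi_n(\widetilde{\alpha_2})$. I would fix a trivial extension $\hat{\gamma}\in\mathcal{S}_{2n}\cap P_\gamma(n)$ and show the pairs $(\hat{\alpha_1},\hat{\alpha_2})$ of trivial extensions of $\widetilde{\alpha_1},\widetilde{\alpha_2}$ in $\mathcal{S}_{2n}$ satisfying $\hat{\sigma_1}\hat{\sigma_2} = \hat{\gamma}$ are parameterized bijectively by $\hat{\sigma_2}$, via $\hat{\sigma_1} := \hat{\gamma}\hat{\sigma_2}^{-1}$. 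By Lemma \ref{Lemma 3.1} there are exactly $2^{n-m/2}(n-m/2)!$ such pairs, where $m := |\widetilde{d_1}|$, and this number matches the normalization in the definition of $\psi_n(\gamma)$. The point to check is that $\hat{\gamma}\hat{\sigma_2}^{-1}$ really is a trivial extension of $\widetilde{\alpha_1}$; this follows because $\hat{\sigma_2}^{-1}$ permutes $\rho(k)$-blocks on $[2n]\setminus\widetilde{d'_2}$ (being the inverse of a trivial extension of $\widetilde{\alpha_2}$) and $\hat{\gamma}$ permutes them on $[2n]\setminus\widetilde{d_2} = [2n]\setminus d_\gamma$, so the composition preserves the $\rho(k)$-structure on $[2n]\setminus\widetilde{d_1}$.

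The second auxiliary statement rewrites $\psi_n(\alpha_i)$ as an average indexed by a slice of $E_{\alpha_1}^{\alpha_2}(n)$. The independent conditions $\widetilde{d_1} = d_1\cup d'_2$ and $\widetilde{d'_2} = d_1\cup d'_2$ factor $E_{\alpha_1}^{\alpha_2}(n) = E_1\times E_2$. I claim $\psi_n(\alpha_i) = |E_i|^{-1}\sum_{\widetilde{\alpha_i}\in E_i}\psi_n(\widetilde{\alpha_i})$. Indeed, each full trivial extension $\hat{\alpha_i}\in \mathcal{S}_{2n}\cap P_{\alpha_i}(n)$ restricts uniquely to some $\widetilde{\alpha_i}\in E_i$ (take $\widetilde{\sigma_1} := \hat{\sigma_1}|_{d_1\cup d'_2}$, resp.\ $\widetilde{\sigma_2} := \hat{\sigma_2}|_{\hat{\sigma_2}^{-1}(d_1\cup d'_2)}$), and each $\widetilde{\alpha_i}\in E_i$ has $2^{n-m/2}(n-m/2)!$ full trivial extensions to $\mathcal{S}_{2n}$ by Lemma \ref{Lemma 3.1}; the count identity $|E_i|\cdot 2^{n-m/2}(n-m/2)! = 2^{n-|d_i|/2}(n-|d_i|/2)!$ also follows directly from Lemma \ref{Lemma 3.1}, which together produce the stated average.

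Putting the two pieces together,
\[
\psi_n(\alpha_1)\psi_n(\alpha_2) = \frac{1}{|E_1||E_2|}\sum_{E_1\times E_2}\psi_n(\widetilde{\alpha_1})\psi_n(\widetilde{\alpha_2}) = \frac{1}{|E_{\alpha_1}^{\alpha_2}(n)|}\sum_{E_{\alpha_1}^{\alpha_2}(n)}\psi_n(\widetilde{\alpha_1}\circ\widetilde{\alpha_2}) = \psi_n(\alpha_1\pr\alpha_2).
\]
The main obstacle is the repeated bookkeeping needed to show that restricted or composed maps remain valid trivial extensions in the sense of Definition \ref{definition 3.2}: that the new domains and codomains lie in $\mathbf{P}_n$ and that the coset-type gains only the required extra $1$'s. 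Both hinge on the basic fact that a trivial extension acts by $\rho(k)$-block permutations off the original domain; once this $\rho$-block tracking is carried out, all the enumerative identities collapse to Lemma \ref{Lemma 3.1}.
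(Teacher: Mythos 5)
Your proposal is correct and takes essentially the same route as the paper's own proof: both arguments reduce everything to the extension counts of Lemma \ref{Lemma 3.1} together with the observation that in a factorization $\widehat{\sigma_1}\circ\widehat{\sigma_2}=\omega$ one factor determines the other, the key verification in each case being that trivial extensions act by $\rho(k)$-block permutations off the original domain. You merely repackage the paper's single counting identity (its equation relating the double sum over full extensions of $\alpha_1,\alpha_2$ to the sum over $E_{\alpha_1}^{\alpha_2}(n)$) as two separate averaging lemmas, which changes the bookkeeping but not the substance.
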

\begin{proof}
Let $\alpha_1$ and $\alpha_2$ be two basis elements of $\mathbb{C}[Q_n]$. We refer to Figure \ref{fig:composition} and denote\linebreak $2b=|d_2|=|d'_2|, 2c=|d_1|=|d'_1|$ and $2e=|d'_2\cap d_1|$. We first prove that:
\begin{multline}\label{T}
\sum_{\widehat{\alpha_1}\in \mathcal{S}_{2n}\cap P_{\alpha_1}(n) } \quad
\sum_{\widehat{\alpha_2}\in \mathcal{S}_{2n}\cap P_{\alpha_2}(n) }
\widehat{\sigma_1}\circ \widehat{\sigma_2} \\
=2^{n-(b+c-e)}(n-(b+c-e))!\sum_{(\widetilde{\alpha_1},\widetilde{\alpha_2})\in E^{\alpha_1}_{\alpha_2}(n)} \quad \sum_{\widehat{\widetilde{\alpha_1}\circ \widetilde{\alpha_2}}\in \mathcal{S}_{2n}\cap P_{\widetilde{\alpha_1}\circ \widetilde{\alpha_2}}(n)}\widehat{\widetilde{\sigma_1}\circ \widetilde{\sigma_2}}.
\end{multline}
We fix $(\widetilde{\alpha_1},\widetilde{\alpha_2})\in E_{\alpha_1}^{\alpha_2}(n)$ and $\omega\in \mathcal{S}_{2n}\cap P_{\widetilde{\alpha_1}\circ \widetilde{\alpha_2}}(n)$, \textit{i.e.}: $$\omega_{|_{\widetilde{d_2}}}=\widetilde{\sigma_1}\circ \widetilde{\sigma_2} \text{ and } ct(\omega)=ct(\widetilde{\sigma_1}\circ\widetilde{\sigma_2})\cup (1^{(n-(b+c-e))}).$$
We look for the number of permutations $\widehat{\sigma_1}$ and $\widehat{\sigma_2}$ in $\mathcal{S}_{2n}\cap P_{\alpha_1}(n)$ and $\mathcal{S}_{2n}\cap P_{\alpha_2}(n)$ such that $\widehat{\sigma_1}\circ \widehat{\sigma_2}=\omega$. In this equation, $\widehat{\sigma_2}$ determines $\widehat{\sigma_1}$. But the condition $\omega_{|_{\widetilde{d_2}}}=\widetilde{\sigma_1}\circ \widetilde{\sigma_2}$ gives the values of $\widehat{\sigma_2}$ on $\widetilde{d_2}$ ($\widehat{\sigma_2}(x)=\sigma_2(x)$ if $x\in d_2$ and $\widehat{\sigma_2}(x)= \sigma_1^{-1}(\omega(x))$ if $x\in \widetilde{d_2}\setminus d_2$). Thus, the number of ways to choose $\widehat{\sigma_2}$ is the number of ways to extend trivially $\widetilde{\sigma_2}$ to a permutation of $2n$, which is $2^{n-(b+c-e)}(n-(b+c-e))!$ by Lemma \ref{Lemma 3.1}. This proves equation (\ref{T}).

Now we have:
\begin{multline}\label{N}
\psi_n(\alpha_1)\psi_n(\alpha_2)=\frac{1}{2^{2n-b-c}(n-c)!(n-b)!}\displaystyle{\sum_{\widehat{\alpha_1}\in \mathcal{S}_{2n}\cap P_{\alpha_1}(n)}}~\displaystyle{\sum_{\widehat{\alpha_2}\in \mathcal{S}_{2n}\cap P_{\alpha_2}(n)}}\widehat{\sigma_1}\circ \widehat{\sigma_2}\\
=\frac{(n-b-c+e)!}{2^{n-e}(n-c)!(n-b)!}\sum_{(\widetilde{\alpha_1},\widetilde{\alpha_2})\in E^{\alpha_1}_{\alpha_2}(n)}~\sum_{\widehat{\widetilde{\alpha_1}\circ \widetilde{\alpha_2}}\in \mathcal{S}_{2n}\cap P_{\widetilde{\alpha_1}\circ \widetilde{\alpha_2}}(n)}\widehat{\widetilde{\sigma_1}\circ \widetilde{\sigma_2}}.~~~~~~~~~~~
\end{multline}
On the other hand:
$$\psi_n(\alpha_1\pr \alpha_2)=\frac{1}{2^{b+c-2e}(n-c)_{(b-e)}(n-b)_{(c-e)}}\sum_{(\widetilde{\alpha_1},\widetilde{\alpha_2})\in E^{\alpha_1}_{\alpha_2}(n)}\psi_n\big((\widetilde{\sigma_1}\circ \widetilde{\sigma_2},\widetilde{d_2},\widetilde{d_1'})\big).$$
But
$$\psi_n\big((\widetilde{\sigma_1}\circ \widetilde{\sigma_2},\widetilde{d_2},\widetilde{d_1'})\big)=\frac{1}{2^{n-(b+c-e)}(n-(b+c-e))!}\sum_{\widehat{\widetilde{\alpha_1}\circ \widetilde{\alpha_2}}\in \mathcal{S}_{2n}\cap P_{\widetilde{\alpha_1}\circ \widetilde{\alpha_2}}(n)}\widehat{\widetilde{\sigma_1}\circ \widetilde{\sigma_2}}.$$
Thus,
\begin{equation}\label{M}
\psi_n(\alpha_1\pr \alpha_2)=\frac{(n-b-c+e))!}{2^{n-e}(n-c)!(n-b)!}\sum_{(\widetilde{\alpha_1},\widetilde{\alpha_2})\in E^{\alpha_1}_{\alpha_2}(n)}~~\sum_{\widehat{\widetilde{\alpha_1}\circ \widetilde{\alpha_2}}\in \mathcal{S}_{2n}\cap P_{\widetilde{\alpha_1}\circ \widetilde{\alpha_2}}(n)}\widehat{\widetilde{\sigma_1}\circ \widetilde{\sigma_2}}.
\end{equation}
Comparing equations (\ref{N}) and (\ref{M}), we see that for any two partial bijections $\alpha_1$ and $\alpha_2$ of $n$, we have $\psi_n(\alpha_1 \pr \alpha_2)=\psi_n(\alpha_1)\psi_n(\alpha_2)$. In other words, $\psi_n$ is a homomorphism of algebras.
\end{proof}
\subsection{Proof of the associativity of $\pr$}
Let $\alpha_1,\alpha_2$ and $\alpha_3$ be three elements of $Q_n$. By definition of the product we have:
\begin{equation*}
(\alpha_1\pr \alpha_2)\pr \alpha_3=
\frac{1}{| E_{\alpha_1}^{\alpha_2}(n)|}\sum_{(\widetilde{\alpha_1},\widetilde{\alpha_2})\in E_{\alpha_1}^{\alpha_2}(n)}\frac{1}{| E_{\widetilde{\alpha_1}\circ \widetilde{\alpha_2}}^{\alpha_3}(n)|}\sum_{(\widetilde{\widetilde{\alpha_1}\circ \widetilde{\alpha_2}},\widetilde{\alpha_3})\in E_{\widetilde{\alpha_1}\circ \widetilde{\alpha_2}}^{\alpha_3}(n)}(\widetilde{\widetilde{\sigma_1}\circ \widetilde{\sigma_2}}\circ \widetilde{\sigma_3}, \widetilde{d_3},\widetilde{\widetilde{d^{'}_1}}),
\end{equation*}
\begin{equation*} \alpha_1\pr (\alpha_2\pr \alpha_3)=
\frac{1}{| E_{\alpha_2}^{\alpha_3}(n)|}\sum_{(\widetilde{\alpha_2},\widetilde{\alpha_3})\in E_{\alpha_2}^{\alpha_3}(n)}\frac{1}{| E_{\alpha_1}^{\widetilde{\alpha_2}\circ \widetilde{\alpha_3}}(n)|}\sum_{(\widetilde{\alpha_1},\widetilde{\widetilde{\alpha_2}\circ \widetilde{\alpha_3}})\in E_{\alpha_1}^{\widetilde{\alpha_2}\circ \widetilde{\alpha_3}}(n)}(\widetilde{\sigma_1}\circ \widetilde{\widetilde{\sigma_2}\circ \widetilde{\sigma_3}}, \widetilde{\widetilde{d_3}},\widetilde{d^{'}_1}).
\end{equation*}
We consider now the following sets, indexing the double sums in the equations above:
$$X_1=\lbrace \big((\widetilde{\alpha_1},\widetilde{\alpha_2}),(\widetilde{\widetilde{\alpha_1}\circ \widetilde{\alpha_2}},\widetilde{\alpha_3})\big) \text{ such that }(\widetilde{\alpha_1},\widetilde{\alpha_2})\in E_{\alpha_1}^{\alpha_2}(n)\text{ and } (\widetilde{\widetilde{\alpha_1}\circ \widetilde{\alpha_2}},\widetilde{\alpha_3})\in E_{\widetilde{\alpha_1}\circ \widetilde{\alpha_2}}^{\alpha_3}(n)\rbrace,$$
and
$$X_2=\lbrace \big((\widetilde{\alpha_2},\widetilde{\alpha_3}),(\widetilde{\alpha_1},\widetilde{\widetilde{\alpha_2}\circ \widetilde{\alpha_3}})\big)\text{ such that } (\widetilde{\alpha_2},\widetilde{\alpha_3})\in E_{\alpha_2}^{\alpha_3}(n)\text{ and } (\widetilde{\alpha_1},\widetilde{\widetilde{\alpha_2}\circ \widetilde{\alpha_3}})\in E_{\alpha_1}^{\widetilde{\alpha_2}\circ \widetilde{\alpha_3}}(n)\rbrace.$$
With the notation $X_1$, the product $(\alpha_1\pr \alpha_2)\pr \alpha_3$ can be written as follows:
\begin{equation}\label{produit 1}
(\alpha_1\pr \alpha_2)\pr \alpha_3=
\sum_{\big((\widetilde{\alpha_1},\widetilde{\alpha_2}),(\widetilde{\widetilde{\alpha_1}\circ \widetilde{\alpha_2}},\widetilde{\alpha_3})\big)\in X_1}\frac{1}{| E_{\alpha_1}^{\alpha_2}(n)|\cdot| E_{\widetilde{\alpha_1}\circ \widetilde{\alpha_2}}^{\alpha_3}(n)|}(\widetilde{\widetilde{\sigma_1}\circ \widetilde{\sigma_2}}\circ \widetilde{\sigma_3}, \widetilde{d_3},\widetilde{\widetilde{d^{'}_1}}).
\end{equation}
Schematically, the elements in $X_1$ are represented on Figure \ref{fig:associativity 1}.
\begin{figure}[htbp]
\begin{center}
\begin{tikzpicture}[thick,fill opacity=0.5]
\draw (1,0) ellipse (.7cm and 1cm) ++ (3,0) ellipse (.7cm and 1cm);
\draw (4,-1.5) ellipse (.7cm and 1cm)  ++(3,0) ellipse (.7cm and 1cm);
\draw (7,-3) ellipse (.7cm and 1cm)  ++(3,0) ellipse (.7cm and 1cm);
\draw[dashed] (3.6,-2.3) arc (115:415: 0.72cm and 0.9cm);
\draw[dashed] (10.4,-2.1) arc (310:600: 0.72cm and 0.9cm);
\draw[->] (1.7,0) to node [sloped,above] {$\sigma_3$} (3.3,0);
\draw[->] (4.7,-1.5) to node [sloped,above] {$\sigma_2$} (6.3,-1.5);
\draw[->] (7.7,-3) to node [sloped,above] {$\sigma_1$} (9.2,-3);
\draw[->,dashed] (1.7,-2.2) to node [sloped,above] {$\widetilde{\sigma_3}$} (3.3,-2.2);
\draw[->,dashed] (4.7,-3) to node [sloped,above] {$\widetilde{\sigma_2}$} (6.3,-3);
\draw[->,dashed] (7.7,-1.5) to node [sloped,above] {$\widetilde{\sigma_1}$} (9.2,-1.5);
\draw[snake=brace] (11,-0.5) -- (11,-4) node[midway,right] {$\widetilde{d_1^{'}}$};
\draw[->,dashed] (4.7,0) to node [sloped,above] {$\widetilde{\widetilde{\sigma_1}\circ \widetilde{\sigma_2}}$} (9.2,0);
\draw[snake=brace, mirror snake] (3.2,-4.2) -- (4.5,-4.2) node[midway,below] {$\widetilde{d'_3}$};
\draw[dashed] (0.6,-0.75) arc (120:415: 0.72cm and 1.75cm);
\draw[snake=brace, mirror snake] (-0.2,1) -- (-0.2,-4) node[midway,left] {$\widetilde{d_3}$};
\draw[dashed] (10.4,-0.7) arc (310:600: 0.72cm and 0.9cm);
\draw[snake=brace] (12,1) -- (12,-4) node[midway,right] {$\widetilde{\widetilde{d_1^{'}}}$};
\draw (1,0) node  {$d_3$};
\draw (4,0) node  {$d_3^{'}$};
\draw (4,-1.5) node  {$d_2$};
\draw (7,-1.5) node  {$d_2^{'}$};
\draw (7,-3) node  {$d_1$};
\draw (10,-3) node  {$d_1^{'}$};
\end{tikzpicture}
\caption{Schematic representation of elements in $X_1$.}
\label{fig:associativity 1}
\end{center}
\end{figure}\\
In the same way, with the notation $X_2$, the product $\alpha_1\pr (\alpha_2\pr \alpha_3)$ can be written in the following way:
\begin{equation}\label{produit 2}
\alpha_1\pr (\alpha_2\pr \alpha_3)=
\sum_{\big((\widetilde{\alpha_2},\widetilde{\alpha_3}),(\widetilde{\alpha_1},\widetilde{\widetilde{\alpha_2}\circ \widetilde{\alpha_3}})\big) \in X_2}\frac{1}{| E_{\alpha_2}^{\alpha_3}(n)|\cdot| E_{\alpha_1}^{\widetilde{\alpha_2}\circ \widetilde{\alpha_3}}(n)|}(\widetilde{\sigma_1}\circ \widetilde{\widetilde{\sigma_2}\circ \widetilde{\sigma_3}}, \widetilde{\widetilde{d_3}},\widetilde{d^{'}_1}).
\end{equation}
Schematically, the elements in $X_2$ are represented on Figure \ref{fig:associativity 2}.
\begin{figure}[htbp]
\begin{center}
\begin{tikzpicture}[thick,fill opacity=0.5]
\draw (1,0) ellipse (.7cm and 1cm) ++ (3,0) ellipse (.7cm and 1cm);
\draw (4,-1.5) ellipse (.7cm and 1cm)  ++(3,0) ellipse (.7cm and 1cm);
\draw (7,-3) ellipse (.7cm and 1cm)  ++(3,0) ellipse (.7cm and 1cm);
\draw[dashed] (0.6,-0.8) arc (120:415: 0.70cm and 0.9cm);
\draw[dashed] (7.4,-0.7) arc (310:600: 0.72cm and 0.9cm);
\draw[->] (1.7,0) to node [sloped,above] {$\sigma_3$} (3.3,0);
\draw[->] (4.7,-1.5) to node [sloped,above] {$\sigma_2$} (6.3,-1.5);
\draw[->] (7.7,-3) to node [sloped,above] {$\sigma_1$} (9.3,-3);
\draw[->,dashed] (1.7,-1.5) to node [sloped,above] {$\widetilde{\sigma_3}$} (3.3,-1.5);
\draw[->,dashed] (4.7,0) to node [sloped,above] {$\widetilde{\sigma_2}$} (6.2,0);
\draw[->,dashed] (7.7,-0.8) to node [sloped,above] {$\widetilde{\sigma_1}$} (9.2,-0.8);
\draw[->,dashed] (1.7,-3) to node [sloped,below] {$\widetilde{\widetilde{\sigma_2}\circ\widetilde{\sigma_3}}$} (6.2,-3);
\draw[snake=brace] (11,1) -- (11,-4) node[midway,right] {$\widetilde{d_1^{'}}$};
\draw[snake=brace, snake] (6.2,1) -- (7.6,1) node[midway,above] {$\widetilde{d_1}$};
\draw[dashed] (0.6,-2.3) arc (115:415: 0.70cm and 0.9cm);
\draw[snake=brace, mirror snake] (0.1,1) -- (0.1,-2.5) node[midway,left] {$\widetilde{d_3}$};
\draw[snake=brace, mirror snake] (-0.5,1) -- (-0.5,-4) node[midway,left] {$\widetilde{\widetilde{d_3}}$};
\draw[dashed] (10.35,-2.1) arc (310:600: 0.72cm and 1.8cm);
\draw (1,0) node  {$d_3$};
\draw (4,0) node  {$d_3^{'}$};
\draw (4,-1.5) node  {$d_2$};
\draw (7,-1.5) node  {$d_2^{'}$};
\draw (7,-3) node  {$d_1$};
\draw (10,-3) node  {$d_1^{'}$};
\end{tikzpicture}
\caption{Schematic representation of elements of $X_2$.}
\label{fig:associativity 2}
\end{center}
\end{figure}

To prove the associativity of the product, we build a set $X$ and two surjective functions $\phi_1:X\longrightarrow X_1$ and $\phi_2:X\longrightarrow X_2$ in order to write both sums (equations \eqref{produit 1} and \eqref{produit 2}) as sums over the same set $X$. Let $X$ be the set of elements 
$$\big(\epsilon_1=(\tau_1,\delta_0,\delta_1),\epsilon_2=(\tau_2,\delta_1,\delta_2),\epsilon_3=(\tau_3,\delta_2,\delta_3)\big)\in P_{\alpha_1}(n)\times P_{\alpha_2}(n)\times P_{\alpha_3}(n),$$ satisfying the following properties:
\begin{enumerate}[label=\roman*)]
\item $d_3^{'}\cup d_2\subseteq \delta_2$.
\item $d_2^{'}\cup d_1\subseteq \delta_1$.
\item \label{delta2} $\delta_{2}=d'_{3}\cup \tau_{2}^{-1}(d_1\cup d'_2)$.
\end{enumerate}
The elements of this set are schematically represented on Figure \ref{fig:associativity 3}. Note that \eqref{delta2} is a minimality condition. We will see in the proof of Lemma \ref{surjection de phi} below why it is useful.
\begin{figure}[htbp]
\begin{center}
\begin{tikzpicture}[thick,fill opacity=0.5]
\draw (1,0) ellipse (.7cm and 1cm) ++ (3,0) ellipse (.7cm and 1cm);
\draw (4,-1.5) ellipse (.7cm and 1cm)  ++(3,0) ellipse (.7cm and 1cm);
\draw (7,-3) ellipse (.7cm and 1cm)  ++(3,0) ellipse (.7cm and 1cm);
\draw[dashed] (7.4,-0.7) arc (310:600: 0.72cm and 0.9cm);
\draw[dashed] (3.6,-2.3) arc (115:415: 0.72cm and 0.9cm);
\draw[dashed] (0.6,-0.75) arc (120:415: 0.72cm and 1.75cm);
\draw[dashed] (10.35,-2.1) arc (310:600: 0.72cm and 1.8cm);
\draw[->] (1.7,0) to node [sloped,above] {$\tau_{3{_|{_{d_3}}}}$$=$$\sigma_3$} (3.3,0);
\draw[->] (4.7,-1.5) to node [sloped,above] {$\tau_{2{_|{_{d_2}}}}$$=$$\sigma_2$} (6.3,-1.5);
\draw[->] (7.7,-3) to node [sloped,above] {$\tau_{1{_|{_{d_1}}}}$$=$$\sigma_1$} (9.3,-3);
\draw[snake=brace, mirror snake] (9.3,-4.2) -- (10.7,-4.2) node[midway,below] {$\delta_0$};
\draw[snake=brace, mirror snake] (0.3,-4.2) -- (1.7,-4.2) node[midway,below] {$\delta_3$};
\draw[snake=brace, mirror snake] (3.3,-4.2) -- (4.7,-4.2) node[midway,below] {$\delta_2$};
\draw[snake=brace, mirror snake] (6.3,-4.2) -- (7.7,-4.2) node[midway,below] {$\delta_1$};
\draw[->, dashed] (1.5,-4.4) to node [sloped,below] {$\tau_{3}$} (3.5,-4.4);
\draw[->, dashed] (4.5,-4.4) to node [sloped,below] {$\tau_{2}$} (6.5,-4.4);
\draw[->, dashed] (7.5,-4.4) to node [sloped,below] {$\tau_{1}$} (9.5,-4.4);
\draw (1,0) node  {$d_3$};
\draw (4,0) node  {$d'_3$};
\draw (4,-1.5) node  {$d_2$};
\draw (7,-1.5) node  {$d'_2$};
\draw (7,-3) node  {$d_1$};
\draw (10,-3) node  {$d'_1$};
\end{tikzpicture}
\caption{Schematic representation of elements of $X$.}
\label{fig:associativity 3}
\end{center}
\end{figure}

We define the maps $\phi_1:X\longrightarrow X_1$ and $\phi_2:X\longrightarrow X_2$ as follows: 
\begin{eqnarray*}
\phi_1(\epsilon_1,\epsilon_2,\epsilon_3)=
\big((\epsilon_{1_{|_{d_1\cup d'_2}}},\epsilon_{2_{|_{\tau_2^{-1}(d_1\cup d'_2)}}}),(\epsilon_1\circ \epsilon_2,\epsilon_3)\big),
\end{eqnarray*}
\begin{eqnarray*}
\phi_2(\epsilon_1,\epsilon_2,\epsilon_3)=
\big((\epsilon_{2_{|_{d_2\cup d'_3}}},\epsilon_{3_{|_{\tau_3^{-1}(d_2\cup d'_3)}}}),(\epsilon_2\circ \epsilon_3,\epsilon_1)\big).
\end{eqnarray*}
Informally, the map $\phi_1$ (resp. $\phi_2$) forgets the dashed ellipse shape at the top (resp. bottom) of the third (resp. second) column of Figure \ref{fig:associativity 3}. We denote by $2a=|d_3|=|d'_3|, 2b=|d_2|=|d'_2|, 2c=|d_1|=|d'_1|, 2d=|d^{'}_3\cap d_2|$ and $2e=|d^{'}_2\cap d_1|$. We prove the following lemma:
\begin{lem}\label{surjection de phi}
The map $\phi_1$ is well defined and surjective. For any element $\big((\widetilde{\alpha_1},\widetilde{\alpha_2}),(\widetilde{\widetilde{\alpha_1}\circ \widetilde{\alpha_2}},\widetilde{\alpha_3})\big)\in X_1$, we have:
$$|\phi_1^{-1}\Big(\big((\widetilde{\alpha_1},\widetilde{\alpha_2}),(\widetilde{\widetilde{\alpha_1}\circ \widetilde{\alpha_2}},\widetilde{\alpha_3})\big)\Big)|=
2^{a-d-f}\cdot(n-b-c+e)_{(a-d-f)},$$
where $2f=|(\widetilde{d_2}\setminus d_2)\cap d^{'}_3|$.\\
Note that $f$, unlike $a,b,c,d,e$, depends on the element of $X_1$ that we consider.
\end{lem}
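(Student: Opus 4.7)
The plan is to analyze the fiber of $\phi_1$ over a fixed element $\big((\widetilde{\alpha_1},\widetilde{\alpha_2}),(\widetilde{\beta},\widetilde{\alpha_3})\big)\in X_1$, writing $\widetilde{\beta}$ for $\widetilde{\widetilde{\alpha_1}\circ \widetilde{\alpha_2}}$. Well-definedness of $\phi_1$ is a direct verification: conditions (ii) and (iii) ensure that $\epsilon_1|_{d_1\cup d'_2}$ and $\epsilon_2|_{\tau_2^{-1}(d_1\cup d'_2)}$ are trivial extensions of $\alpha_1,\alpha_2$ sharing the set $d_1\cup d'_2$ as domain/codomain (so they lie in $E_{\alpha_1}^{\alpha_2}(n)$), and $\epsilon_1\circ\epsilon_2$ is a trivial extension of the composition whose domain $\delta_2=d'_3\cup\tau_2^{-1}(d_1\cup d'_2)$ matches exactly the $E_{\widetilde{\alpha_1}\circ\widetilde{\alpha_2}}^{\alpha_3}(n)$-condition $\widetilde{d_3'}=d_3'\cup \widetilde{d_2}$.

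Next I identify what is forced in a fiber. Reading off the third component gives $\epsilon_3=\widetilde{\alpha_3}$, hence $\delta_2=\widetilde{d'_3}=d'_3\cup\widetilde{d_2}$; together with (iii) this forces $\tau_2^{-1}(d_1\cup d'_2)=\widetilde{d_2}$. The restriction conditions then force $\tau_2=\widetilde{\sigma_2}$ on $\widetilde{d_2}$ and $\tau_1=\widetilde{\sigma_1}$ on $d_1\cup d'_2$. On $A:=d'_3\setminus\widetilde{d_2}$, the equation $\epsilon_1\circ\epsilon_2=\widetilde{\beta}$ reads $\tau_1\circ\tau_2=\widetilde{\beta}$. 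Since $\tau_2$ is a bijection $\delta_2\to\delta_1$ and (iii) forces $\tau_2(A)\cap(d_1\cup d'_2)=\emptyset$, we obtain $\delta_1=(d_1\cup d'_2)\sqcup\tau_2(A)$, and then $\tau_1$ on $\tau_2(A)$ is determined by $\widetilde{\beta}\circ\tau_2^{-1}$. Thus the only free data in the fiber is the bijection $\tau_2|_A$.

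The main step is to count the admissible choices of $\tau_2|_A$. The trivial-extension condition on $\epsilon_2$ requires pairs $\rho(k)\subseteq\delta_2\setminus d_2$ to map to pairs in $\delta_1\setminus d'_2$; since the action on $\widetilde{d_2}\setminus d_2$ is already fixed by $\widetilde{\alpha_2}$, the remaining constraint is that the $a-d-f$ pairs of $A$ map to distinct pairs of $[2n]\setminus(d_1\cup d'_2)$, of which there are $n-(b+c-e)$. Choosing an ordered injection of pairs ($(n-b-c+e)_{a-d-f}$ ways) and, for each, one of the two orientations on $\rho(k)$ ($2^{a-d-f}$ ways) gives the announced count
\[
2^{a-d-f}\,(n-b-c+e)_{a-d-f}.
\]

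To conclude, I would check that each such choice genuinely produces an element of $X$: (i),(ii),(iii) hold by construction, and $\epsilon_1$ is a valid trivial extension of $\alpha_1$ because $\tau_1|_{\tau_2(A)}=\widetilde{\beta}\circ\tau_2^{-1}$ is a composition of two trivial pair-matchings (both $\tau_2|_A$ and $\widetilde{\beta}|_A=\widetilde{\beta}|_{\widetilde{d_\beta}\setminus\widetilde{d_2}}$ are trivial by construction), hence itself a trivial pair-matching. Surjectivity of $\phi_1$ then follows because the count is strictly positive precisely when $X_1\neq\emptyset$, i.e. when $n\geq a+b+c-d-e-f$. I expect the main obstacle to be bookkeeping: juggling the several trivial-extension and minimality conditions simultaneously. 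Reasoning pair-by-pair rather than element-by-element on the sets $A$, $\tau_2(A)$ and $\widetilde{\beta}(A)$ keeps the argument clean.
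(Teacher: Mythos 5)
Your proof is correct and takes essentially the same approach as the paper: fix an element of $X_1$, observe that $\epsilon_3$, $\delta_0$, $\delta_2$, $\delta_3$ and the restrictions of $\tau_1,\tau_2$ to $d_1\cup d_2'$ and $\widetilde{d_2}$ are all forced, and count the remaining freedom as a trivial pair-to-pair injection, giving $2^{a-d-f}(n-b-c+e)_{(a-d-f)}$. The only cosmetic difference is that you parametrize the fiber by $\tau_2|_A$, whereas the paper first chooses the set $\delta_1$ (in $\binom{n-(b+c-e)}{a-d-f}$ ways) and then the trivial extension of $\tau_1$ (in $2^{a-d-f}(a-d-f)!$ ways), with $\tau_2$ then determined --- the same count.
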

\begin{proof}
For any $(\epsilon_1,\epsilon_2,\epsilon_3)\in X$, we must check that $\phi_1\big((\epsilon_1,\epsilon_2,\epsilon_3)\big)$ is in $X_1$. 
Set $$(\widetilde{\alpha_1},\widetilde{\alpha_2})=(\epsilon_{1_{|_{d_1\cup d'_2}}},\epsilon_{2_{|_{\tau_2^{-1}(d_1\cup d'_2)}}}).$$

First $(\widetilde{\alpha_1},\widetilde{\alpha_2})$ is in $P_{\alpha_1}(n)\times P_{\alpha_2}(n)$ since $(\epsilon_1,\epsilon_2)$ is in $P_{\alpha_1}(n)\times P_{\alpha_2}(n)$. Then, the codomain of $\widetilde{\alpha_2}$ and the domain of $\widetilde{\alpha_1}$ are $d_1\cup d'_2$. Thus, $(\widetilde{\alpha_1},\widetilde{\alpha_2})$ is in $E_{\alpha_1}^{\alpha_2}(n)$. 

Second, it is easy to check that $(\epsilon_1\circ \epsilon_2,\epsilon_3)$ is in $P_{\widetilde{\alpha_1}\circ \widetilde{\alpha_2}}(n)\times P_{\alpha_3}(n)$. Then, to be in $E_{\widetilde{\alpha_1}\circ \widetilde{\alpha_2}}^{\alpha_3}(n)$, $(\epsilon_1\circ \epsilon_2,\epsilon_3)$ must verify the condition that $\delta_{2}=d'_{3}\cup \tau_{2}^{-1}(d_1\cup d'_2)$, which is given by condition \eqref{delta2}. 

Thus, $\phi_1$ is well defined.\medskip

Now, fix $\big((\widetilde{\alpha_1},\widetilde{\alpha_2}),(\widetilde{\widetilde{\alpha_1}\circ \widetilde{\alpha_2}},\widetilde{\alpha_3})\big)\in X_1$.
We will count the number of its pre-images by $\phi_1$.
 To construct an element of $\phi_1^{-1}\Big(\big((\widetilde{\alpha_1},\widetilde{\alpha_2}),(\widetilde{\widetilde{\alpha_1}\circ \widetilde{\alpha_2}},\widetilde{\alpha_3})\big)\Big)$, we have only to build $\epsilon_1, \epsilon_2$ and $\delta_1$ since the other elements $\epsilon_3,\delta_0,\delta_2$ and $\delta_3$ are determined by $\widetilde{\alpha_1}, \widetilde{\alpha_2}, \widetilde{\alpha_3}$ and $\widetilde{\widetilde{\alpha_1}\circ \widetilde{\alpha_2}}$. First, to build $\delta_1$, we must extend  $d_1\cup d^{'}_2$ by adding pairs of form $\rho(k)$ to obtain a set which has the same cardinality as $\delta_2=\widetilde{d_2}\cup d_3^{'}$ which is $2a+2b+2c-2e-2d-2f$. We have $|d_1\cup d^{'}_2|=2b+2c-2e$, so the number of possible ways to extend $d_1\cup d^{'}_2$ is the number of choices of $2a+2b+2c-2e-2d-2f-(2b+2c-2e)=2a-2d-2f$ elements among $2n-(2b+2c-2e)$. Since our choice must respect the condition that the extended set is in $\mathbf{P}_n$, this number is:
$$\begin{pmatrix}
n-(b+c-e)\\
a-d-f
\end{pmatrix}.$$ 
Once the set $d_1\cup d^{'}_2$ is extended, we should extend $\widetilde{\sigma_1}$ to $\tau_1$ (we have the definition domain $\delta_0$ and the arrival domain $\delta_1$) by sending the pairs of form $\rho(k)$ to pairs with same form. The number of ways to do so is:
$$2^{a-d-f}\cdot(a-d-f)!.$$ 
After extending $\widetilde{\sigma_1}$ to $\tau_1$, we get immediately $\tau_2$ because $\tau_1\circ\tau_2=\widetilde{\widetilde{\sigma_1}\circ \widetilde{\sigma_2}}$ is given. Thus, the cardinality of the set $\phi_1^{-1}\Big(\big((\widetilde{\alpha_1},\widetilde{\alpha_2}),(\widetilde{\widetilde{\alpha_1}\circ \widetilde{\alpha_2}},\widetilde{\alpha_3})\big)\Big)$ is equal to:\\
$$\begin{pmatrix}
n-(b+c-e)\\
a-d-f
\end{pmatrix}\cdot2^{a-d-f}\cdot(a-d-f)!=
2^{a-d-f}\cdot(n-b-c+e)_{(a-d-f)}.
$$
\end{proof}
In the same way, we can prove the following lemma.
\begin{lem}
The map $\phi_2$ is well defined and surjective. For any element $\big((\widetilde{\alpha_2},\widetilde{\alpha_3}),(\widetilde{\alpha_1},\widetilde{\widetilde{\alpha_2}\circ \widetilde{\alpha_3}})\big)\in X_2$, we have:
$$|\phi_2^{-1}\Big(\big((\widetilde{\alpha_2},\widetilde{\alpha_3}),(\widetilde{\alpha_1},\widetilde{\widetilde{\alpha_2}\circ \widetilde{\alpha_3}})\big)\Big)|=
2^{c-e-g}\cdot(n-a-b+d)_{(c-e-g)},$$
where, $2g=|(\widetilde{d^{'}_{2}}\setminus d^{'}_{2})\cap d_{1}|$.
\end{lem}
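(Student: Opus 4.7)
My proof plan mirrors the argument for Lemma \ref{surjection de phi}, with the roles of $\alpha_1$ and $\alpha_3$ interchanged. Explicitly, the parameters $(a,d,f)$ and $(c,e,g)$ swap roles while $b$ is preserved, which is exactly the symmetry between the two configurations in Figures \ref{fig:associativity 1} and \ref{fig:associativity 2} (they are vertical reflections of each other). So the plan is to run the two usual steps — well-definedness, then counting preimages — taking care of the one mild asymmetry in the definition of $X$ (only condition (iii) is one-sided).

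For well-definedness, I fix $(\epsilon_1,\epsilon_2,\epsilon_3)\in X$ and check that $\phi_2(\epsilon_1,\epsilon_2,\epsilon_3)\in X_2$. The pair $(\epsilon_{2|_{d_2\cup d_3'}},\epsilon_{3|_{\tau_3^{-1}(d_2\cup d_3')}})$ lies in $E_{\alpha_2}^{\alpha_3}(n)$ because restrictions of trivial extensions are again trivial extensions (here one uses that $d_2\cup d_3'\subseteq \delta_2$, which is condition (i)), and because by construction the common middle set is $d_2\cup d_3'$. For the second pair $(\epsilon_2\circ\epsilon_3,\epsilon_1)$ to belong to $E_{\alpha_1}^{\widetilde{\alpha_2}\circ\widetilde{\alpha_3}}(n)$, I check that the codomain of $\epsilon_2\circ\epsilon_3$, namely $\delta_1$, equals $d_1\cup \widetilde{d_2'}$. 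Since $\widetilde{d_2'}=\tau_2(d_2\cup d_3')=d_2'\cup\tau_2(d_3'\setminus d_2)$, this inclusion follows from condition (ii) combined with $\tau_2(d_3'\setminus d_2)\subseteq \delta_1$, and equality is forced by the dimension balance in any trivial extension.

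For the count, fix $\bigl((\widetilde{\alpha_2},\widetilde{\alpha_3}),(\widetilde{\alpha_1},\widetilde{\widetilde{\alpha_2}\circ\widetilde{\alpha_3}})\bigr)\in X_2$. In any preimage $(\epsilon_1,\epsilon_2,\epsilon_3)$ one has $\epsilon_1=\widetilde{\alpha_1}$, so $\delta_0$ and $\delta_1$ are fixed; moreover $\delta_2=d_3'\cup\widetilde{\sigma_2}^{-1}(d_1\cup d_2')$ is determined by condition (iii), since the right-hand side only depends on $\widetilde{\sigma_2}=\tau_2|_{d_2\cup d_3'}$, which is fixed. Once $\epsilon_3$ is chosen, the identity $\epsilon_2\circ\epsilon_3=\widetilde{\widetilde{\alpha_2}\circ\widetilde{\alpha_3}}$ determines $\epsilon_2$. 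The only freedom is therefore in extending $\widetilde{\alpha_3}$, whose domain $\widetilde{d_3}$ has size $2(a+b-d)$, to $\epsilon_3$ with domain $\delta_3$ of size $|\delta_2|=2(a+b+c-d-e-g)$. Following exactly the two-stage count of Lemma \ref{surjection de phi}, one first selects the $c-e-g$ additional pairs $\rho(k)$ to enlarge the domain, which gives $\binom{n-a-b+d}{c-e-g}$ possibilities, and then pairs them in the codomain by a trivial extension, contributing $2^{c-e-g}(c-e-g)!$ further choices. The product is $2^{c-e-g}\,(n-a-b+d)_{(c-e-g)}$, and surjectivity follows since the count is strictly positive.

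The main obstacle I anticipate is the bookkeeping of sizes, especially verifying the identity $|\delta_2|=2(a+b+c-d-e-g)$ in terms of the parameters $a,b,c,d,e$ together with the element-dependent $g$. Once this is done by a short inclusion–exclusion inside $d_2\cup d_3'$, the remainder of the argument is a direct transposition of the counting in Lemma \ref{surjection de phi} under the swap $(a,d,f)\leftrightarrow(c,e,g)$.
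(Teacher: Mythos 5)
Your overall plan---transposing the proof of Lemma \ref{surjection de phi} under the swap $(a,d,f)\leftrightarrow(c,e,g)$---is exactly what the paper intends (it gives no separate argument, only ``in the same way''), and your final formula is the right one. But the counting step contains a genuine error. You claim that $\delta_2=d'_3\cup\widetilde{\sigma_2}^{-1}(d_1\cup d'_2)$ is determined because the right-hand side ``only depends on $\widetilde{\sigma_2}=\tau_2|_{d_2\cup d'_3}$.'' This is false: condition \eqref{delta2} involves $\tau_2^{-1}$, not $\widetilde{\sigma_2}^{-1}$, and $d_1\cup d'_2$ is not contained in the codomain $\widetilde{d'_2}=\tau_2(d_2\cup d'_3)$ of $\widetilde{\sigma_2}$. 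On the part $d_1\setminus\widetilde{d'_2}$, which has cardinality $2(c-e-g)$, the set $\tau_2^{-1}(d_1\cup d'_2)$ depends on how $\tau_2$ extends beyond $d_2\cup d'_3$; indeed $d'_3\cup\widetilde{\sigma_2}^{-1}\bigl((d_1\cup d'_2)\cap\widetilde{d'_2}\bigr)\subseteq d_2\cup d'_3$ has size $2(a+b-d)$, strictly smaller than $|\delta_2|=2(a+b+c-d-e-g)$ as soon as $c>e+g$. So $\delta_2$ is precisely the set that varies over the fibre. Your write-up is then internally inconsistent: if $\delta_2$ were determined, the only remaining freedom would be the pair-respecting bijection $\tau_3|_{\delta_3\setminus\widetilde{d_3}}$ onto $\delta_2\setminus(d_2\cup d'_3)$, giving $2^{c-e-g}(c-e-g)!$ preimages with no binomial factor. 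Conversely, the set you treat as free---the domain $\delta_3$ of $\epsilon_3$---is pinned down: it is the domain of the prescribed $\widetilde{\widetilde{\alpha_2}\circ\widetilde{\alpha_3}}$, so enlarging it at will produces triples that do not map to the given element of $X_2$.

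The correct transposition is: $\epsilon_1$ (hence $\delta_0$, $\delta_1$, $\tau_1$), $\delta_3$, the composite $\tau_2\circ\tau_3$, and the restriction $\tau_3|_{\widetilde{d_3}}=\widetilde{\sigma_3}$ are all determined by the fixed element of $X_2$. The free data are the set $\delta_2\setminus(d_2\cup d'_3)$, a union of $c-e-g$ pairs $\rho(k)$ chosen among the $n-(a+b-d)$ pairs disjoint from $d_2\cup d'_3$ (whence $\binom{n-a-b+d}{c-e-g}$), together with a pair-respecting bijection from the determined set $\delta_3\setminus\widetilde{d_3}$ onto it (whence $2^{c-e-g}(c-e-g)!$); then $\tau_2$ is recovered from $\tau_2\circ\tau_3$. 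With this correction your count and the surjectivity conclusion stand. A smaller point: in the well-definedness check, the equality $\delta_1=d_1\cup\widetilde{d'_2}$ is not forced by ``dimension balance'' alone; one must apply $\tau_2$ to condition \eqref{delta2} to get $\delta_1=\tau_2(\delta_2)=d_1\cup d'_2\cup\tau_2(d'_3)=d_1\cup\widetilde{d'_2}$, and without \eqref{delta2} the set $\delta_1$ could be strictly larger.
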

We can also verify using the notations above that:
 $$| E_{\alpha_1}^{\alpha_2}(n)|=2^{b+c-2e}\cdot(n-c)_{(b-e)}\cdot(n-b)_{(c-e)},$$
 $$| E_{\alpha_2}^{\alpha_3}(n)|=2^{a+b-2d}\cdot(n-b)_{(a-d)}\cdot(n-a)_{(b-d)},$$
and
$$| E_{\widetilde{\alpha_1}\circ \widetilde{\alpha_2}}^{\alpha_3}(n)|=2^{a+b+c-2d-e-2f}\cdot(n-b-c+e)_{(a-d-f)}\cdot(n-a)_{(b+c-d-e-f)},$$
$$| E_{\alpha_1}^{\widetilde{\alpha_2}\circ \widetilde{\alpha_3}}(n)|=2^{a+b+c-d-2e-2g}\cdot(n-a-b+d)_{(c-e-g)}\cdot(n-c)_{(a+b-d-e-g)}.$$
The products $(\alpha_1\pr \alpha_2)\pr \alpha_3$ and $\alpha_1\pr (\alpha_2\pr \alpha_3)$ given in equations \eqref{produit 1} and \eqref{produit 2} as sums over $X_1$ and $X_2$ can be written as sums over the set $X$ as follows:
\begin{eqnarray*}
(\alpha_1\pr \alpha_2)\pr \alpha_3 =
\sum_{(\epsilon_1,\epsilon_2,\epsilon_3)\in X}&&\frac{1}{2^{2a+2b+2c-3d-3e-3f}}\cdot ((n-b-c+e)_{(a-d-f)})^2\\
&&\cdot(n-a)_{(b+c-d-e-f)}\cdot(n-c)_{(b-e)}\cdot(n-b)_{(c-e)}\\
&&\epsilon_1\circ\epsilon_2\circ\epsilon_3,
\end{eqnarray*} 
and
\begin{eqnarray}\label{second product}
\alpha_1\pr (\alpha_2\pr\alpha_3)=
\sum_{(\epsilon_1,\epsilon_2,\epsilon_3)\in X}&&\frac{1}{2^{2a+2b+2c-3d-3e-3g}}\cdot ((n-a-b+d)_{(c-e-g)})^2\\
&&\cdot (n-c)_{(a+b-d-e-g)}\cdot(n-a)_{(b-d)}\cdot (n-b)_{(a-d)} \nonumber \\
&&\epsilon_1\circ\epsilon_2\circ\epsilon_3. \nonumber
\end{eqnarray} 
For any positive integer $n$, we have the following easy identities:
\begin{eqnarray*}
(n-b-c+e)_{(a-d-f)}.(n-c)_{(b-e)}&=&(n-c)_{(a+b-d-e-f)}\\
(n-a)_{(b-d)}.(n-a-b+d)_{(c-e-f)}&=&(n-a)_{(b+c-d-e-f)}\\
(n-b)_{(c-e)}.(n-b-c+e)_{(a-d-f)}&=&(n-b)_{(a-d)}.(n-a-b+d)_{(c-e-f)}.
\end{eqnarray*}
Thus, the product $(\alpha_1\pr \alpha_2)\pr \alpha_3$ can be written as follows:
\begin{eqnarray}\label{first product}
(\alpha_1\pr \alpha_2)\pr \alpha_3 =
\sum_{(\epsilon_1,\epsilon_2,\epsilon_3)\in X}&&\frac{1}{2^{2a+2b+2c-3d-3e-3f}}\cdot ((n-a-b+d)_{(c-e-f)})^2\\
&&\cdot(n-c)_{(a+b-d-e-f)}\cdot(n-a)_{(b-d)}\cdot (n-b)_{(a-d)} \nonumber \\
&&\epsilon_1\circ\epsilon_2\circ\epsilon_3. \nonumber
\end{eqnarray}
For any element of $X$, the equality $|\delta_2|=|\delta_1|$ can be written $2c+2b-2e+2a-2d-2f=2a+2b-2d+2c-2e-2g$, so we have $f=g$. Comparing \eqref{second product} and \eqref{first product}, we see that products $(\alpha_1\pr \alpha_2)\pr \alpha_3$ and $\alpha_1\pr (\alpha_2\pr \alpha_3)$ are equal, therefore we get the associativity. 

\subsection{Action of $\mathcal{B}_n\times \mathcal{B}_n$ on $\mathcal{D}_n$}
In this section, we build the algebra $\mathcal{A}_n$ as the algebra of invariant elements by an action of $\mathcal{B}_n\times \mathcal{B}_n$ on $\mathcal{D}_n$.
\begin{definition}
The group $\mathcal{B}_n\times \mathcal{B}_n$ acts on $Q_n$ by:
$$(a,b)\bullet(\sigma,d,d')=(a\sigma b^{-1},b(d),a(d')),$$
for any $(a,b)\in \mathcal{B}_n\times \mathcal{B}_n$ and $(\sigma,d,d') \in Q_n$.
\end{definition}
\begin{obs}
Two partial bijections are in the same orbit if and only if they have the same coset-type.
\end{obs}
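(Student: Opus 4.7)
The plan is to prove both implications explicitly, using that elements of $\mathcal{B}_n$ are exactly those permutations of $[2n]$ that send each pair $\rho(k)=\{2k-1,2k\}$ to some pair $\rho(k')$.

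For the forward direction, suppose $\alpha_2=(a,b)\bullet \alpha_1$ with $a,b\in\mathcal{B}_n$. Then the maps $b:d_1\to d_2=b(d_1)$ and $a:d_1'\to d_2'=a(d_1')$ are bijections that carry $\rho$-pairs to $\rho$-pairs. Consequently, they induce a graph isomorphism between $\Gamma(\alpha_1)$ and $\Gamma(\alpha_2)$: exterior edges (pairs inside $d_i$) go to exterior edges, and interior edges (pairs inside $d_i'$) go to interior edges, while the bipartite matching between them is transported from $\sigma_1$ to $\sigma_2=a\sigma_1 b^{-1}$. Since graph isomorphisms preserve cycle lengths, $ct(\alpha_1)=ct(\alpha_2)$.

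For the converse, suppose $ct(\alpha_1)=ct(\alpha_2)=\lambda$. I fix a bijection between the cycles of $\Gamma(\alpha_1)$ and those of $\Gamma(\alpha_2)$ that pairs cycles of equal length. Within each pair of corresponding cycles $C\leftrightarrow C'$, I construct an identification of vertices respecting the bipartite labelling (exterior to exterior, interior to interior) and such that exterior $\rho$-pairs of $C$ go to exterior $\rho$-pairs of $C'$ and similarly for interior $\rho$-pairs: concretely, list the exterior vertices of $C$ in cyclic order along $C$ as $x_1,x_2,\dots,x_{2k}$ with $\{x_{2j-1},x_{2j}\}=\rho(\cdot)$, do the same for $C'$, and match them in order (choosing a starting point and orientation so the matching respects pairs). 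Gluing the identifications over all cycles gives bijections $b_0:d_1\to d_2$ and $a_0:d_1'\to d_2'$ preserving $\rho$-pairs, with $\sigma_2=a_0\sigma_1 b_0^{-1}$ on $d_2$.

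Finally, since $d_1,d_2,d_1',d_2'\in\mathbf{P}_n$, their complements in $[2n]$ are also unions of $\rho$-pairs. I extend $b_0$ and $a_0$ to $b,a\in\mathcal{B}_n$ by picking any bijection between the $\rho$-pairs of $[2n]\setminus d_1$ and those of $[2n]\setminus d_2$ (and similarly for the primed sets). Then $(a,b)\bullet\alpha_1=\alpha_2$, as required. The only mildly delicate step is the in-cycle matching of the previous paragraph, where one must verify that one of the dihedral choices of starting point/orientation on $C'$ produces a pair-preserving bijection; this is immediate from the fact that $\rho$-pairs occur at the even/odd positions in the cyclic sequence of exterior (resp. interior) vertices of a cycle, so any rotation by an even number of steps works.
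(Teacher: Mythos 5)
Your proof is correct. The paper offers no proof at all for this statement --- it is recorded as an unproved Observation, implicitly by analogy with the classical fact (cited from Macdonald, p.~401) that two permutations of $\mathcal{S}_{2n}$ lie in the same $\mathcal{B}_n$-double coset iff they have the same coset-type --- so your argument simply supplies the missing justification. Both directions check out: the forward one because $a$ and $b$ preserve $\rho$-pairs and hence induce a colour-preserving isomorphism $\Gamma(\alpha_1)\to\Gamma(\alpha_2)$, and the converse because any colour-preserving isomorphism of the graphs yields pair-preserving bijections $b_0:d_1\to d_2$, $a_0:d_1'\to d_2'$ with $\sigma_2=a_0\sigma_1 b_0^{-1}$, which extend to elements of $\mathcal{B}_n$ since complements of sets in $\mathbf{P}_n$ are again unions of $\rho$-pairs of equal cardinality.
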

We can extend this action by linearity to get an action of $\mathcal{B}_n\times \mathcal{B}_n$ on $\mathcal{D}_n$.
\begin{lem}\label{compatibility of E}
For any three permutations $a,b$ and $c$ of $\mathcal{B}_n$ and for any partial bijections $\alpha_1,\alpha_2$ of $n$, the set $E_{\alpha_1}^{\alpha_2}(n)$ is in bijection with $E_{(a,b)\bullet \alpha_1}^{(b,c)\bullet \alpha_2}(n)$.
\end{lem}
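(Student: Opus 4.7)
The plan is to construct an explicit bijection
$$\Phi: E_{\alpha_1}^{\alpha_2}(n) \longrightarrow E_{(a,b)\bullet \alpha_1}^{(b,c)\bullet \alpha_2}(n), \qquad (\widetilde{\alpha_1},\widetilde{\alpha_2}) \longmapsto \big((a,b)\bullet \widetilde{\alpha_1},\ (b,c)\bullet \widetilde{\alpha_2}\big).$$
The choice is forced by the fact that the two actions share the permutation $b$ in the middle, which is exactly what we need in order for the common domain/codomain condition defining $E$ to be transported correctly.

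First, I would check that $\Phi$ lands in the target set. Using the action formula, the domain of $(a,b)\bullet\widetilde{\alpha_1}$ is $b(\widetilde{d_1})$ and the codomain of $(b,c)\bullet\widetilde{\alpha_2}$ is $b(\widetilde{d_2'})$. Since $(\widetilde{\alpha_1},\widetilde{\alpha_2})\in E_{\alpha_1}^{\alpha_2}(n)$ implies $\widetilde{d_1}=\widetilde{d_2'}=d_1\cup d_2'$, applying $b$ yields $b(\widetilde{d_1})=b(\widetilde{d_2'})=b(d_1)\cup b(d_2')$. The right-hand side is precisely the union of the domain of $(a,b)\bullet \alpha_1$ and the codomain of $(b,c)\bullet \alpha_2$, as required.

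Second, I must verify that $(a,b)\bullet \widetilde{\alpha_1}$ is a trivial extension of $(a,b)\bullet \alpha_1$ (the argument for the second component is identical). The inclusion $d_1\subseteq \widetilde{d_1}$ gives $b(d_1)\subseteq b(\widetilde{d_1})$, and the restriction identity $(a\widetilde{\sigma_1}b^{-1})|_{b(d_1)} = a\sigma_1 b^{-1}|_{b(d_1)}$ is immediate. The key point is preservation of coset-type: because $a,b\in \mathcal{B}_n$, they permute the pairs $\rho(k)=\{2k-1,2k\}$, so the action only relabels vertices of $\Gamma(\widetilde{\alpha_1})$ in a way that respects the pairing used to build the exterior and interior edges. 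The cycle structure of $\Gamma$ is therefore unchanged, and $ct((a,b)\bullet\widetilde{\alpha_1})=ct(\widetilde{\alpha_1})$. Combining this with the analogous equality for $\alpha_1$ gives the coset-type condition in Definition \ref{definition 3.2}.

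Finally, $\Phi$ is a bijection because $\mathcal{B}_n$ is a group: applying the same construction with $(a^{-1},b^{-1})$ and $(b^{-1},c^{-1})$ produces a two-sided inverse of $\Phi$. The only non-routine ingredient in the whole argument is the coset-type preservation in the second step, and this is exactly where the hypothesis $a,b,c\in \mathcal{B}_n$ (rather than arbitrary elements of $\mathcal{S}_{2n}$) enters. This lemma will then feed into the definition of $\mathcal{A}_n$ as the space of $\mathcal{B}_n\times \mathcal{B}_n$-invariants in $\mathcal{D}_n$, by ensuring that the product $\pr$ is equivariant in a suitable sense.
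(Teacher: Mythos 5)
Your proposal is correct and follows essentially the same route as the paper: you construct the map $(\widetilde{\alpha_1},\widetilde{\alpha_2})\mapsto((a,b)\bullet\widetilde{\alpha_1},(b,c)\bullet\widetilde{\alpha_2})$ and invert it via $(a^{-1},b^{-1})$ and $(b^{-1},c^{-1})$, exactly as in the paper's proof with its maps $\Theta$ and $\Psi$. In fact you spell out the well-definedness check (domain/codomain matching and coset-type preservation under the $\mathcal{B}_n\times\mathcal{B}_n$-action) in more detail than the paper, which simply asserts it can be checked easily.
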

\begin{proof}
We can check easily that the two following functions:
$$\begin{array}{ccccc}
\Theta & : & E_{\alpha_1}^{\alpha_2}(n) & \to & E_{(a,b)\bullet \alpha_1}^{(b,c)\bullet \alpha_2}(n) \\
& & (\widetilde{\alpha_1},\widetilde{\alpha_2}) & \mapsto & ((a,b)\bullet \widetilde{\alpha_1},(b,c)\bullet \widetilde{\alpha_2}) \\
\end{array},$$
and
$$\begin{array}{ccccc}
\Psi & : & E_{(a,b)\bullet \alpha_1}^{(b,c)\bullet \alpha_2}(n) & \to & E_{\alpha_1}^{\alpha_2}(n) \\
& & (\beta_1,\beta_2) & \mapsto & ((a^{-1},b^{-1})\bullet \beta_1,(b^{-1},c^{-1})\bullet \beta_2) \\
\end{array},$$
are well defined. Besides, they are inverse from each other:
\begin{eqnarray*}
\Psi\Big(\Theta\big((\widetilde{\alpha_1},\widetilde{\alpha_2})\big)\Big)&=&\Psi\Big(\big((a,b)\bullet \widetilde{\alpha_1},(b,c)\bullet \widetilde{\alpha_2}\big)\Big)\\
&=&\Big((a^{-1},b^{-1})\bullet (a,b)\bullet \widetilde{\alpha_1},(b^{-1},c^{-1})\bullet (b,c)\bullet \widetilde{\alpha_2}\Big)\\
&=&(\widetilde{\alpha_1},\widetilde{\alpha_2}),
\end{eqnarray*}
and, similarly,
\begin{eqnarray*}
\Theta\Big(\Psi\big(\beta_1,\beta_2\big)\Big)&=&(\beta_1,\beta_2).
\end{eqnarray*}
Thus $\Theta$ defines a bijection between $E_{\alpha_1}^{\alpha_2}(n)$ and $E_{(a,b)\bullet \alpha_1}^{(b,c)\bullet \alpha_2}(n)$ with inverse $\Psi$.
\end{proof}
It follows from this lemma that the action $\bullet$ is compatible with the product of $Q_n$. Namely, we can prove the following corollary.
\begin{cor}\label{compatibility}
For any $(a,b,c)\in \mathcal{B}_n^3$ and for any partial bijections $\alpha_1,\alpha_2$ of $n$, we have: 
\begin{equation}\label{equation 4}
(a,c)\bullet(\alpha_1\pr \alpha_2)=((a,b)\bullet \alpha_1)\pr ((b,c)\bullet \alpha_2).
\end{equation}
\end{cor}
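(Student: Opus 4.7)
The plan is to unfold both sides of \eqref{equation 4} using the defining formula \eqref{equation 3} and then identify the two resulting sums via the bijection constructed in Lemma \ref{compatibility of E}.

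First I establish a pointwise version of \eqref{equation 4}: for any pair $(\widetilde{\alpha_1},\widetilde{\alpha_2})\in E_{\alpha_1}^{\alpha_2}(n)$, I claim that
\begin{equation*}
(a,c)\bullet(\widetilde{\alpha_1}\circ \widetilde{\alpha_2}) = \bigl((a,b)\bullet \widetilde{\alpha_1}\bigr)\circ \bigl((b,c)\bullet \widetilde{\alpha_2}\bigr).
\end{equation*}
This is a direct computation from the definitions. Writing $\widetilde{\alpha_i}=(\widetilde{\sigma_i},\widetilde{d_i},\widetilde{d_i'})$ with $\widetilde{d_1}=\widetilde{d_2'}$, the left hand side equals $(a\widetilde{\sigma_1}\widetilde{\sigma_2}c^{-1}, c(\widetilde{d_2}), a(\widetilde{d_1'}))$. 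On the right hand side, $(a,b)\bullet \widetilde{\alpha_1}=(a\widetilde{\sigma_1}b^{-1}, b(\widetilde{d_1}), a(\widetilde{d_1'}))$ and $(b,c)\bullet \widetilde{\alpha_2}=(b\widetilde{\sigma_2}c^{-1}, c(\widetilde{d_2}), b(\widetilde{d_2'}))$. The codomain of the second equals $b(\widetilde{d_2'})=b(\widetilde{d_1})$, which equals the domain of the first, so the composition makes sense and yields exactly the same triple.

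Second, I apply $(a,c)\bullet$ termwise to \eqref{equation 3}, giving
\begin{equation*}
(a,c)\bullet(\alpha_1\pr \alpha_2)=\frac{1}{|E_{\alpha_1}^{\alpha_2}(n)|}\sum_{(\widetilde{\alpha_1},\widetilde{\alpha_2})\in E_{\alpha_1}^{\alpha_2}(n)}\bigl((a,b)\bullet \widetilde{\alpha_1}\bigr)\circ\bigl((b,c)\bullet \widetilde{\alpha_2}\bigr),
\end{equation*}
by the pointwise identity above. The bijection $\Theta$ from Lemma \ref{compatibility of E} sends $(\widetilde{\alpha_1},\widetilde{\alpha_2})\in E_{\alpha_1}^{\alpha_2}(n)$ to exactly $((a,b)\bullet\widetilde{\alpha_1},(b,c)\bullet\widetilde{\alpha_2})\in E_{(a,b)\bullet\alpha_1}^{(b,c)\bullet\alpha_2}(n)$ and is a bijection of these two indexing sets, so in particular these sets have the same cardinality. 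Re-indexing the sum over its image gives
\begin{equation*}
(a,c)\bullet(\alpha_1\pr \alpha_2)=\frac{1}{|E_{(a,b)\bullet\alpha_1}^{(b,c)\bullet\alpha_2}(n)|}\sum_{(\beta_1,\beta_2)\in E_{(a,b)\bullet\alpha_1}^{(b,c)\bullet\alpha_2}(n)}\beta_1\circ \beta_2,
\end{equation*}
which is the definition of $\bigl((a,b)\bullet \alpha_1\bigr)\pr\bigl((b,c)\bullet \alpha_2\bigr)$. This proves \eqref{equation 4}.

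The only content beyond bookkeeping is the pointwise composition identity, which is routine; the main structural input is Lemma \ref{compatibility of E}, which guarantees that the transformed pairs still parameterize the relevant extension set. I do not anticipate any genuine obstacle; the result is essentially a naturality statement expressing that the symmetric definition of $\pr$ as an average over $E_{\alpha_1}^{\alpha_2}(n)$ is $\mathcal{B}_n\times\mathcal{B}_n$-equivariant.
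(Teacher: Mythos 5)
Your proposal is correct and follows essentially the same route as the paper's proof: first the pointwise identity $(a,c)\bullet(\widetilde{\alpha_1}\circ\widetilde{\alpha_2})=((a,b)\bullet\widetilde{\alpha_1})\circ((b,c)\bullet\widetilde{\alpha_2})$ verified by direct computation on triples, then re-indexing the averaged sum via the bijection of Lemma \ref{compatibility of E}, which also gives equality of the normalizing cardinalities. Your write-up is if anything slightly more explicit than the paper's about why the composition on the right-hand side is well defined.
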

\begin{proof} If $(\widetilde{\alpha_1},\widetilde{\alpha_2}) \in E_{\alpha_1}^{\alpha_2}(n)$, we have:

\begin{eqnarray*}
\big((a,b)\bullet \widetilde{ \alpha_1})\circ ((b,c)\bullet\widetilde{\alpha_2}\big)&=&\big(a\widetilde{\sigma_1}b^{-1},b(\widetilde{d_1}),a(\widetilde{d'_1})\big)\circ \big(b\widetilde{\sigma_2}c^{-1},c(\widetilde{d_2}),b(\widetilde{d'_2})\big)\\
&=&(a\widetilde{\sigma_1}\widetilde{\sigma_2}c^{-1},c(\widetilde{d_2}),a(\widetilde{d'_1}))\\
&=&(a,c)\bullet (\widetilde{\alpha_1}\circ \widetilde{\alpha_2}).
\end{eqnarray*}
Then, we can write:
\begin{eqnarray*}
(a,c)\bullet(\alpha_1\pr \alpha_2)&=&\frac{1}{| E_{\alpha_1}^{\alpha_2}(n)|}\sum_{(\widetilde{\alpha_1},\widetilde{\alpha_2}) \in E_{\alpha_1}^{\alpha_2}(n)} (a,c)\bullet (\widetilde{\alpha_1}\circ \widetilde{\alpha_2})\\
&=&\frac{1}{| E_{\alpha_1}^{\alpha_2}(n)|}\sum_{(\widetilde{\alpha_1},\widetilde{\alpha_2}) \in E_{\alpha_1}^{\alpha_2}(n)}  \big((a,b)\bullet \widetilde{ \alpha_1}\big)\circ \big((b,c)\bullet\widetilde{\alpha_2}\big)\\
&=&\frac{1}{| E_{(a,b)\bullet \alpha_1}^{(b,c)\bullet\alpha_2}(n)|}\sum_{(\widetilde{(a,b)\bullet \alpha_1},\widetilde{(b,c)\bullet \alpha_2}) \in E_{(a,b)\bullet \alpha_1}^{(b,c)\bullet \alpha_2}(n)}  \widetilde{(a,b)\bullet  \alpha_1}\circ \widetilde{(b,c)\bullet\alpha_2}\\
&=&((a,b)\bullet \alpha_1)\pr ((b,c)\bullet \alpha_2)).
\end{eqnarray*}
\end{proof}
We consider the set $\mathcal{A}_n$ of invariant elements by the action of $\mathcal{B}_n\times \mathcal{B}_n$ on $\mathcal{D}_n$:
$$\mathcal{A}_n=\mathcal{D}_n^{\mathcal{B}_n\times \mathcal{B}_n}=\lbrace x\in \mathcal{D}_n ~|~ (a,b)\bullet x=x~~\text{for any } (a,b)\in \mathcal{B}_n\times \mathcal{B}_n\rbrace.$$
For every partition $\lambda$ such that $|\lambda|\leq n$, we define the set $A_{\lambda, n}$ to be the set of all partial bijections $\alpha$ of $n$ such that $ct(\alpha)=\lambda$. The sum of all elements in $A_{\lambda, n}$ is denoted by $S_{\lambda, n}$.
\begin{prop}\label{basis of An}
The set $\mathcal{A}_n$ is an algebra with basis the elements $(S_{\lambda, n})_{|\lambda|\leq n}$.
\end{prop}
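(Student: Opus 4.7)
The plan is to prove the statement in two pieces: first, that $\mathcal{A}_n$ is closed under $\pr$ (so is a subalgebra of $\mathcal{D}_n$), and second, that the orbit sums $S_{\lambda,n}$ form a basis for the space of invariants.

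For the subalgebra part, I would extend the product $\pr$ bilinearly to $\mathcal{D}_n$ and apply Corollary \ref{compatibility}. Take $x, y \in \mathcal{A}_n$. For any $(a,c) \in \mathcal{B}_n \times \mathcal{B}_n$, pick any $b \in \mathcal{B}_n$; using linearity of the action and of $\pr$, the corollary gives
\begin{equation*}
(a,c) \bullet (x \pr y) \;=\; \big((a,b)\bullet x\big) \pr \big((b,c)\bullet y\big) \;=\; x \pr y,
\end{equation*}
the last equality by invariance of $x$ and $y$. Hence $x \pr y \in \mathcal{A}_n$, so $\mathcal{A}_n$ is a subalgebra.

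For the basis statement, I would invoke the Observation following the definition of the action, which identifies the $\mathcal{B}_n \times \mathcal{B}_n$-orbits on $Q_n$ with the sets $A_{\lambda,n}$ indexed by partitions $\lambda$ with $|\lambda| \leq n$. Since the action on $\mathcal{D}_n = \mathbb{C}[Q_n]$ is the linear extension of a permutation action on the basis $Q_n$, a classical argument shows that the orbit sums form a basis of the invariant subspace: every $S_{\lambda,n}$ is manifestly invariant; they are linearly independent because distinct orbits have disjoint supports in $Q_n$; and given any invariant $x = \sum_{\alpha \in Q_n} c_\alpha \alpha \in \mathcal{A}_n$, applying $(a,b)\bullet$ and comparing coefficients forces $c_\alpha$ to be constant on each orbit, whence $x = \sum_\lambda c_\lambda S_{\lambda,n}$ with $c_\lambda$ the common value of $c_\alpha$ for $\alpha \in A_{\lambda,n}$.

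There is no real obstacle here: both steps are essentially formal consequences of results already established in the excerpt, namely the compatibility of the action with $\pr$ (Corollary \ref{compatibility}) and the identification of orbits with coset-types. The only mild care needed is to ensure that Corollary \ref{compatibility} and the action $\bullet$ extend bilinearly/linearly to the whole algebra $\mathcal{D}_n$, which is immediate from how $\mathcal{D}_n$ was constructed.
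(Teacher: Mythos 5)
Your proposal is correct and follows essentially the same route as the paper: closure under $\pr$ via Corollary \ref{compatibility} applied to invariant elements, and the basis claim via the standard orbit-sum argument using the identification of $\mathcal{B}_n\times\mathcal{B}_n$-orbits with coset-types. You are slightly more explicit than the paper about linear independence (disjoint supports of distinct orbits), but the substance is identical.
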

\begin{proof}
For every $ (a,b)\in \mathcal{B}_n\times \mathcal{B}_n$, and for every $x,y \in \mathcal{A}_n$, we have by linearity:
$$(a,b)\bullet(x\pr y)= ((a,id)\bullet x)\pr ((id,b)\bullet y)=x\pr y.$$
So $\mathcal{A}_n$ is an algebra.\\
Any element $x\in \mathcal{D}_n$ writes $\displaystyle{x= \sum_{k=1}^{n}\sum_{d,d^{'}\in \mathbf{P}_n \atop{|d|=|d^{'}|=2k}}\sum_{\sigma:d\rightarrow d^{'}\atop{\text{bijection}}}c_{(\sigma,d,d^{'})}(\sigma,d,d^{'})}$. If, furthermore $x$ is in $\mathcal{A}_n$, then for every $(a,b)\in \mathcal{B}_n\times \mathcal{B}_n$ we have: $$\sum_{k=1}^{n}\sum_{d,d^{'}\in \mathbf{P}_n\atop{|d|=|d^{'}|=2k}}\sum_{\sigma:d\rightarrow d^{'}\atop{\text{bijection}}}c_{(\sigma,d,d^{'})}(a\sigma b^{-1},b(d),a(d^{'}))=\sum_{k=1}^{n}\sum_{d,d^{'}\in \mathbf{P}_n\atop{|d|=|d^{'}|=2k}}\sum_{\sigma:d\rightarrow d^{'}\atop{\text{bijection}}}c_{(\sigma,d,d^{'})}(\sigma,d,d^{'}).$$
Thus, for any $(a,b)\in \mathcal{B}_n\times \mathcal{B}_n$, we have $c_{(a\sigma b^{-1},b(d),a(d^{'}))}=c_{(\sigma,d,d^{'})}$. This means that if $x\in \mathcal{A}_n$, all partial permutations in the same orbit -- that is with the same coset-type -- have the same coefficients. Therefore, the elements $(S_{\lambda, n})_{|\lambda|\leq n}$ form a basis of $\mathcal{A}_n$.
\end{proof}
\begin{cor}\label{corollary 3.1}
If $\lambda$ and $\delta$ are two partitions such that $|\lambda|,|\delta|\leq n$, there exist unique constants $c_{\lambda\delta}^{\rho}(n)\in \mathbb{C}$ such that:
$$S_{\lambda,n}\pr S_{\delta,n}=\sum_{\rho \text{ partition} \atop {\max{(|\lambda|,|\delta|)}\leq|\rho|\leq \min{(|\lambda|+|\delta|,n)}}}c_{\lambda\delta}^{\rho}(n)S_{\rho,n}.$$
\end{cor}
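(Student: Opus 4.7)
The existence and uniqueness of the coefficients $c_{\lambda\delta}^{\rho}(n)$ in the claimed expansion, apart from the constraints on $|\rho|$, is immediate. Indeed, by Proposition \ref{basis of An}, $\mathcal{A}_n$ is an algebra with basis $(S_{\rho,n})_{|\rho|\leq n}$; since $S_{\lambda,n}$ and $S_{\delta,n}$ both lie in $\mathcal{A}_n$, their product $S_{\lambda,n}\pr S_{\delta,n}$ belongs to $\mathcal{A}_n$ and therefore admits a unique expansion on this basis. So the real content of the statement is the range of partition sizes $|\rho|$ for which $c_{\lambda\delta}^{\rho}(n)$ can be nonzero.

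The plan for this is to unfold the definition of $\pr$ via the sets $E_{\alpha_1}^{\alpha_2}(n)$ and analyze coset-types of the composed partial bijections. Writing $S_{\lambda,n}\pr S_{\delta,n}$ as a sum over pairs $(\alpha_1,\alpha_2)\in A_{\lambda,n}\times A_{\delta,n}$ of the averages \eqref{equation 3}, every partial bijection that appears has the form $\widetilde{\alpha_1}\circ\widetilde{\alpha_2}$ for some $(\widetilde{\alpha_1},\widetilde{\alpha_2})\in E_{\alpha_1}^{\alpha_2}(n)$. Its domain is $\widetilde{d_2}=d_1\cup d'_2$, so if $\rho$ denotes its coset-type,
$$2|\rho|=|d_1\cup d'_2|=|d_1|+|d'_2|-|d_1\cap d'_2|=2|\lambda|+2|\delta|-2e,$$
where $2e:=|d_1\cap d'_2|\geq 0$. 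This immediately gives $|\rho|\leq |\lambda|+|\delta|$, while $|\rho|\leq n$ holds trivially since $\widetilde{\alpha_1}\circ\widetilde{\alpha_2}\in Q_n$.

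For the lower bound, the key observation is that $d_1\cap d'_2$ is a subset of both $d_1$ and $d'_2$, so
$$2e\leq \min(|d_1|,|d'_2|)=\min(2|\lambda|,2|\delta|),$$
whence $|\rho|=|\lambda|+|\delta|-e\geq \max(|\lambda|,|\delta|)$. Combining these two inequalities yields $\max(|\lambda|,|\delta|)\leq |\rho|\leq \min(|\lambda|+|\delta|,n)$, so any $S_{\rho,n}$ that appears in the expansion of $S_{\lambda,n}\pr S_{\delta,n}$ with a nonzero coefficient has $|\rho|$ in the required range.

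I do not expect any serious obstacle here: the proof is essentially bookkeeping on the sizes of domains and codomains of the partial bijections appearing in the average \eqref{equation 3}. The only mild subtlety is to remember that the coset-type of a partial bijection is half the size of its domain in the sense used in the definition of $ct$, so the size translation $|\rho|=|\widetilde{d_2}|/2$ must be done carefully; once that is in place, both inequalities drop out by elementary inclusion-exclusion.
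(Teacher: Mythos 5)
Your proof is correct and follows essentially the same route as the paper: both reduce the statement to bounding $|\rho|=\tfrac{1}{2}|d_1\cup d_2'|$ between $\max(|\lambda|,|\delta|)$ and $\min(|\lambda|+|\delta|,n)$ by elementary bookkeeping on set sizes, with existence and uniqueness coming from the basis $(S_{\rho,n})$ of $\mathcal{A}_n$. (One cosmetic remark: the domain of $\widetilde{\alpha_1}\circ\widetilde{\alpha_2}$ is $\widetilde{d_2}$, which equals $d_1\cup d_2'$ only in cardinality rather than as a set, but since your argument uses only cardinalities nothing is affected.)
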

\begin{proof} We only have to prove the inequalities on the size of $\rho$. Let $\alpha_1$ and $\alpha_2$ be two partial bijections of $n$ with coset-type $\lambda$ and $\delta$. By definition (see Figure \ref{fig:composition}), every partial bijection of $n$ that appears in the sum of the product $\alpha_1\pr \alpha_2$ has some coset-type $\rho$ with $|\rho|=\frac{|d_1\cup d_2'|}{2}$. But
$$ \max\big( \frac{|d_1|}{2},\frac{|d_2'|}{2}\big)=\max (|\lambda|,|\delta|) \leq |\rho|=\frac{|d_1\cup d_2'|}{2}\leq \frac{|d_1|+|d_2'|}{2}=|\lambda|+|\delta|. \qedhere$$
\end{proof}
\begin{lem}\label{image du base}
Let $\lambda$ be a partition such that $|\lambda|=r\leq n$, we have:
$$\psi_n(S_{\lambda,n})=\frac{1}{2^{n-|\lambda|}(n-|\lambda|)!}\begin{pmatrix}
n-|\bar{\lambda}|\\
m_1(\lambda)
\end{pmatrix}K_{\bar{\lambda}}(n).$$
\end{lem}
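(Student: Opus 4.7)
The plan is to expand $\psi_n(S_{\lambda,n})$ as a double sum over $A_{\lambda,n}$ and its trivial extensions in $\mathcal{S}_{2n}$, and then swap the order of summation to count, for each fixed $\omega\in\mathcal{S}_{2n}$, the number $N(\omega)$ of pairs $(\alpha,\hat{\alpha})$ with $\alpha\in A_{\lambda,n}$, $\hat{\alpha}\in\mathcal{S}_{2n}\cap P_\alpha(n)$ and $\hat{\sigma}=\omega$. Since every $\alpha\in A_{\lambda,n}$ has $|d|=2|\lambda|$, the normalizing factor $2^{n-|\lambda|}(n-|\lambda|)!$ from the definition of $\psi_n$ in Proposition \ref{proposition 3.2} is the same for each such $\alpha$, so it factors out and one obtains
$$\psi_n(S_{\lambda,n})=\frac{1}{2^{n-|\lambda|}(n-|\lambda|)!}\sum_{\omega\in\mathcal{S}_{2n}}N(\omega)\,\omega.$$

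To determine $N(\omega)$: if $\omega$ is a trivial extension of some $\alpha$ with $ct(\alpha)=\lambda$, Definition \ref{definition 3.2} forces $ct(\omega)=\lambda\cup 1^{n-|\lambda|}=\bar{\lambda}\cup 1^{n-|\bar{\lambda}|}$, so only permutations appearing in $K_{\bar{\lambda}}(n)$ can contribute. For such $\omega$, specifying $\alpha$ reduces to choosing the domain $d\in\mathbf{P}_n$, since then $\sigma=\omega|_d$ and $d'=\omega(d)$ are determined. I would then argue that the three conditions ``$d\in\mathbf{P}_n$, $\omega(d)\in\mathbf{P}_n$ and $ct(\omega|_d)=\lambda$'' can be read directly on the graph $\Gamma(\omega)$ from Section \ref{section 2.2}: being in $\mathbf{P}_n$ means that $d$ is closed under exterior edges, while the image condition means that $d$ is closed under interior edges (via $\omega$), so $d$ must be a union of whole cycles of $\Gamma(\omega)$ whose collected half-lengths form the multiset $\lambda$.

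Since $\omega\in K_{\bar{\lambda}}(n)$ has exactly $l(\bar{\lambda})$ cycles of half-length $\geq 2$ (whose half-lengths are the parts of $\bar{\lambda}$) and $n-|\bar{\lambda}|$ cycles of half-length $1$, matching the multiset $\lambda=\bar{\lambda}\cup 1^{m_1(\lambda)}$ forces the selection of every cycle of half-length $\geq 2$ and a choice of exactly $m_1(\lambda)$ cycles among the $n-|\bar{\lambda}|$ cycles of half-length $1$. This gives $N(\omega)=\binom{n-|\bar{\lambda}|}{m_1(\lambda)}$, independent of the chosen $\omega\in K_{\bar{\lambda}}(n)$; plugging back into the displayed equation yields precisely the formula of the lemma. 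The main (though essentially routine) point to verify carefully is the equivalence between the pointwise conditions ``$d\in\mathbf{P}_n$ and $\omega(d)\in\mathbf{P}_n$'' and the global statement that $d$ is a union of cycles of $\Gamma(\omega)$; once this is set up, the remainder is bookkeeping on multisets of cycle lengths.
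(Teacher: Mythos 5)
Your proposal is correct and follows essentially the same route as the paper: both fix a permutation $\omega$ with $ct(\omega)=\bar{\lambda}\cup 1^{n-|\bar{\lambda}|}$, observe that the uniform normalizing factor $2^{n-|\lambda|}(n-|\lambda|)!$ factors out, and count the partial bijections $\alpha\in A_{\lambda,n}$ admitting $\omega$ as a trivial extension by noting that $d$ must contain all cycles of $\Gamma(\omega)$ of half-length at least $2$ (the paper's ``support'') together with a choice of $m_1(\lambda)$ of the $n-|\bar{\lambda}|$ cycles of half-length $1$, giving $\binom{n-|\bar{\lambda}|}{m_1(\lambda)}$. Your explicit justification that $d\in\mathbf{P}_n$ and $\omega(d)\in\mathbf{P}_n$ force $d$ to be a union of cycles of $\Gamma(\omega)$ is a point the paper leaves implicit, but the argument is the same.
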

\begin{proof}We first prove the following equation:
\begin{equation}\label{projection sur S2n}
\sum_{\alpha\in A_{\lambda,n}}\sum_{\hat{\alpha}\in \mathcal{S}_{2n}\cap P_\alpha(n) }\hat{\sigma}=\begin{pmatrix}
n-|\bar{\lambda}|\\
m_1(\lambda)
\end{pmatrix}K_{\bar{\lambda}}(n).
\end{equation} 
Fix a permutation $\omega\in K_{\bar{\lambda}}(n)$, that is $\omega\in \mathcal{S}_{2n}$ and $ct(\omega)=\bar{\lambda}\cup 1^{n-|\bar{\lambda|}}$. We are looking for the number of partial bijections $\alpha\in A_{\lambda,n}$ such that $\omega$ is one of its trivial extensions. There is a unique set $S$ such that $ct(\omega_{|_{S}})=\bar{\lambda}$. We call this set the support of $\omega$ and denote it $\supp(\omega)$. The following condition is necessary so that $\omega$ is a trivial extension of $\alpha$: $\supp(\omega)\subseteq d$ and $\alpha_{|_{\supp(\omega)}}$ must be equal to $\omega_{|_{\supp(\omega)}}$. Thus the partial bijections $\alpha$ we are looking for are the restrictions of $\omega$ to sets $\supp(\omega)\sqcup x$, with $|\supp(\omega)\sqcup x|=2|\lambda|$. Since $|\supp(\omega)|=2|\bar{\lambda}|$, one has the necessarily $|x|=2(|\lambda|-|\bar{\lambda}|)$. So the number of such $\alpha$ is $\begin{pmatrix}
n-|\bar{\lambda}|\\
|\lambda|-|\bar{\lambda}|
\end{pmatrix}=\begin{pmatrix}
n-|\bar{\lambda}|\\
m_1(\lambda)
\end{pmatrix}$. This ends the proof of equation \eqref{projection sur S2n}.\\
By applying $\psi_n$ to $S_{\lambda,n}$, we get:
\begin{eqnarray*}
\psi_n(S_{\lambda,n})&=&\psi_n\big(\sum_{\alpha\in A_{\lambda,n}}\alpha\big)\\
&=&\frac{1}{2^{n-|\lambda|}(n-|\lambda|)!}\sum_{\alpha\in A_{\lambda,n}}\sum_{\hat{\alpha}\in \mathcal{S}_{2n}\cap P_\alpha(n) }\hat{\sigma}\\
&=&\frac{1}{2^{n-|\lambda|}(n-|\lambda|)!}\begin{pmatrix}
n-|\bar{\lambda}|\\
m_1(\lambda)
\end{pmatrix}K_{\bar{\lambda}}(n).
\end{eqnarray*}
\end{proof}
This lemma implies that $\psi_n(\mathcal{A}_n)\subseteq \mathbb{C}[\mathcal{B}_n/ \mathcal{S}_{2n}\setminus \mathcal{B}_n]$. The homomorphism $\mathcal{A}_n\rightarrow \mathbb{C}[\mathcal{B}_n/ \mathcal{S}_{2n}\setminus \mathcal{B}_n]$ mentioned in Section \ref{section 2.5} is the restriction $\psi_{n_{|_{\mathcal{A}_n}}}$.
\subsection{homomorphism from $\mathcal{A}_{n+1}$ to $\mathcal{A}_n$}
This paragraph is dedicated to the proof of the following proposition:
\begin{prop}
The function $\varphi_{n}$ defined as follows:
$$\begin{array}{ccccc}
\varphi_{n} & : & \mathcal{A}_{n+1} & \to & \mathcal{A}_n \\
& & S_{\lambda,n+1} & \mapsto  & \left\{
\begin{array}{ll}
  \frac{n+1}{(n+1-|\lambda|)}S_{\lambda,n} & \qquad \mathrm{if}\quad |\lambda| < n+1 ,\\
  0 & \qquad \mathrm{if}\quad |\lambda|=n+1 ,\\
 \end{array}
 \right. \\
\end{array}$$
is a homomorphism of algebras.
\end{prop}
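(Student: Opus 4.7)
\quad The plan is to verify the homomorphism property $\varphi_n(S_{\lambda,n+1}\pr S_{\delta,n+1}) = \varphi_n(S_{\lambda,n+1})\pr \varphi_n(S_{\delta,n+1})$ on basis elements and reduce it to an identity on the structure constants of Corollary~\ref{corollary 3.1}. Expanding both sides in the basis $(S_{\rho,n})_{|\rho|\leq n}$ of $\mathcal{A}_n$ via the definition of $\varphi_n$ (terms with $|\rho|=n+1$ on the left are killed by $\varphi_n$; when $|\lambda|=n+1$ or $|\delta|=n+1$ both sides are zero because every $\rho$ appearing in the product then has $|\rho|=n+1$) and matching coefficients of $S_{\rho,n}$ reduces the goal to the identity
\[
c_{\lambda\delta}^\rho(n+1)\,(n+1-|\lambda|)(n+1-|\delta|) \;=\; (n+1)(n+1-|\rho|)\,c_{\lambda\delta}^\rho(n)
\]
for every proper partition $\rho$ with $|\rho|\leq n$.

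Second, I put $c_{\lambda\delta}^\rho(m)$ into a uniform combinatorial form. Fixing any $\beta_0\in A_{\rho,m}$, $c_{\lambda\delta}^\rho(m)$ is the coefficient of $\beta_0$ in $S_{\lambda,m}\pr S_{\delta,m}$, which by the definition of $\pr$ is a weighted sum over quadruples $(\alpha_1,\alpha_2,\tilde\alpha_1,\tilde\alpha_2)$ with $\alpha_1\in A_{\lambda,m}$, $\alpha_2\in A_{\delta,m}$, $(\tilde\alpha_1,\tilde\alpha_2)\in E_{\alpha_1}^{\alpha_2}(m)$, and $\tilde\alpha_1\circ\tilde\alpha_2 = \beta_0$, weighted by $1/|E_{\alpha_1}^{\alpha_2}(m)|$. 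Every such quadruple satisfies $|d_1\cap d'_2|/2 = |\lambda|+|\delta|-|\rho|$ (from $|d_1\cup d'_2|=|\tilde d'_2|=|d(\beta_0)|=2|\rho|$), so by formula~\eqref{cardinal de E} the weight $|E_{\alpha_1}^{\alpha_2}(m)|$ takes the common value
\[
E_{\lambda\delta\rho}(m) = 2^{2|\rho|-|\lambda|-|\delta|}\,(m-|\lambda|)_{|\rho|-|\lambda|}\,(m-|\delta|)_{|\rho|-|\delta|}.
\]
Letting $T_{\lambda\delta}^\rho(m)$ count such quadruples, one has $c_{\lambda\delta}^\rho(m) = T_{\lambda\delta}^\rho(m)/E_{\lambda\delta\rho}(m)$, and the elementary identity $(M+1)_k/(M)_k = (M+1)/(M-k+1)$ yields $E_{\lambda\delta\rho}(n+1)/E_{\lambda\delta\rho}(n) = (n+1-|\lambda|)(n+1-|\delta|)/(n+1-|\rho|)^2$. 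The identity from the first step thus collapses to the purely enumerative claim
\[
T_{\lambda\delta}^\rho(n+1) \;=\; \frac{n+1}{n+1-|\rho|}\,T_{\lambda\delta}^\rho(n).
\]

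Third, and this is the main obstacle, I would prove the ratio of $T$'s by a factorisation/symmetry argument. Summing over $\beta\in A_{\rho,m}$ gives $M_{\lambda\delta}^\rho(m) := |A_{\rho,m}|\,T_{\lambda\delta}^\rho(m)$, the number of quadruples for which only the coset-type of $\tilde\alpha_1\circ\tilde\alpha_2$ is prescribed to be $\rho$ (not the value itself). I claim that $M_{\lambda\delta}^\rho(m) = \binom{m}{|\rho|}^3\,C_{\lambda\delta\rho}$ for a constant $C_{\lambda\delta\rho}$ independent of $m$: the three factors $\binom{m}{|\rho|}$ correspond to the independent choices of the three ambient sets $\tilde d_1=\tilde d'_2$, $\tilde d_2$, and $\tilde d'_1$ (each a union of $|\rho|$ pairs from $[2m]$), and once these are fixed the remaining data (the pair-subsets $d_1,d'_1,d_2,d'_2$ with $d_1\cup d'_2=\tilde d_1$, the bijections $\sigma_1,\sigma_2$, and the two trivial-extension pair matchings, all subject to the coset-type $\rho$ constraint on the composition) is enumerated by a quantity depending only on $\lambda,\delta,\rho$; this independence follows from the transitive action of $\mathcal{B}_m^3$ on triples of subsets of $\mathbf{P}_m$ of given sizes, which acts on quadruples by $(b_1,b_2)\bullet\tilde\alpha_1$ and $(b_2,b_3)\bullet\tilde\alpha_2$ (preserving the coset-types of $\alpha_1,\alpha_2$ and of the composition by Corollary~\ref{compatibility}). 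Combined with $|A_{\rho,m}| = \binom{m}{|\rho|}^2\,N_\rho$, where $N_\rho$ is the number of bijections of coset-type $\rho$ between any two fixed sets of size $2|\rho|$, this yields $T_{\lambda\delta}^\rho(m) = \binom{m}{|\rho|}\,C_{\lambda\delta\rho}/N_\rho$, whose ratio between $m=n+1$ and $m=n$ is exactly $\binom{n+1}{|\rho|}/\binom{n}{|\rho|} = (n+1)/(n+1-|\rho|)$, as required.
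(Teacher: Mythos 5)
Your proof is correct, and while your first two reductions coincide with the paper's, the final counting step takes a genuinely different route. Like the paper, you reduce the homomorphism property to the ratio identity $c_{\lambda\delta}^{\rho}(n+1)/c_{\lambda\delta}^{\rho}(n)=(n+1)(n+1-|\rho|)/\bigl((n+1-|\lambda|)(n+1-|\delta|)\bigr)$, and you observe that $|E_{\alpha_1}^{\alpha_2}(m)|$ takes a common value on all contributing pairs, so that $c_{\lambda\delta}^{\rho}(m)$ is a pure count divided by $E_{\lambda\delta\rho}(m)$; your quadruple count equals the paper's $|H_{\lambda\delta}^{\rho}(m)|$, since the extension pair producing a fixed target is uniquely determined by $(\alpha_1,\alpha_2)$. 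Where you diverge is in evaluating the ratio of counts. The paper keeps the target $\alpha$ fixed, stratifies $H_{\lambda\delta}^{\rho}(m)$ into pieces $N_{\bm{u}}$ indexed by the quadruple of domains and codomains, proves a bijection lemma showing $|N_{\bm{u}}|$ depends only on $(u'_1,u_2)$, and then explicitly counts the $\binom{m}{|\lambda|}\binom{m-|\lambda|}{|\rho|-|\lambda|}\binom{|\lambda|}{|\lambda|+|\delta|-|\rho|}$ admissible choices of $(u_1,u'_2)$. You instead sum over all targets of coset-type $\rho$ and use the transitive action of $\mathcal{B}_m^3$ on the triple of ambient $2|\rho|$-element sets (which preserves all the defining conditions, by the lemma on $E_{\alpha_1}^{\alpha_2}$ and the compatibility corollary) to factor the total as $\binom{m}{|\rho|}^3C_{\lambda\delta\rho}$, then divide by $|A_{\rho,m}|=\binom{m}{|\rho|}^2N_\rho$. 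Your version is arguably cleaner: it avoids the $N_{\bm{u}}$-stratification and the delicate count $k_n$, replacing them by a single orbit-counting argument whose ingredients are already in the paper; the one point you should make fully explicit is that the fiber $C_{\lambda\delta\rho}$ over a triple is independent of $m$, which follows by choosing one and the same triple of subsets of $[2|\rho|]$ for every $m\geq|\rho|$. A minor terminological slip: the ratio identity must hold for all partitions $\rho$ appearing in Corollary 3.1, not only proper ones, since the basis $S_{\rho,n}$ is indexed by arbitrary partitions of size at most $n$; your argument nowhere uses properness, so nothing breaks.
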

Let $S_{\lambda,n+1}$ and $S_{\delta,n+1}$ where $|\lambda|\leq n+1$ and $|\delta|\leq n+1$ be two basis elements of $\mathcal{A}_{n+1}$. 

If $\lambda$ (resp. $\delta$) is a partition of $n+1$, then $\varphi_{n}(S_{\lambda,n+1})$ (resp. $\varphi_{n}(S_{\delta,n+1})$) is equal to zero, and by Corollary \ref{corollary 3.1} we have:
$$S_{\lambda,n+1}\pr S_{\delta,n+1}=\sum_{\rho \text{ partition} \atop {|\rho|=n+1 }}c_{\lambda\delta}^{\rho}(n+1)S_{\rho,n+1}.$$
Note that the size of all partitions $\rho$ in the sum index of this equation is $n+1$. By applying $\varphi_n$, we get: $$\varphi_{n}(S_{\lambda,n+1}\pr S_{\delta,n+1})=\sum_{\rho \text{ partition} \atop {|\rho|=n+1 }}c_{\lambda\delta}^{\rho}(n+1)\varphi_{n}(S_{\rho,n+1})=0.$$
Thus in this case we have $\varphi_{n}(S_{\lambda,n+1}\pr S_{\delta,n+1})=\varphi_{n}(S_{\lambda,n+1})\pr \varphi_{n}(S_{\delta,n+1})$. 

In the other case ($|\lambda|\leq n$ and $|\delta|\leq n$) we have by Corollary \ref{corollary 3.1}:
$$S_{\lambda,n+1}\pr S_{\delta,n+1}=\sum_{r\leq n+1\atop{\rho\vdash r}}c_{\lambda\delta}^{\rho}(n+1)
S_{\rho,n+1}.$$
This gives us the following equation after applying $\varphi_n$:
\begin{eqnarray*}
\varphi_n(S_{\lambda,n+1}\pr S_{\delta,n+1})&=&\varphi_n\left(\sum_{r\leq n+1\atop{\rho\vdash r}}c_{\lambda\delta}^{\rho}(n+1)
S_{\rho,n+1}\right)\\
&=&\sum_{r\leq n\atop{\rho\vdash r}}c_{\lambda\delta}^{\rho}(n+1)
\frac{n+1}{(n+1-|\rho|)}S_{\rho,n}.
\end{eqnarray*}
In the other hand, we have:
\begin{eqnarray*}
\varphi_n(S_{\lambda,n+1})\pr \varphi_n(S_{\delta,n+1})&=&\frac{n+1}{(n+1-|\lambda|)}S_{\lambda,n}\pr \frac{n+1}{(n+1-|\delta|)}S_{\delta,n}\\
&=&\frac{n+1}{(n+1-|\lambda|)}\frac{n+1}{(n+1-|\delta|)}\sum_{r\leq n\atop{\rho\vdash r}}c_{\lambda\delta}^{\rho}(n)S_{\rho,n}.
\end{eqnarray*}
Thus, $\varphi_n$ is a homomorphism if we have the following equality for any partition $\rho$ with size at most $n$:
$$\frac{c_{\lambda\delta}^{\rho}(n+1)}{c_{\lambda\delta}^{\rho}(n)}=\frac{\frac{n+1}{(n+1-|\lambda|)}\frac{n+1}{(n+1-|\delta|)}}{\frac{n+1}{(n+1-|\rho|)}}.$$
Let $\rho$ be a partition with size at most $n$ and $\alpha$ an element of $A_{\rho,n}$. We define $H_{\lambda\delta}^{\rho}(n)$ to be the following set:
$$\lbrace \big(\alpha_1,\alpha_2\big)\in A_{\lambda,n}\times A_{\delta,n}
\text{ such that there exists } (\widetilde{\alpha_1},\widetilde{\alpha_2})\in E_{\alpha_1}^{\alpha_2}(n)\text{ with }\alpha=\widetilde{\alpha_1}\circ \widetilde{\alpha_2}\rbrace.
$$
This set depends on $\alpha$ by definition. However, $\alpha$ does not appear in our notation. This should not be an issue, since $\alpha$ is fixed in the whole proof.

The coefficient $c_{\lambda\delta}^{\rho}(n)$ can be written as follows:
$$c_{\lambda\delta}^{\rho}(n)=\displaystyle{\sum_{(\alpha_1,\alpha_2)\in H_{\lambda\delta}^{\rho}(n)}}\frac{1}{|E_{\alpha_1}^{\alpha_2}(n)|}.$$
Similarly, we have:
$$c_{\lambda\delta}^{\rho}(n+1)=\displaystyle{\sum_{(\alpha_1,\alpha_2)\in H_{\lambda\delta}^{\rho}(n+1)}}\frac{1}{|E_{\alpha_1}^{\alpha_2}(n+1)|}.$$
By equation \eqref{cardinal de E}, if $(\alpha_1,\alpha_2)\in H_{\lambda\delta}^{\rho}(n)$, we have:
$$|E_{\alpha_1}^{\alpha_2}(n)|=2^{2|\rho|-|\lambda|-|\delta|}(n-|\lambda|)_{(|\rho|-|\lambda|)}(n-|\delta|)_{(|\rho|-|\delta|)}.$$
Similarly, if $(\alpha_1,\alpha_2)\in H_{\lambda\delta}^{\rho}(n+1)$, we have:
$$|E_{\alpha_1}^{\alpha_2}(n+1)|=2^{2|\rho|-|\lambda|-|\delta|}(n+1-|\lambda|)_{(|\rho|-|\lambda|)}(n+1-|\delta|)_{(|\rho|-|\delta|)}.$$
Thus, we get:
$$c_{\lambda\delta}^{\rho}(n)=\frac{|H_{\lambda\delta}^{\rho}(n)|}{2^{2|\rho|-|\lambda|-|\delta|}(n-|\lambda|)_{(|\rho|-|\lambda|)}(n-|\delta|)_{(|\rho|-|\delta|)}},$$
and
$$c_{\lambda\delta}^{\rho}(n+1)=\frac{|H_{\lambda\delta}^{\rho}(n+1)|}{2^{2|\rho|-|\lambda|-|\delta|}(n+1-|\lambda|)_{(|\rho|-|\lambda|)}(n+1-|\delta|)_{(|\rho|-|\delta|)}}.$$
This gives us after simplification:
$$\frac{c_{\lambda\delta}^{\rho}(n+1)}{c_{\lambda\delta}^{\rho}(n)}=\frac{|H_{\lambda\delta}^{\rho}(n+1)|}{|H_{\lambda\delta}^{\rho}(n)|}\cdot \frac{n+1-|\rho|}{n+1-|\lambda|}\cdot \frac{n+1-|\rho|}{n+1-|\delta|}$$
We will now evaluate the quotient $\frac{|H_{\lambda\delta}^{\rho}(n+1)|}{|H_{\lambda\delta}^{\rho}(n)|}$. Let $\bm{u}=(u_1, u'_1,u_2,u'_2)$ be an element of $\mathbf{P}_{n}^{4}$ such that:
\begin{eqnarray}
&&u_2\subseteq d,\label{condition 1}\\
&&u'_{1}\subseteq d',\label{condition 2}\\
&&|u_1|= |u'_1|=2|\lambda|,\label{condition 3}\\
&&|u_2|= |u'_2|=2|\delta|,\label{condition 4}\\
&&|u'_2\cup u_1|=2|\rho|.\label{condition 5}
\end{eqnarray} 
We introduce \begin{eqnarray*}
N_{\bm{u}}&=&\lbrace \big(h_1=(f_1,u_1,u'_1),h_2=(f_2,u_2,u'_2)\big)\in A_{\lambda,n}\times A_{\delta,n}\\
&&\text{such that there exists }(\widetilde{h_1},\widetilde{h_2})\in E_{h_1}^{h_2}(n) \text{ with } \alpha=\widetilde{h_1}\circ \widetilde{h_2}\rbrace.
\end{eqnarray*}
Its elements are represented on Figure \ref{fig:elements of Nu}. The set $H_{\lambda\delta}^{\rho}(n)$ is the disjoint union of all $N_{\bm{u}}$ with $\bm{u}$ satisfying the above conditions.
\begin{figure}[htbp]
\begin{center}
\begin{tikzpicture},fill opacity=0.5]
\draw[] (0,0) ellipse (1cm and 2cm)++(8,0) ellipse (1cm and 2cm);
\draw (0,0) node {$d$};
\draw (8,0) node {$d'$};
\draw[->] (1,0) to node [sloped,below] {$\sigma$} (7,0);
\draw[] (0,1) ellipse (.4cm and .8cm)++(8,0) ellipse (.4cm and .8cm);
\draw (0,1) node {$u_2$};
\draw (8,1) node {$u'_1$};
\draw[] (4,3) ellipse (.4cm and .8cm);
\draw[] (4,2) ellipse (.4cm and .8cm);
\draw (4,3) node {$u'_2$};
\draw (4,2) node {$u_1$};
\draw[->,dashed] (.4,1) to node [sloped,above] {$f_2$} (3.6,3);
\draw[->,dashed] (4.4,2) to node [sloped,above] {$f_1$} (7.6,1);
\end{tikzpicture}
\caption{Schematic representation of elements of $N_{\bm{u}}$ }
\label{fig:elements of Nu}
\end{center}
\end{figure}
\begin{lem}\label{bijection Nu and Nv}
Let $\bm{v}=(v_1, v'_1,v_2,v'_2)$ be an element of $\mathbf{P}_{n}^{4}$ satisfying conditions above. If $v'_1=u'_1$ and $v_2=u_2$, then there exists a bijection between $N_{\bm{u}}$ and $N_{\bm{v}}$.
\end{lem}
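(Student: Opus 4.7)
The plan is to construct an explicit bijection $\Phi\colon N_{\bm{u}} \to N_{\bm{v}}$ by transporting factorizations of $\alpha$ along a suitable pair-preserving bijection between the two intermediate sets $u_1 \cup u'_2$ and $v_1 \cup v'_2$.

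First, I would observe that by conditions \eqref{condition 3}--\eqref{condition 5} together with membership in $\mathbf{P}_n$, both $u_1 \cup u'_2$ and $v_1 \cup v'_2$ consist of exactly $|\rho|$ pairs $\rho(k)$, decomposed as $|\lambda|$ pairs inside $u_1$ (resp.\ $v_1$), $|\delta|$ pairs inside $u'_2$ (resp.\ $v'_2$), and $|\lambda|+|\delta|-|\rho|$ pairs inside $u_1\cap u'_2$ (resp.\ $v_1\cap v'_2$). Consequently one can choose a pair-preserving bijection
$$\pi \colon u_1 \cup u'_2 \longrightarrow v_1 \cup v'_2 \qquad \text{with} \qquad \pi(u_1)=v_1 \text{ and } \pi(u'_2)=v'_2$$
by selecting arbitrary pair-preserving bijections on each of the three pieces $u_1\cap u'_2$, $u_1\setminus u'_2$, $u'_2\setminus u_1$.

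With $\pi$ in hand, I would define $\Phi(h_1,h_2)=(g_1,g_2)$ via $g_2:=\pi\circ f_2$ and $g_1:=f_1\circ \pi^{-1}\big|_{v_1}$, so that $g_2\colon u_2=v_2\to v'_2$ and $g_1\colon v_1\to u'_1=v'_1$. Since $\pi$ sends pairs to pairs, relabelling the interior vertices of $\Gamma(f_2)$ via $\pi$ preserves all cycle lengths, hence $ct(g_2)=\delta$; symmetrically $ct(g_1)=\lambda$. For the factorization condition, given any $(\widetilde{h_1},\widetilde{h_2}) \in E_{h_1}^{h_2}(n)$ with $\widetilde{h_1}\circ \widetilde{h_2}=\alpha$, I would set $\widetilde{g_2}:=\pi\circ \widetilde{h_2}$ and $\widetilde{g_1}:=\widetilde{h_1}\circ \pi^{-1}$; then $\widetilde{g_1}\circ \widetilde{g_2}=\alpha$, and the pair-preserving nature of $\pi$ ensures that these stay trivial extensions of $g_i$, so $(\widetilde{g_1},\widetilde{g_2})\in E_{g_1}^{g_2}(n)$ and $(g_1,g_2)\in N_{\bm{v}}$. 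The inverse map is the symmetric construction using $\pi^{-1}$, which visibly undoes $\Phi$.

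The main point requiring care is verifying that $\widetilde{g_i}$ satisfies the coset-type condition of Definition \ref{definition 3.2}, namely $ct(\widetilde{g_i})=ct(g_i)\cup(1^{|\widetilde{d_i}\setminus d_i|/2})$. This reduces to the elementary fact that relabelling interior (or exterior) vertex labels of the circular graph by a pair-preserving permutation leaves all cycle lengths unchanged, but unpacking the definitions and checking that the added part of $\widetilde{g_i}$ contributes only trivial $1$-cycles is where essentially all of the bookkeeping lies.
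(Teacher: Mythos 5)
Your proof is correct and is essentially the paper's own argument: the paper chooses a permutation $b\in\mathcal{B}_n$ with $b(u_1)=v_1$ and $b(u'_2)=v'_2$ and maps $(h_1,h_2)\mapsto((id,b)\bullet h_1,(b,id)\bullet h_2)$, and your pair-preserving bijection $\pi$ is exactly the restriction of such a $b$ to $u_1\cup u'_2$, with the same cardinality count justifying its existence and the same transport of the factorization $\widetilde{h_1}\circ\widetilde{h_2}=\alpha$. The only cosmetic difference is that the paper packages the coset-type and extension bookkeeping into the already-established $\mathcal{B}_n\times\mathcal{B}_n$-action (Lemma \ref{compatibility of E}), whereas you verify the invariance of cycle lengths under pair-preserving relabelling directly.
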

\begin{proof}
We take any permutation $b\in \mathcal{B}_n$, such that $b(u_1)=v_1$ and $b(u'_2)=v'_2$. Such a permutation exists because $|u_1|=|v_1|$, $|u'_2|=|v'_2|$ and $|u'_2\cup u_1|=|v'_2\cup v_1|$. We associate to a pair $(h_1,h_2)$  in $N_{\bm{u}}$ the pair $((id,b)\bullet h_1,(b,id)\bullet h_2)$. We check that the image lies in $N_{\bm{v}}$:
\begin{eqnarray*}
\big((id,b)\bullet h_1,(b,id)\bullet h_2\big)&=&\big((f_1b^{-1},b(u_1),u'_1),(bf_2,u_2,b(u'_2))\big)\\
&=&\big((f_1b^{-1},v_1,u^{'}_1),(bf_2,u_2,v'_2)\big)\in A_{\lambda,n}\times A_{\delta,n}.
\end{eqnarray*}
We can check easily that $\big((id,b)\bullet \widetilde{h_1},(b,id)\bullet \widetilde{h_2}\big)\in E_{(id,b)\bullet h_1}^{(b,id)\bullet h_2}(n)$, and we have:
\begin{eqnarray*}
(id,b)\bullet \widetilde{h_1}\circ (b,id)\bullet \widetilde{h_2}&=&(\widetilde{f_1}b^{-1},b(\widetilde{u_1}),\widetilde{u'_1})\circ (b\widetilde{f_2},\widetilde{u_2},b(\widetilde{u'_2}))\\
&=&(\widetilde{f_1}b^{-1}b\widetilde{f_2},\widetilde{u_2},\widetilde{u'_1})\\
&=&\widetilde{h_1}\circ \widetilde{h_2}\\
&=&\alpha .
\end{eqnarray*}
It is then easy to check that this defines a bijection between $N_{\bm{u}}$ and $N_{\bm{v}}$. Details are the same as in Lemma \ref{compatibility of E}.
\end{proof}
Therefore the cardinality of $N_{\bm{u}}$ depends only on $u'_1$ and $u_2$. We denote it by $f(u'_1,u_2)$. If we denote by $\bm{U}$ the set of vectors $\bm{u}\in \mathbf{P}_n^4$ satisfying conditions \eqref{condition 1} to \eqref{condition 5}, the set $H_{\lambda\delta}^{\rho}(n)$ can be written as follows: $$\displaystyle{H_{\lambda\delta}^{\rho}(n)=\bigsqcup_{\bm{u}\in \bm{U}} N_{\bm{u}}}.$$
Using Lemma \ref{bijection Nu and Nv}, we obtain:
$$|H_{\lambda\delta}^{\rho}(n)|=\sum_{\bm{u}\in \bm{U}}|N_{\bm{u}}|=\sum_{u'_1,u_2}\sum_{u_1,u'_2}f(u'_1,u_2).$$
The first (resp. second) summation indexes are vectors $u'_1$ and $u_2$ (resp. $u_1$ and $u'_2$) satisfying conditions \eqref{condition 1} to \eqref{condition 4} (resp. \eqref{condition 3} to \eqref{condition 5}).
Since $N_{\bm{u}}$ depends only on $u'_1$ and $u_2$, we get:
$$|H_{\lambda\delta}^{\rho}(n)|=\sum_{u'_1,u_2}f(u'_1,u_2)k_n,$$ 
where $k_n$ is the number of possible choices of vectors $u_1$ and $u'_2$ satisfying conditions \eqref{condition 3} to \eqref{condition 5}. There are $\begin{pmatrix}
n\\
|\lambda|
\end{pmatrix}$ sets $u_1\in \mathbf{P}_n$ that fulfill \eqref{condition 3}. Once $u_1$ is chosen, it remains $\begin{pmatrix}
|\lambda|\\
|\lambda|+|\delta|-|\rho|
\end{pmatrix}\cdot \begin{pmatrix}
n-|\lambda|\\
|\rho|-|\lambda|
\end{pmatrix}$ ways to choose $u'_2$ with conditions \eqref{condition 4} and \eqref{condition 5}. The first binomial is the number of possible choices of $u_1\cap u'_2$ and the second one is the number of possible choices of $u'_2\setminus u_1$. Then, we have:
$$k_n=\begin{pmatrix}
n\\
|\lambda|
\end{pmatrix}\cdot \begin{pmatrix}
n-|\lambda|\\
|\rho|-|\lambda|
\end{pmatrix}\cdot \begin{pmatrix}
|\lambda|\\
|\lambda|+|\delta|-|\rho|
\end{pmatrix}.$$
Thus, the cardinality of $H_{\lambda\delta}^{\rho}(n)$ is:
$$|H_{\lambda\delta}^{\rho}(n)|=\begin{pmatrix}
n\\
|\lambda|
\end{pmatrix}\cdot \begin{pmatrix}
n-|\lambda|\\
|\rho|-|\lambda|
\end{pmatrix}\cdot \begin{pmatrix}
|\lambda|\\
|\lambda|+|\delta|-|\rho|
\end{pmatrix}\sum_{u'_1,u_2}f(u'_1,u_2).$$
The summation index does not depend on $n$ because $u'_1$ and $u_2$ should fulfill conditions \eqref{condition 1} and \eqref{condition 2}.
Similarly, we obtain:
$$|H_{\lambda\delta}^{\rho}(n+1)|=\begin{pmatrix}
n+1\\
|\lambda|
\end{pmatrix}\cdot \begin{pmatrix}
n+1-|\lambda|\\
|\rho|-|\lambda|
\end{pmatrix}\cdot \begin{pmatrix}
|\lambda|\\
|\lambda|+|\delta|-|\rho|
\end{pmatrix}\sum_{u'_1,u_2}f(u'_1,u_2),$$
which gives us:
$$\frac{|H_{\lambda\delta}^{\rho}(n+1)|}{|H_{\lambda\delta}^{\rho}(n)|}=\frac{\begin{pmatrix}
n+1\\
|\delta|
\end{pmatrix}\begin{pmatrix}
n+1-|\delta|\\
|\rho|-|\delta|
\end{pmatrix}}{\begin{pmatrix}
n\\
|\delta|
\end{pmatrix}\begin{pmatrix}
n-|\delta|\\
|\rho|-|\delta|
\end{pmatrix}}=\frac{n+1}{n+1-|\rho|}.$$
Thus, we have:
\begin{eqnarray}\label{formula of c}
\frac{c_{\lambda\delta}^{\rho}(n+1)}{c_{\lambda\delta}^{\rho}(n)}&=&\frac{n+1}{n+1-|\rho|}\cdot \frac{n+1-|\rho|}{n+1-|\lambda|}\cdot \frac{n+1-|\rho|}{n+1-|\delta|}\\
&=&\frac{\frac{n+1}{(n+1-|\lambda|)}\frac{n+1}{(n+1-|\delta|)}}{\frac{n+1}{(n+1-|\rho|)}}. \nonumber
\end{eqnarray}
This proves that $\varphi_n$ is a homomorphism of algebras.
\subsection{Projective limits}
In this paragraph, we consider the projective limit $\mathcal{A}_\infty$ of the sequence $(\mathcal{A}_n)$. We prove in Proposition \ref{proposition 3.4} that every element of $\mathcal{A}_\infty$ is written in a unique way as infinite linear combination of elements indexed by partitions.\\

First, from equation \eqref{formula of c}, we can get the following Corollary:
\begin{cor}\label{c}
Let $\lambda$, $\delta$ and $\rho$ be three partitions such that $$\max{(|\lambda|,|\delta|)}\leq |\rho|\leq |\lambda|+|\delta|.$$ For every $n\geq |\rho|$, we have:
$$c_{\lambda\delta}^{\rho}(n)=\frac{c_{\lambda\delta}^{\rho}(|\rho|)}{\begin{pmatrix}
|\rho|\\
|\lambda|
\end{pmatrix}\begin{pmatrix}
|\rho|\\
|\delta|
\end{pmatrix}}\cdot \frac{\begin{pmatrix}
n\\
|\lambda|
\end{pmatrix}\begin{pmatrix}
n\\
|\delta|
\end{pmatrix}}{\begin{pmatrix}
n\\
|\rho|
\end{pmatrix}}.$$
\end{cor}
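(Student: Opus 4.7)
The corollary is essentially a packaged telescoping of the recursion \eqref{formula of c} established in the preceding subsection. My plan is to prove the formula by a straightforward induction on $n \geq |\rho|$.

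For the base case $n = |\rho|$, first note that $c_{\lambda\delta}^{\rho}(|\rho|)$ is well-defined by Corollary \ref{corollary 3.1}, since the hypothesis $\max(|\lambda|,|\delta|) \leq |\rho|$ guarantees that the basis elements $S_{\lambda,|\rho|}$, $S_{\delta,|\rho|}$, $S_{\rho,|\rho|}$ all exist in $\mathcal{A}_{|\rho|}$. Substituting $n = |\rho|$ into the right-hand side of the claim and using $\binom{|\rho|}{|\rho|} = 1$ yields $c_{\lambda\delta}^{\rho}(|\rho|)$, matching the left-hand side trivially.

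For the inductive step, denote by $F(n)$ the right-hand side of the claimed identity. Since the prefactor $c_{\lambda\delta}^{\rho}(|\rho|)/[\binom{|\rho|}{|\lambda|}\binom{|\rho|}{|\delta|}]$ is independent of $n$, the task reduces to checking that $F(n)$ satisfies the same recursion as $c_{\lambda\delta}^{\rho}(n)$, i.e., to verifying the algebraic identity
$$\frac{\binom{n+1}{|\lambda|}\binom{n+1}{|\delta|}/\binom{n+1}{|\rho|}}{\binom{n}{|\lambda|}\binom{n}{|\delta|}/\binom{n}{|\rho|}} = \frac{(n+1)(n+1-|\rho|)}{(n+1-|\lambda|)(n+1-|\delta|)}.$$
This follows from three applications of the elementary relation $\binom{n+1}{k}/\binom{n}{k} = (n+1)/(n+1-k)$, used in turn for $k = |\lambda|$, $|\delta|$, and $|\rho|$. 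Rearranging gives exactly the ratio on the right-hand side of \eqref{formula of c}, so $F(n+1)/F(n) = c_{\lambda\delta}^{\rho}(n+1)/c_{\lambda\delta}^{\rho}(n)$, and the induction closes.

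There is no real obstacle in this argument: all substantive combinatorial work has already been carried out in the derivation of \eqref{formula of c} via the sets $H_{\lambda\delta}^{\rho}(n)$ and $N_{\bm{u}}$. The only mild care needed is to confirm that $|\rho|$ does not exceed $n$ in the induction range and that the constraints $\max(|\lambda|,|\delta|)\leq|\rho|\leq|\lambda|+|\delta|$ remain in force at each step, both of which are automatic from the hypothesis and from the vanishing of $c_{\lambda\delta}^{\rho}(n)$ outside this range (Corollary \ref{corollary 3.1}).
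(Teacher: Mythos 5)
Your proposal is correct and is essentially the paper's own argument: the paper also proves the corollary by induction on $n$ starting from the trivial base case $n=|\rho|$ and multiplying the induction hypothesis by the ratio from equation \eqref{formula of c}, which is exactly the binomial-coefficient identity you verify. No further comment is needed.
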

\begin{proof}
We proceed by induction on $n$. For $n = |\rho|$, we have the equality. Assume we have the equality for some $n\geq |\rho|$ and let us prove it for $n+1$.
By equation \eqref{formula of c}, we have:$$\frac{c_{\lambda\delta}^{\rho}(n+1)}{c_{\lambda\delta}^{\rho}(n)}=\frac{\frac{n+1}{(n+1-|\lambda|)}\frac{n+1}{(n+1-|\delta|)}}{\frac{n+1}{(n+1-|\rho|)}}.$$ 
This gives us the following equality, using the induction hypothesis :
\begin{eqnarray*}
c_{\lambda\delta}^{\rho}(n+1)&=&\frac{c_{\lambda\delta}^{\rho}(|\rho|)}{\begin{pmatrix}
|\rho|\\
|\lambda|
\end{pmatrix} \begin{pmatrix}
|\rho|\\
|\delta|
\end{pmatrix}}\cdot\frac{\begin{pmatrix}
n\\
|\lambda|
\end{pmatrix}\begin{pmatrix}
n\\
|\delta|
\end{pmatrix}}{\begin{pmatrix}
n\\
|\rho|
\end{pmatrix}}\cdot \frac{\frac{n+1}{(n+1-|\lambda|)}\frac{n+1}{(n+1-|\delta|)}}{\frac{n+1}{(n+1-|\rho|)}}\\
&=&\frac{c_{\lambda\delta}^{\rho}(|\rho|)}{\begin{pmatrix}
|\rho|\\
|\lambda|
\end{pmatrix}\begin{pmatrix}
|\rho|\\
|\delta|
\end{pmatrix}}\cdot \frac{\begin{pmatrix}
n+1\\
|\lambda|
\end{pmatrix}\begin{pmatrix}
n+1\\
|\delta|
\end{pmatrix}}{\begin{pmatrix}
n+1\\
|\rho|
\end{pmatrix}}.
\end{eqnarray*}
\end{proof}
Let $\mathcal{A}_{\infty}$ be the projective limit of $(\mathcal{A}_n,\varphi_n)$:
$$\mathcal{A}_{\infty}=\lbrace (a_n)_{n\geq 1} \mid \text{for every } n\geq 1, a_n\in \mathcal{A}_n\text{ and } \varphi_n(a_{n+1})=a_n\rbrace.$$
\begin{lem}\label{Projective limit}An element $a=(a_n)_{n\geq 1}$ is in $\mathcal{A}_{\infty}$ if and only if there exists a family $(x^{a}_{\lambda})_{\lambda~partition}$ of elements of $\mathbb{C}$ such that for every $n\geq 1$, $a_n=\displaystyle{\sum_{\lambda \text{ partition}\atop{|\lambda|\leq n}}\frac{x^{a}_{\lambda}}{\begin{pmatrix}
n\\
|\lambda|
\end{pmatrix}}{S}_{\lambda,n}}$.
\end{lem}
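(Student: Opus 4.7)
The plan is to verify both directions by comparing coefficients against the basis $(S_{\lambda,n})_{|\lambda|\leq n}$ of $\mathcal{A}_n$ given by Proposition \ref{basis of An}, and to reduce the compatibility condition $\varphi_{n}(a_{n+1})=a_{n}$ to a single scalar recursion on the coefficients of $S_{\lambda,n}$.

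\emph{Forward direction.} Given a family $(x^{a}_{\lambda})_{\lambda}$, define $a_{n}=\sum_{|\lambda|\leq n}\frac{x^{a}_{\lambda}}{\binom{n}{|\lambda|}}S_{\lambda,n}\in \mathcal{A}_{n}$. By the definition of $\varphi_{n}$, the terms with $|\lambda|=n+1$ are killed, and applying $\varphi_{n}$ to $a_{n+1}$ yields
$$\varphi_{n}(a_{n+1})=\sum_{|\lambda|\leq n}\frac{x^{a}_{\lambda}}{\binom{n+1}{|\lambda|}}\cdot\frac{n+1}{n+1-|\lambda|}\,S_{\lambda,n}.$$
The routine identity $\frac{1}{\binom{n+1}{|\lambda|}}\cdot\frac{n+1}{n+1-|\lambda|}=\frac{1}{\binom{n}{|\lambda|}}$ immediately gives $\varphi_{n}(a_{n+1})=a_{n}$, so $a\in\mathcal{A}_{\infty}$.

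\emph{Reverse direction.} Conversely, let $a=(a_{n})_{n\geq 1}\in \mathcal{A}_{\infty}$. For each $n$, expand $a_{n}=\sum_{|\lambda|\leq n}c_{\lambda,n}\,S_{\lambda,n}$ in the basis of Proposition \ref{basis of An}. Comparing coefficients in the relation $\varphi_{n}(a_{n+1})=a_{n}$ and using $\varphi_{n}(S_{\lambda,n+1})=\frac{n+1}{n+1-|\lambda|}S_{\lambda,n}$ for $|\lambda|\leq n$, I obtain the recursion
$$c_{\lambda,n+1}=\frac{n+1-|\lambda|}{n+1}\,c_{\lambda,n}\qquad\text{for every }n\geq |\lambda|.$$
Set $x^{a}_{\lambda}:=c_{\lambda,|\lambda|}$ (this is legitimate because $S_{\lambda,n}$ only exists for $n\geq|\lambda|$). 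A straightforward induction on $n\geq|\lambda|$, together with the identity $\binom{n+1}{|\lambda|}=\frac{n+1}{n+1-|\lambda|}\binom{n}{|\lambda|}$, yields $c_{\lambda,n}=\frac{x^{a}_{\lambda}}{\binom{n}{|\lambda|}}$ for every $n\geq|\lambda|$, which is the desired expansion.

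\emph{Uniqueness.} Finally, uniqueness of the family $(x^{a}_{\lambda})$ is forced: since $\binom{|\lambda|}{|\lambda|}=1$, we must have $x^{a}_{\lambda}=c_{\lambda,|\lambda|}$, and the coefficients $c_{\lambda,n}$ are themselves uniquely determined because $(S_{\lambda,n})_{|\lambda|\leq n}$ is a basis of $\mathcal{A}_{n}$. There is no real obstacle here; the only point deserving attention is checking that the forward construction makes sense at every level $n$ (the sum has finitely many terms since it ranges over $|\lambda|\leq n$) and that both directions pivot on the same elementary binomial identity relating $\binom{n+1}{|\lambda|}$ and $\binom{n}{|\lambda|}$.
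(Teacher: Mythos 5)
Your proof is correct and follows essentially the same route as the paper's: expand $a_n$ in the basis $(S_{\lambda,n})_{|\lambda|\leq n}$, translate $\varphi_n(a_{n+1})=a_n$ into the coefficient recursion $c_{\lambda,n+1}=\frac{n+1-|\lambda|}{n+1}c_{\lambda,n}$, and induct to get $c_{\lambda,n}=x^a_\lambda/\binom{n}{|\lambda|}$ with $x^a_\lambda=c_{\lambda,|\lambda|}$. The only difference is that you spell out the forward direction and uniqueness, which the paper dismisses as obvious.
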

\begin{proof}\
Let $a=(a_n)_{n\geq 1}$ be a sequence in $\mathcal{A}_{\infty}$, $a_n\in \mathcal{A}_n$ for every $n\geq 1$. By Proposition \ref{basis of An}, the elements $(S_{\lambda,n})_{\lambda\vdash r\leq n}$ form a basis of $\mathcal{A}_n$, thus for every $n\geq 1$ and every partition $\lambda$ such as $|\lambda|\leq n$, there exists a scalar $a_\lambda(n)\in \mathbb{C}$ such that \begin{displaymath}
a_n=\sum_{\lambda \text{ partition}\atop {|\lambda|\leq n}}a_{\lambda}(n)S_{\lambda,n}.
\end{displaymath}
The condition $\varphi_n(a_{n+1})=a_n$ can be written as follows:
$$\varphi_n\left(\sum_{\lambda \text{ partition}\atop{|\lambda|\leq n+1}}a_{\lambda}(n+1)S_{\lambda,n+1}\right)=\sum_{\lambda \text{ partition}\atop{|\lambda|\leq n}}a_{\lambda}(n)S_{\lambda,n}.$$
Using the definition of $\varphi_n$, we can simplify this equality to obtain:
$$\sum_{\lambda \text{ partition}\atop{|\lambda|\leq n}}a_{\lambda}(n+1)\frac{n+1}{n+1-|\lambda|}S_{\lambda,n}=\sum_{\lambda \text{ partition}\atop{|\lambda|\leq n}}a_{\lambda}(n)S_{\lambda,n}.$$
By considering the coefficients of $S_{\lambda,n}$ we get that for every partition $\lambda$ such that $|\lambda|\leq n$, we have:
$$\frac{a_{\lambda}(n+1)}{a_{\lambda}(n)}=\frac{n+1-|\lambda|}{n+1}.$$
After an immediate induction, we get :
$$a_{\lambda}(n)=\frac{a_{\lambda}(|\lambda|)}{\begin{pmatrix}
n\\
|\lambda|
\end{pmatrix}}.$$ 
Set $x^a_\lambda=a_\lambda(|\lambda|)$, this proves the "only if" statement. Converse is obvious.
\end{proof}
For every partition $\lambda$, we define the sequence $T_{\lambda}$ as follows:
$$T_{\lambda}=(T_{\lambda})_{n\geq 1}=\left\{
\begin{array}{ll}
  0 & \qquad \mathrm{if}\quad n< |\lambda|,\\\\
  \frac{1}{\begin{pmatrix}
n\\
|\lambda|
\end{pmatrix}}{S}_{\lambda,n} & \qquad \mathrm{if}\quad  n\geq |\lambda| .\\
 \end{array}
 \right.$$
From Lemma \ref{Projective limit}, we obtain directly the following proposition:
\begin{prop}\label{proposition 3.4}
 Every element $a \in \mathcal{A}_\infty$ is written in a unique way as infinite linear combination of elements $T_{\lambda}$.
 \end{prop}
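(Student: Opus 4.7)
The proof will be essentially a direct translation of Lemma~\ref{Projective limit} in the notation of the sequences $T_\lambda$. The plan is to observe that, at every fixed level $n$, the defining formula of Lemma~\ref{Projective limit} can be rewritten as a sum involving the $(T_\lambda)_n$, and then to promote this level-by-level identity to an identity in the projective limit.

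First I would unwind the definitions. Fix $a = (a_n)_{n\geq 1} \in \mathcal{A}_\infty$. By Lemma~\ref{Projective limit}, there is a family $(x^a_\lambda)_\lambda$ of complex numbers such that, for every $n \geq 1$,
\[
a_n \;=\; \sum_{\lambda \text{ partition},\ |\lambda|\leq n} \frac{x^a_\lambda}{\binom{n}{|\lambda|}}\, S_{\lambda,n}.
\]
By the very definition of $T_\lambda$, the summand $\frac{1}{\binom{n}{|\lambda|}} S_{\lambda,n}$ is exactly the $n$-th component $(T_\lambda)_n$ when $|\lambda|\leq n$, while $(T_\lambda)_n = 0$ whenever $|\lambda| > n$. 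Consequently, the sum on the right may be harmlessly extended to run over \emph{all} partitions:
\[
a_n \;=\; \sum_{\lambda \text{ partition}} x^a_\lambda \, (T_\lambda)_n.
\]
This identity, valid simultaneously for every $n$, is precisely what it means to write $a = \sum_\lambda x^a_\lambda\, T_\lambda$ in $\mathcal{A}_\infty$, the key point being that at each level $n$ only finitely many terms (those with $|\lambda|\leq n$) are nonzero, so no convergence issue arises.

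For uniqueness, I would assume two expressions $a = \sum_\lambda y_\lambda T_\lambda = \sum_\lambda z_\lambda T_\lambda$ and fix an arbitrary partition $\mu$. Evaluating both sides at level $n = |\mu|$ and extracting the coefficient of $S_{\mu,n}$ using the linear independence of the basis $(S_{\lambda,n})_{|\lambda|\leq n}$ of $\mathcal{A}_n$ given by Proposition~\ref{basis of An}, we get $y_\mu = z_\mu$. Since $\mu$ was arbitrary, the coefficients coincide, which yields uniqueness.

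There is no real obstacle: the hard work has already been done in Lemma~\ref{Projective limit}, which exhibits the rescaled basis $\frac{1}{\binom{n}{|\lambda|}} S_{\lambda,n}$ as the correct one to stabilize under the projection maps $\varphi_n$. The only point that requires a sentence of justification is why the possibly infinite sum $\sum_\lambda x_\lambda^a T_\lambda$ makes sense in $\mathcal{A}_\infty$, and this is settled by the vanishing $(T_\lambda)_n = 0$ for $|\lambda|>n$, which truncates each level to a finite sum.
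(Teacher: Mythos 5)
Your proposal is correct and follows exactly the route the paper takes: the paper states that Proposition~\ref{proposition 3.4} is obtained directly from Lemma~\ref{Projective limit}, and your argument simply spells out that deduction (rewriting the rescaled sum in terms of the $(T_\lambda)_n$ and getting uniqueness from the linear independence of the $S_{\lambda,n}$ via Proposition~\ref{basis of An}). The extra care you take about convergence at each level and about extracting coefficients at $n=|\mu|$ is exactly the implicit content of the paper's one-line justification.
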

This proposition shows that the algebra $\mathcal{A}_\infty$ satisfies the second property required in  Section \ref{section 2.5}. In particular, $T_{\lambda}\pr T_{\delta}$ writes as linear combination of elements $T_\rho$. We can be more precise.
\begin{cor}\label{corollary 3.2}
Let $\lambda$ and $\delta$ be two partitions, there exist unique constants $b_{\lambda\delta}^{\rho}$ such that:
$$T_{\lambda}\pr T_{\delta}=\sum_{\rho \text{ partition } \atop {\max{(|\lambda|,|\delta|)}\leq |\rho|\leq |\lambda|+|\delta|}}b_{\lambda\delta}^{\rho}T_{\rho}.$$
Moreover $b_{\lambda\delta}^{\rho}=\frac{c_{\lambda\delta}^{\rho}(|\rho|)}{\begin{pmatrix}
|\rho|\\
|\lambda|
\end{pmatrix}\begin{pmatrix}
|\rho|\\
|\delta|
\end{pmatrix}}$. In particular, it is a non-negative rational number.
\end{cor}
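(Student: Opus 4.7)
The plan is to verify the claim componentwise in the projective limit. Uniqueness of the $b_{\lambda\delta}^{\rho}$ is immediate from Proposition \ref{proposition 3.4}, so the task is just to exhibit the coefficients. I would fix $n\geq |\lambda|+|\delta|$; then every partition $\rho$ that could appear in $S_{\lambda,n}\pr S_{\delta,n}$ satisfies $|\rho|\leq n$, so the upper truncation $|\rho|\leq n$ in Corollary \ref{corollary 3.1} is inactive, and simultaneously the hypothesis $n\geq|\rho|$ of Corollary \ref{c} is met for every term.

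By the definition of $T_\lambda,T_\delta$ and Corollary \ref{corollary 3.1},
\[
(T_\lambda\pr T_\delta)_n=\frac{1}{\binom{n}{|\lambda|}\binom{n}{|\delta|}}S_{\lambda,n}\pr S_{\delta,n}=\sum_{\rho}\frac{c_{\lambda\delta}^{\rho}(n)}{\binom{n}{|\lambda|}\binom{n}{|\delta|}}\,S_{\rho,n},
\]
where $\rho$ ranges over partitions with $\max(|\lambda|,|\delta|)\leq|\rho|\leq|\lambda|+|\delta|$. Substituting the explicit expression of Corollary \ref{c} for $c_{\lambda\delta}^{\rho}(n)$ and using $S_{\rho,n}=\binom{n}{|\rho|}(T_\rho)_n$, I obtain
\[
(T_\lambda\pr T_\delta)_n=\sum_\rho\frac{c_{\lambda\delta}^{\rho}(|\rho|)}{\binom{|\rho|}{|\lambda|}\binom{|\rho|}{|\delta|}}(T_\rho)_n.
\]
Since this holds for all sufficiently large $n$ and both sides already lie in $\mathcal{A}_\infty$, the identity lifts to $\mathcal{A}_\infty$ by uniqueness in Proposition \ref{proposition 3.4}, yielding exactly $b_{\lambda\delta}^{\rho}=c_{\lambda\delta}^{\rho}(|\rho|)/\bigl(\binom{|\rho|}{|\lambda|}\binom{|\rho|}{|\delta|}\bigr)$ and the announced index range.

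For non-negativity, I invoke the combinatorial expression for $c_{\lambda\delta}^{\rho}(n)$ already established in the proof of the previous subsection, namely $c_{\lambda\delta}^{\rho}(n)=|H_{\lambda\delta}^{\rho}(n)|/|E_{\alpha_1}^{\alpha_2}(n)|$, where the denominator, computed in \eqref{cardinal de E}, is a positive integer depending only on $|\lambda|$, $|\delta|$, $|\rho|$ and $n$. Specialising to $n=|\rho|$ shows that $c_{\lambda\delta}^{\rho}(|\rho|)$ is a non-negative rational, and dividing by the positive integer $\binom{|\rho|}{|\lambda|}\binom{|\rho|}{|\delta|}$ preserves this.

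The only real obstacle is bookkeeping: guaranteeing that the sum range in Corollary \ref{corollary 3.1} is not artificially shrunk by the truncation $|\rho|\leq n$, and that Corollary \ref{c} can be applied term by term. Choosing $n\geq|\lambda|+|\delta|$ from the outset removes both worries, after which everything reduces to routine substitution.
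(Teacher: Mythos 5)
Your proposal is correct and follows essentially the same route as the paper: expand $(T_\lambda\pr T_\delta)_n$ via Corollary \ref{corollary 3.1}, substitute the closed form of Corollary \ref{c}, and identify the result with $\bigl(\sum_\rho b_{\lambda\delta}^{\rho}T_\rho\bigr)_n$, with uniqueness coming from Proposition \ref{proposition 3.4}. The only cosmetic difference is that you restrict to $n\geq|\lambda|+|\delta|$ and then lift, whereas the paper computes directly for every $n\geq\max(|\lambda|,|\delta|)$ (the terms with $|\rho|>n$ contributing $0$ since $(T_\rho)_n=0$); both are valid, and your explicit justification of non-negativity via $c_{\lambda\delta}^{\rho}(n)=|H_{\lambda\delta}^{\rho}(n)|/|E_{\alpha_1}^{\alpha_2}(n)|$ is consistent with what the paper establishes earlier.
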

\begin{proof} By Proposition \ref{proposition 3.4}, $T_{\lambda}\pr T_{\delta}$ writes as linear combination of elements $T_\rho$. 
$$T_{\lambda}\pr T_{\delta}=\sum_{\rho \text{ partition }}b_{\lambda\delta}^{\rho}T_{\rho}.$$
It remains to prove how we get the conditions about the size of partitions $\rho$ that appear in the sum index and the formula for $b_{\lambda\delta}^{\rho}$. \\If $n< \max{(|\lambda|,|\delta|)}$, we have:
$$(T_{\lambda}\pr T_{\delta})_n=0.$$
Let $n\geq \max{(|\lambda|,|\delta|)}$, we use Corollary \ref{corollary 3.1} and Corollary \ref{c} to get: 
\begin{eqnarray*}
(T_{\lambda}\pr T_{\delta})_n&=&\frac{1}{\begin{pmatrix}
n\\
|\lambda|
\end{pmatrix}}{S}_{\lambda,n}\pr \frac{1}{\begin{pmatrix}
n\\
|\delta|
\end{pmatrix}}{S}_{\delta,n}\\
&=&\sum_{\rho \text{ partition}\atop{\max {(|\lambda|,|\delta|)}\leq |\rho| \leq \min {(|\lambda|+|\delta|,n )}}}\frac{c_{\lambda\delta}^{\rho}(|\rho|)}{\begin{pmatrix}
|\rho|\\
|\lambda|
\end{pmatrix}\begin{pmatrix}
|\rho|\\
|\delta|
\end{pmatrix}\begin{pmatrix}
n\\
|\rho|
\end{pmatrix}}S_{\rho,n}\\
&=&\left(\sum_{\rho \text{ partition}\atop{\max {(|\lambda|,|\delta|)}\leq |\rho| \leq |\lambda|+|\delta|}}\frac{c_{\lambda\delta}^{\rho}(|\rho|)}{\begin{pmatrix}
|\rho|\\
|\lambda|
\end{pmatrix}\begin{pmatrix}
|\rho|\\
|\delta|
\end{pmatrix}}T_{\rho}\right)_n
\end{eqnarray*}
Comparing both expressions for $T_{\lambda}\pr T_{\delta}$, this proves our proposition.
\end{proof}
\begin{ex}\label{example T2*T2}
We compute in this example the product $T_{(2)}\pr T_{(2)}.$ Using Corollary \ref{corollary 3.2}, we can write
$\displaystyle{T_{(2)}\pr T_{(2)}=\sum_{\rho \text{ partition } \atop {2\leq |\rho|\leq 4}}b_{(2)(2)}^{\rho}T_{\rho}}$, which gives us:
\begin{eqnarray*}
T_{(2)}\pr T_{(2)}&=& ~b_{(2)(2)}^{(1^2)}T_{(1^2)}+b_{(2)(2)}^{(1^3)}T_{(1^3)}+b_{(2)(2)}^{(1^4)}T_{(1^4)}\\
&& +b_{(2)(2)}^{(2)}T_{(2)}+b_{(2)(2)}^{(2,1)}T_{(2,1)}+b_{(2)(2)}^{(2,1^2)}T_{(2,1^2)}+b_{(2)(2)}^{(2^2)}T_{(2^2)}\\
&& +b_{(2)(2)}^{(3)}T_{(3)}+b_{(2)(2)}^{(3,1)}T_{(3,1)}\\
&& +b_{(2)(2)}^{(4)}T_{(4)}.
\end{eqnarray*}
The formula for $b_{\lambda\delta}^{\rho}$ given in Corollary \ref{corollary 3.2} shows that these elements can be computed using the product of $S_{(2),|\lambda|+|\delta|}\pr S_{(2),|\lambda|+|\delta|}$ in $\mathcal{A}_{|\lambda|+|\delta|}$, which is $\mathcal{A}_4$ in our case.\\
We have implemented the algebra $\mathcal{A}_n$ in \cite{Sage} and got the following equation for the product $S_{(2),4}*S_{(2),4}$ in $\mathcal{A}_4$:
$$S_{(2),4}*S_{(2),4}=96 S_{(1^2),4}+48S_{(2),4}+36S_{(3),4}+12S_{(2^2),4}.$$
Using the formulas for $c_{\lambda\delta}^\rho(|\rho|)$ and $b_{\lambda\delta}^{\rho}$ given in Corollary \ref{c} and \ref{corollary 3.2}, we obtain:
$$T_{(2)}\pr T_{(2)}=16T_{(1^2)}+8T_{(2)}+4T_{(3)}+\frac{1}{3}T_{(2^2)}.$$
\end{ex}
\begin{cor}
The set of all finite linear combinations of ($T_\lambda$), denoted by $\widetilde{\mathcal{A}_\infty}$, forms a sub-algebra of $\mathcal{A}_\infty$. The family $(T_\lambda)_{\lambda \text{ partition}}$ is a basis of $\widetilde{\mathcal{A}_\infty}$.
\end{cor}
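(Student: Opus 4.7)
The plan is to deduce both assertions directly from the results already established, so the argument is essentially a bookkeeping corollary of Corollary \ref{corollary 3.2} and Proposition \ref{proposition 3.4}.

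First, I would verify that $\widetilde{\mathcal{A}_\infty}$ is closed under the product $\pr$. By bilinearity it suffices to check closure on pairs of basis elements $T_\lambda,T_\delta$. Corollary \ref{corollary 3.2} gives the expansion
\[
T_\lambda \pr T_\delta = \sum_{\rho \text{ partition} \atop \max(|\lambda|,|\delta|)\leq |\rho|\leq |\lambda|+|\delta|} b_{\lambda\delta}^\rho\, T_\rho,
\]
and the key observation is that the bound $|\rho|\leq |\lambda|+|\delta|$ restricts $\rho$ to a finite set of partitions. Hence $T_\lambda \pr T_\delta \in \widetilde{\mathcal{A}_\infty}$, and extending bilinearly, any product of two finite linear combinations of the $T_\lambda$ remains a finite linear combination.

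Second, I would establish that $(T_\lambda)_{\lambda \text{ partition}}$ is a basis of $\widetilde{\mathcal{A}_\infty}$. The spanning property is immediate from the definition of $\widetilde{\mathcal{A}_\infty}$. Linear independence is obtained by specializing Proposition \ref{proposition 3.4}: every element of $\mathcal{A}_\infty$ has a unique expansion as an (a priori infinite) linear combination of the $T_\lambda$, so in particular no nontrivial finite linear combination of the $T_\lambda$ can vanish. This yields linear independence within $\widetilde{\mathcal{A}_\infty}$.

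There is no serious obstacle: the finiteness bound in Corollary \ref{corollary 3.2} is exactly what prevents the product from escaping the subspace of finite combinations, and the uniqueness part of Proposition \ref{proposition 3.4} supplies linear independence for free. The only point worth emphasizing in the write-up is that the finite support condition is preserved by $\pr$ precisely because the structure coefficients $b_{\lambda\delta}^\rho$ are supported on partitions of size at most $|\lambda|+|\delta|$.
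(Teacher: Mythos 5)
Your proposal is correct and follows essentially the same route as the paper: the closure under $\pr$ rests on the bound $|\rho|\leq|\lambda|+|\delta|$ from Corollary \ref{corollary 3.2}, which is exactly the one-line argument the paper gives, and the basis claim via the uniqueness in Proposition \ref{proposition 3.4} is the intended (if unstated) justification there as well. Nothing further is needed.
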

\begin{proof}
This comes from the fact that the partitions $\rho$ indexing the sum in the product $T_\lambda \pr T_\delta$ verify:
$$|\rho|\leq |\lambda|+|\delta|.$$
\end{proof}
The algebra $\widetilde{\mathcal{A}_\infty}$ will be of interest in Section \ref{section 5}.
\section{Proof of Theorem 2.1} \label{section 4}
In the previous section, we built all algebras and homomorphisms that we need in order to prove Theorem ~\ref{Theorem 2.1}.

Let $\lambda$ and $\delta$ be two proper partitions, by Corollary \ref{corollary 3.2}, we have:
$$T_{\lambda}\pr T_{\delta}=\sum_{\rho \text{ partition}\atop {\max{(|\lambda|,|\delta|)}\leq |\rho|\leq |\lambda|+|\delta|}}b_{\lambda\delta}^{\rho}T_{\rho}.$$
Recall that this is an equality of sequences. Taking the $n$-th term, we have:
\begin{eqnarray*}
\frac{1}{\begin{pmatrix}
n\\
|\lambda|
\end{pmatrix}}{S}_{\lambda,n}\pr \frac{1}{\begin{pmatrix}
n\\
|\delta|
\end{pmatrix}}{S}_{\delta,n}&=&\sum_{\rho~\text{partition}\atop {\max{(|\lambda|,|\delta|)}\leq|\rho|\leq \min{(|\lambda|+|\delta|,n)}}}b_{\lambda\delta}^{\rho}\frac{1}{\begin{pmatrix}
n\\
|\rho|
\end{pmatrix}}{S}_{\rho,n} .
\end{eqnarray*}
By applying $\psi_n$ we obtain (see Lemma \ref{image du base}):
\begin{multline*}
\frac{1}{2^{n-|\lambda|}(n-|\lambda|)!}K_{\lambda}(n)\cdot \frac{1}{2^{n-|\delta|}(n-|\delta|)!}K_{\delta}(n)=\\
\sum_{\rho~\text{partition}\atop {\max{(|\lambda|,|\delta|)}\leq|\rho|\leq \min{(|\lambda|+|\delta|,n)}}}b_{\lambda\delta}^{\rho}\frac{\begin{pmatrix}
n\\
|\lambda|
\end{pmatrix}\begin{pmatrix}
n\\
|\delta|
\end{pmatrix}}{\begin{pmatrix}
n\\
|\rho|
\end{pmatrix}2^{n-|\rho|}(n-|\rho|)!}\begin{pmatrix}
n-|\bar{\rho}|\\
m_1(\rho)
\end{pmatrix}K_{\bar{\rho}}(n).
\end{multline*}
After simplification, we get:
\begin{multline*}
K_{\lambda}(n)\cdot K_{\delta}(n)=
\sum_{\rho\text{ partition} \atop \max{(|\lambda|,|\delta|)}\leq|\rho|\leq \min{(|\lambda|+|\delta|,n)}}b_{\lambda\delta}^{\rho}\frac{(|\rho|)_{|\bar{\rho}|}}{|\lambda|!|\delta|!}2^{n+|\rho|-|\lambda|-|\delta|}n!(n-|\bar{\rho}|)_{m_1(\rho)}K_{\bar{\rho}}(n).
\end{multline*}
\begin{fact} Any partition $\rho$ such that $|\rho|\leq \min{(|\lambda|+|\delta|,n)}$ can be written in a unique way as
$\rho=\tau \cup (1^j),$
where $\tau$ is a proper partition and $j\leq \min{(|\lambda|+|\delta|,n)}-|\tau|$.
\end{fact}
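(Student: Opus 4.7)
The statement is essentially a bookkeeping observation, so the plan is short. Given any partition $\rho$ with $|\rho|\leq \min(|\lambda|+|\delta|,n)$, I would take $\tau:=\bar{\rho}$ (the proper partition associated to $\rho$, as defined in Section 2.1) and $j:=m_1(\rho)$. By definition of $\bar{\rho}$, we have $\rho=\bar{\rho}\cup(1^{m_1(\rho)})=\tau\cup(1^j)$, and $|\rho|=|\tau|+j$.

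For the size condition, the hypothesis $|\rho|\leq \min(|\lambda|+|\delta|,n)$ rewrites directly as $j=|\rho|-|\tau|\leq \min(|\lambda|+|\delta|,n)-|\tau|$, which is what we want.

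For uniqueness, suppose $\rho=\tau'\cup(1^{j'})$ with $\tau'$ proper. Since $\tau'$ has no parts equal to $1$, the multiplicity $m_1(\rho)$ counts exactly the $1$'s coming from $(1^{j'})$, so $j'=m_1(\rho)$; then $\tau'=\rho\setminus(1^{m_1(\rho)})=\bar{\rho}=\tau$. Hence the decomposition is unique.

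There is no real obstacle: the statement just repackages the trivial bijection between partitions of size at most $N$ and pairs $(\tau,j)$ where $\tau$ is proper and $0\leq j\leq N-|\tau|$, instantiated with $N=\min(|\lambda|+|\delta|,n)$. I expect this fact to be used immediately afterward to re-index the sum over $\rho$ (running over all partitions) as a double sum over proper partitions $\tau$ and integers $j$, which will make the polynomial structure in $n$ of the coefficient $\alpha_{\lambda\delta}^{\bar\rho}(n)=\alpha_{\lambda\delta}^{\tau}(n)$ visible after collecting terms.
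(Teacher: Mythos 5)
Your proof is correct, and it is exactly the argument the paper implicitly relies on: the paper states this Fact without proof, treating the decomposition $\rho=\bar{\rho}\cup(1^{m_1(\rho)})$ as immediate from the definition of the proper partition $\bar{\rho}$ in Section 2.1, and then uses it precisely as you anticipate, to re-index the sum over all partitions as a sum over proper partitions $\tau$ and integers $j$. Nothing is missing.
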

Using this fact, the product can be written as follows: 
$$K_{\lambda}(n)\cdot K_{\delta}(n)=\sum_{\tau\text{ proper partition}\atop {|\tau|\leq \min{(|\lambda|+|\delta|,n)}}}\alpha_{\lambda\delta}^{\tau}(n)K_{\tau}(n),$$
where 
\begin{eqnarray}\label{alpha}
\alpha_{\lambda\delta}^{\tau}(n)&=&\frac{1}{|\lambda|!|\delta|!}\sum_{j=0}^{\min{(|\lambda|+|\delta|,n)}-|\tau|}b_{\lambda\delta}^{\tau\cup (1^{j})}n!(n-|\tau|)_{j}(|\tau|+j)_{|\tau|}2^{n+|\tau|+j-|\lambda|-|\delta|}\\
&=&\frac{2^nn!}{|\lambda|!|\delta|!}\sum_{j=0}^{|\lambda|+|\delta|-|\tau|}b_{\lambda\delta}^{\tau\cup (1^{j})}(n-|\tau|)_{j}(|\tau|+j)_{|\tau|}2^{|\tau|+j-|\lambda|-|\delta|}. \nonumber
\end{eqnarray}
The change of sum index in the last equality comes from the fact that if $n< |\lambda|+|\delta|$, we have:
$$(n-|\tau|)_{j}=0 \text{ for any $j$ with } n-|\tau|< j\leq |\lambda|+|\delta|-|\tau|.$$
This ends the proof of Theorem \ref{Theorem 2.1}.

The polynomial of some structure coefficients is constant, especially we have the following corollary.
\begin{cor}
If $\lambda$, $\delta$ and $\rho$ are three proper partitions such that $|\rho|=|\lambda|+|\delta|$, then:
$$\alpha_{\lambda\delta}^{\rho}(n)=b_{\lambda\delta}^{\rho}\frac{|\rho|!}{|\lambda|!|\delta|!}2^nn!=c_{\lambda\delta}^\rho(|\rho|)\frac{|\lambda|!|\delta|!}{(|\lambda|+|\delta|)!}2^nn!.$$
\end{cor}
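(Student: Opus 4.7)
The plan is to read off this corollary directly from equation \eqref{alpha}, which was derived just above in the proof of Theorem \ref{Theorem 2.1}. That formula expresses
$$\alpha_{\lambda\delta}^{\tau}(n)=\frac{2^nn!}{|\lambda|!|\delta|!}\sum_{j=0}^{|\lambda|+|\delta|-|\tau|}b_{\lambda\delta}^{\tau\cup (1^{j})}(n-|\tau|)_{j}(|\tau|+j)_{|\tau|}2^{|\tau|+j-|\lambda|-|\delta|}.$$
Specializing to $\tau=\rho$ with $|\rho|=|\lambda|+|\delta|$ collapses the upper bound of the sum to $0$, so only the $j=0$ term survives. Substituting $(n-|\rho|)_{0}=1$, $(|\rho|)_{|\rho|}=|\rho|!$ and $2^{|\rho|-|\lambda|-|\delta|}=2^{0}=1$ immediately yields the first equality
$$\alpha_{\lambda\delta}^{\rho}(n)=b_{\lambda\delta}^{\rho}\frac{|\rho|!}{|\lambda|!|\delta|!}2^{n}n!.$$

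For the second equality I would invoke the formula $b_{\lambda\delta}^{\rho}=c_{\lambda\delta}^{\rho}(|\rho|)/\bigl(\binom{|\rho|}{|\lambda|}\binom{|\rho|}{|\delta|}\bigr)$ supplied by Corollary \ref{corollary 3.2}. Under the hypothesis $|\rho|=|\lambda|+|\delta|$, both binomial coefficients reduce to $|\rho|!/(|\lambda|!\,|\delta|!)$, and after one cancellation the factor $b_{\lambda\delta}^{\rho}\cdot|\rho|!/(|\lambda|!\,|\delta|!)$ simplifies to $c_{\lambda\delta}^{\rho}(|\rho|)\cdot|\lambda|!\,|\delta|!/(|\lambda|+|\delta|)!$, which is exactly the second expression.

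There is no real obstacle here: the corollary is essentially the statement that the polynomial $f_{\lambda\delta}^{\rho}(n)$ degenerates to a constant at its extreme regime $|\rho|=|\lambda|+|\delta|$, and all combinatorial content has already been packaged into equation \eqref{alpha} and into Corollary \ref{corollary 3.2}. The only care needed is to verify that the exponent $|\rho|+j-|\lambda|-|\delta|$ of $2$ in \eqref{alpha} vanishes at $j=0$ when $|\rho|=|\lambda|+|\delta|$, so that no stray power of $2$ survives in the final expression.
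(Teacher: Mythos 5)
Your proposal is correct and follows exactly the route the paper intends: the corollary is stated without proof immediately after equation \eqref{alpha}, and it is read off by taking $\tau=\rho$ with $|\rho|=|\lambda|+|\delta|$ so that only the $j=0$ term survives, then substituting $b_{\lambda\delta}^{\rho}=c_{\lambda\delta}^{\rho}(|\rho|)/\bigl(\binom{|\rho|}{|\lambda|}\binom{|\rho|}{|\delta|}\bigr)$ from Corollary \ref{corollary 3.2}. Your algebraic simplifications, including the vanishing power of $2$ and the reduction of both binomial coefficients to $|\rho|!/(|\lambda|!\,|\delta|!)$, are all accurate.
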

\begin{ex}
We recall that the product $T_{(2)}\pr T_{(2)}$ has been computed in Example \ref{example T2*T2}. We deduce from it the complete formula for the product $K_{(2)}(n)\cdot K_{(2)}(n)$ for every $n\geq 4$ . Using formula \eqref{alpha}, we have:
$$K_{(2)}(n)\cdot K_{(2)}(n)=2^nn!n(n-1)K_{\emptyset}(n)+2^{n}n!K_{(2)}(n)+2^{n}n!3K_{(3)}(n)+2^{n}n!2K_{(2^2)}(n).$$
\end{ex}
\section{A link with shifted symmetric functions}\label{section 5}

In \cite[Section 9]{IvanovKerov1999}, Ivanov and Kerov have given an isomorphism between the algebra of $1$-shifted symmetric functions and the algebra $\mathcal{\textbf{A}}_\infty$, which is the universal algebra that projects on the center of the symmetric group algebra $Z(\mathbb{C}[\mathcal{S}_n])$, for each $n$. In this section, using the zonal spherical functions of the Gelfand pair $(\mathcal{S}_{2n},\mathcal{B}_n)$, we prove that there is an isomorphism between the algebra of $2$-shifted symmetric functions and the algebra $\widetilde{\mathcal{A}_\infty}$.

We start with the definition of the algebra of shifted symmetric functions with coefficients in $\mathbb{C}(\alpha)$, denoted by $\Lambda^{*}(\alpha)$. An $\alpha$-\textit{shifted symmetric function} $f$ in infinitely many variables $(x_1,x_2,\cdots)$ is a family $(f_i)_{i\geq 1}$ with the two following properties:
\begin{enumerate}
\item $f_i$ is a symmetric polynomial in $(x_1-\frac{1}{\alpha},x_2-\frac{2}{\alpha},\cdots,x_i-\frac{i}{\alpha}).$
\item $f_{i+1}(x_1,x_2,\cdots,x_i,0)=f_i(x_1,x_2,\cdots,x_i).$
\end{enumerate}
The set of all shifted symmetric functions is an algebra denoted $\Lambda^{*}(\alpha)$. In \cite{Lassalle}, Lassalle gives an isomorphism between the algebra of symmetric functions with coefficients in $\mathbb{C}[\alpha]$, denoted by $\Lambda(\alpha)$, and $\Lambda^{*}(\alpha)$. We will denote this isomorphism by $sh_{\alpha}$ instead of $(\#)$, as used by Lassalle. We prefer this notation as it makes the dependence in the parameter $\alpha$ explicit.

Let $f$ be an element of $\Lambda^{*}(\alpha)$. For any partition $\lambda=(\lambda_1,\lambda_2,\cdots,\lambda_l)$, we denote by $f(\lambda)$ the value $f_{l}(\lambda_1,\lambda_2,\cdots,\lambda_{l})$. The shifted symmetric function $f$ is determined by its values on partitions, see \cite[Section 2]{1996q.alg.....8020O}.

\subsection{Gelfand pairs and zonal spherical functions.} Let $G$ be a finite group and $K$ a subgroup of $G$. We denote by $C(G,K)$ the set of all complex-valued functions on $G$ that are constant on each $K$-double coset in $G$. Namely, $$C(G,K)=\lbrace f:G\longrightarrow \mathbb{C} \text{ such that }f(kxk')=f(x) \text{ for all $x$ in $G$ and $k,k'$ in $K$} \rbrace.$$
The set $C(G,K)$ is an algebra with product defined as follows (usually called \textit{convolution product}):
$$(fg)(x)=\sum_{y\in G}f(y)g(y^{-1}x)\text{    for all $f,g$ in $C(G,K)$}.$$
The pair $(G,K)$ is said to be a \textit{Gelfand pair} if the algebra $C(G,K)$ is commutative. More details about Gelfand pairs are given in \cite[Chapter VII, 1]{McDo}. In particular, when $(G,K)$ is a Gelfand pair, the algebra $C(G,K)$ admit a relevant canonical basis $(\omega_i)$. The $\omega_i$ are called \textit{zonal spherical functions}.

\begin{prop}\label{homomorphism}
Every zonal spherical function $\omega$ of a Gelfand pair $(G,K)$ defines a homomorphism of $\mathbb{C}[K\setminus G/ K]$ to $\mathbb{C^*},$ where $\mathbb{C}[K\setminus G/ K]$ is the sub-algebra of $\mathbb{C}[G]$ of elements invariant under the $K$-double action.
\end{prop}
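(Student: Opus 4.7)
The plan is to extend $\omega$ by linearity to a map $\chi_\omega:\mathbb{C}[G]\to\mathbb{C}$ and verify that its restriction to $\mathbb{C}[K\setminus G/K]$ is multiplicative. The key input is the classical functional equation that characterizes zonal spherical functions of a Gelfand pair (see \cite[Chapter VII]{McDo}): for every $x,y\in G$,
$$\omega(x)\,\omega(y)=\frac{1}{|K|}\sum_{k\in K}\omega(xky),$$
together with the fact that $\omega\in C(G,K)$ is bi-$K$-invariant and satisfies $\omega(e)=1$.

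Concretely, I would set $\chi_\omega\bigl(\sum_g a_g g\bigr):=\sum_g a_g\,\omega(g)$ on $\mathbb{C}[G]$. To check multiplicativity on the subalgebra, I would pick two basis elements $K_\lambda=\sum_{g\in KxK}g$ and $K_\mu=\sum_{h\in KyK}h$ of $\mathbb{C}[K\setminus G/K]$. Since $\omega$ is constant on each double coset, $\chi_\omega(K_\lambda)=|KxK|\,\omega(x)$ and similarly for $K_\mu$. For the product, parametrize $g=k_1xk_2$, $h=k_3yk_4$; writing $s_\lambda=|\{(k_1,k_2)\in K\times K:k_1xk_2=x\}|=|K|^2/|KxK|$ and likewise $s_\mu$, and using the bi-$K$-invariance of $\omega$ to collapse the sums over $k_1$ and $k_4$, then substituting $k=k_2k_3$, I obtain
$$\chi_\omega(K_\lambda\cdot K_\mu)=\sum_{g\in KxK,\,h\in KyK}\omega(gh)=\frac{|K|^3}{s_\lambda s_\mu}\sum_{k\in K}\omega(xky).$$
The functional equation then turns the remaining sum into $|K|\,\omega(x)\omega(y)$, and the identity $|KxK|\cdot|KyK|=|K|^4/(s_\lambda s_\mu)$ yields
$$\chi_\omega(K_\lambda\cdot K_\mu)=|KxK|\cdot|KyK|\,\omega(x)\omega(y)=\chi_\omega(K_\lambda)\,\chi_\omega(K_\mu),$$
as wanted.

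To close the argument, I would note that the unit of $\mathbb{C}[K\setminus G/K]$ is $\tfrac{1}{|K|}\sum_{k\in K}k$ (a quick check shows $(\sum_{k\in K}k)\cdot K_\lambda=|K|\,K_\lambda$ for every double coset sum), and $\chi_\omega$ maps it to $\omega(e)=1$. Therefore $\chi_\omega$ is a unital algebra homomorphism, hence in particular never zero on invertible elements---which I take as the meaning of the target $\mathbb{C}^*$ in the statement. The only real obstacle is the bookkeeping of normalizations so that the powers of $|K|$ and the stabilizer sizes $s_\lambda,s_\mu$ cancel correctly; conceptually, the whole computation is driven by the characterizing functional equation for $\omega$, which is precisely what singles out zonal spherical functions among bi-$K$-invariant functions.
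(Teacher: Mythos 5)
Your proof is correct and rests on exactly the same key input as the paper's: the functional equation $\omega(x)\omega(y)=\frac{1}{|K|}\sum_{k\in K}\omega(xky)$ together with bi-$K$-invariance, which makes the average over $K$ collapse. The paper reaches the conclusion in one line by extending this identity bilinearly to $\mathbb{C}[G]$ and observing that $xk=x$ for any $K$-biinvariant element $x$, so your explicit double-coset parametrization and stabilizer bookkeeping, while sound, is just a hands-on version of the same argument.
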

\begin{proof}
A zonal spherical function $\omega$ has the following property given in \cite[page 392]{McDo}:
\begin{equation}\label{zonalfunctionsproperty}
\omega(x)\omega(y)=\frac{1}{|K|}\sum_{k\in K}\omega(xky),
\end{equation}
for all $x,y\in G$. This property can be extended by linearity to the group algebra $\mathbb{C}[G]$. If $x$ and $y$ are two elements of $\mathbb{C}[K\setminus G/ K]$, then we have:
\begin{equation*}
\omega(x)\omega(y)=\frac{1}{|K|}\sum_{k\in K}\omega(xky)=\frac{1}{|K|}\sum_{k\in K}\omega(xy)=\omega(xy),
\end{equation*}
which ends the proof of the proposition.
\end{proof}

The pair $(\mathcal{S}_{2n},\mathcal{B}_n)$ is a Gelfand pair (see \cite[Chapter VII, 2]{McDo}) and its zonal spherical functions are indexed by partitions of $n$. They are denoted by $\omega^{\rho}$ and defined by: $$\omega^{\rho}(x)=\frac{1}{|\mathcal{B}_n|}\sum_{k\in \mathcal{B}_n}\chi^{2\rho}(xk),$$
for $x\in \mathcal{S}_{2n}$, where $\chi^{2\rho}$ is the character of the irreducible $\mathcal{S}_{2n}$-module corresponding to $2\rho:=(2\rho_1,2\rho_2,\cdots)$. Two permutations $x$ and $y$ in the same $\mathcal{B}_n$-double coset $K_\lambda(n)$ have the same image by $\omega^\rho$ denoted by $\omega^\rho_\lambda$.

\subsection{Jack polynomials.} The family of Jack polynomials $J_{\rho}(\alpha)$, indexed by partitions, forms a basis of $\Lambda(\alpha)$. In the basis of power sums $p_\lambda$, $J_{\rho}(\alpha)$ may be developed as follows:
\begin{equation}\label{jackpolynomial1}J_{\rho}(\alpha)=\sum_{|\lambda|=|\rho|}\theta^{\rho}_{\lambda}(\alpha)p_\lambda.
\end{equation}

Let $\lambda$ and $\rho$ be partitions with $|\rho|\geq |\lambda|$. Then the shifted symmetric function $sh_\alpha(p_\lambda)$ is related to $\theta^{\rho}_{\lambda\cup (1^{n-|\lambda|})}(\alpha)$ by the following equation given in \cite[Proposition 2]{Lassalle}:
\begin{equation}\label{lassalleequation}\alpha^{|\lambda|-l(\lambda)}sh_\alpha(p_\lambda)(\rho)=\begin{pmatrix}
|\rho|-|\lambda|+m_1(\lambda)\\
m_1(\lambda)
\end{pmatrix}z_\lambda \theta^{\rho}_{\lambda\cup (1^{n-|\lambda|})}(\alpha),
\end{equation}
where $\begin{displaystyle}z_\lambda=\prod_{i\geq 1}i^{m_i(\lambda)}m_i(\lambda)!\end{displaystyle}$. Directly from the definition of $sh_\alpha(f)$ for any symmetric function $f$, given in \cite[Eq. (3.1)]{Lassalle}, one has $sh_\alpha(p_\lambda)(\rho)=0$ if $|\rho|<|\lambda|$.

\subsection{Isomorphism between $\widetilde{\mathcal{A}_\infty}$ and $\Lambda^*(2)$.} When $\alpha=2$, Jack polynomials are related to zonal spherical functions of $(\mathcal{S}_{2n},\mathcal{B}_n)$ by the following equation (cf. \cite[page 408]{McDo}):
\begin{equation}\label{jackpolynomial2}J_{\rho}(2)=|\mathcal{B}_{n}|\sum_{|\lambda|=n}z_{2\lambda}^{-1}\omega_{\lambda}^{\rho}p_\lambda,
\end{equation}
for every partition $\rho$ of $n$. This formula can be viewed as an analogue for $\alpha=2$ of the following formula known as Frobenius formula, see \cite[page 114]{McDo}:
\begin{equation*}
s_\rho=\sum_{|\lambda|=|\rho|}z_{\lambda}^{-1}\chi_{\lambda}^{\rho}p_\lambda,
\end{equation*}
where $s_\rho$ is the Schur function. Equations (\ref{jackpolynomial1}) and (\ref{jackpolynomial2}) give us the following equality when $\alpha=2$:
\begin{equation*}\label{}
\theta^{\rho}_{\lambda}(2)=|\mathcal{B}_{|\rho|}|z_{2\lambda}^{-1}\omega_{\lambda}^{\rho}.
\end{equation*}

\begin{theorem}\label{F}
The linear mapping $F:\widetilde{\mathcal{A}_{\infty}}\longrightarrow \Lambda^*(2)$ defined by:
\begin{equation}\label{isomorphism}F(T_{\lambda})=2^{2|\lambda|-l(\lambda)}|\lambda|!\frac{sh_2(p_{\lambda})}{z_{\lambda}},
\end{equation}
is an isomorphism of algebras.
\end{theorem}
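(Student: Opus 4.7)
The plan is to prove $F$ is a linear bijection by basis comparison and then deduce multiplicativity indirectly through a separating family of one-dimensional characters of both algebras.

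\emph{Linear isomorphism.} Since Lassalle's map $sh_2:\Lambda(2)\to\Lambda^*(2)$ is an algebra isomorphism and $(p_\lambda)_{\lambda}$ is a $\mathbb{C}$-basis of $\Lambda(2)$, the family $(sh_2(p_\lambda))_{\lambda}$ is a basis of $\Lambda^*(2)$. The scalars $2^{2|\lambda|-l(\lambda)}|\lambda|!/z_\lambda$ are nonzero, so $F$ sends the basis $(T_\lambda)$ of $\widetilde{\mathcal{A}_\infty}$ bijectively onto a rescaled basis of $\Lambda^*(2)$; hence $F$ is a linear isomorphism.

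\emph{Strategy for multiplicativity.} For every partition $\rho$, introduce two linear functionals: $ev_\rho:\Lambda^*(2)\to\mathbb{C}$ defined by $ev_\rho(f):=f(\rho)$, and $\chi_\rho:=\omega^\rho\circ\theta_{|\rho|}:\widetilde{\mathcal{A}_\infty}\to\mathbb{C}$, where $\theta_n:=\psi_n\circ p_n$ is the composition of the projective-limit projection with the morphism of Proposition~\ref{proposition 3.2}. Evaluation $ev_\rho$ is an algebra morphism (substitution into commutative polynomials), and the family $(ev_\rho)_{\rho}$ separates $\Lambda^*(2)$ because every shifted symmetric function is determined by its values on partitions. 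Proposition~\ref{homomorphism} combined with the fact that $\psi_n$ and $p_n$ are algebra morphisms makes $\chi_\rho$ an algebra morphism too. The crux is the identity $ev_\rho\circ F=\chi_\rho$ on $\widetilde{\mathcal{A}_\infty}$ for every partition $\rho$. Once this is granted, for any $x,y\in\widetilde{\mathcal{A}_\infty}$ one has $ev_\rho(F(xy))=\chi_\rho(xy)=\chi_\rho(x)\chi_\rho(y)=ev_\rho(F(x)F(y))$ for every $\rho$, and separation forces $F(xy)=F(x)F(y)$.

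\emph{Core computation.} It suffices to verify $ev_\rho(F(T_\lambda))=\chi_\rho(T_\lambda)$ for every pair of partitions $(\lambda,\rho)$. Write $n=|\rho|$. When $n<|\lambda|$, both sides vanish: the right by definition of $T_\lambda$, the left by the vanishing $sh_2(p_\lambda)(\rho)=0$ noted in the paper. For $n\geq|\lambda|$, set $\mu:=\bar\lambda\cup 1^{n-|\bar\lambda|}=\lambda\cup 1^{n-|\lambda|}$. Combining the definition of $T_\lambda$ with Lemma~\ref{image du base} gives
\[
\theta_n(T_\lambda)=\frac{\binom{n-|\bar\lambda|}{m_1(\lambda)}}{\binom{n}{|\lambda|}\,2^{n-|\lambda|}(n-|\lambda|)!}\,K_{\bar\lambda}(n).
\]
Since $\omega^\rho$ is constant on double cosets and $|K_\mu(n)|=|\mathcal{B}_n|^2/z_{2\mu}$ by a classical formula of Macdonald, applying $\omega^\rho$ and simplifying with $|\lambda|=|\bar\lambda|+m_1(\lambda)$ and $|\mathcal{B}_n|=2^nn!$ yields
\[
\chi_\rho(T_\lambda)=2^{|\lambda|}|\lambda|!\binom{n-|\lambda|+m_1(\lambda)}{m_1(\lambda)}|\mathcal{B}_n|\,z_{2\mu}^{-1}\,\omega^\rho_\mu.
\]
On the other side, equations~\eqref{lassalleequation} (at $\alpha=2$) and~\eqref{jackpolynomial2} combine to give $\theta^\rho_\mu(2)=|\mathcal{B}_n|z_{2\mu}^{-1}\omega^\rho_\mu$, and substituting the expression for $sh_2(p_\lambda)(\rho)$ drawn from~\eqref{lassalleequation} into $ev_\rho(F(T_\lambda))=2^{2|\lambda|-l(\lambda)}|\lambda|!\,sh_2(p_\lambda)(\rho)/z_\lambda$ produces exactly the same right-hand side. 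The identity follows, and with it the theorem.

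\emph{Expected obstacle.} All conceptual ingredients are already in place: $\omega^\rho$ is a character of the Hecke algebra by Proposition~\ref{homomorphism}, $|K_\mu(n)|=|\mathcal{B}_n|^2/z_{2\mu}$ is classical, Lassalle's identity~\eqref{lassalleequation} bridges power sums and Jack characters, and shifted symmetric functions are determined by their values on partitions. The main work is the prefactor matching in the core computation, which requires careful bookkeeping of the factorials, powers of $2$ and binomials produced by the passage $\lambda\mapsto\bar\lambda$ on the Hecke side and the normalization of Jack polynomials on the shifted side.
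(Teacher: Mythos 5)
Your proposal is correct and follows essentially the same route as the paper: both establish the key identity $F(T_\lambda)(\rho)=\omega^\rho(\psi_{|\rho|}((T_\lambda)_{|\rho|}))$ via Lemma \ref{image du base}, the count $|K_{\bar\lambda}(n)|=|\mathcal{B}_n|^2/z_{2\mu}$, the relation $\theta^\rho_\mu(2)=|\mathcal{B}_{|\rho|}|z_{2\mu}^{-1}\omega^\rho_\mu$ and Lassalle's formula, and then deduce multiplicativity from the fact that $\omega^\rho\circ\psi_{|\rho|}\circ p_{|\rho|}$ is a composition of algebra morphisms while evaluation at partitions separates $\Lambda^*(2)$. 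Your explicit "separating characters" phrasing is just a tidier packaging of the argument the paper itself points out in the remark following its proof.
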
 
\begin{proof}
Let $\lambda$ be a partition and $T_\lambda$ the corresponding element in $\widetilde{\mathcal{A}_\infty}$. Let $n$ be an integer, $n\geq |\lambda|$. By definition, $T_\lambda$ is a sequence and its $n$-th term $(T_\lambda)_n=\frac{1}{\begin{pmatrix}
n\\
|\lambda|
\end{pmatrix}}S_{\lambda,n}$ lies in $\mathcal{A}_n$. We project onto $\mathbb{C}[\mathcal{B}_n\setminus\mathcal{S}_{2n}/ \mathcal{B}_n]$ by appying $\psi_n$. By Lemma \ref{image du base}, we get:
\begin{eqnarray*}
\psi_n((T_\lambda)_n)&=&\frac{1}{\begin{pmatrix}
n\\
|\lambda|
\end{pmatrix}}\frac{1}{2^{n-|\lambda|}(n-|\lambda|)!}\begin{pmatrix}
n-|\bar{\lambda}|\\
m_1(\lambda)
\end{pmatrix}K_{\bar{\lambda}}(n)\\
&=&\frac{2^{|\lambda|}|\lambda|!}{2^nn!}\begin{pmatrix}
n-|\bar{\lambda}|\\
m_1(\lambda)
\end{pmatrix}K_{\bar{\lambda}}(n).
\end{eqnarray*}
For any partition $\rho$ with size equal to $n$, by applying $\omega^\rho$ to $\psi_n((T_\lambda)_n)$, we obtain:
\begin{eqnarray*}
\omega^\rho(\psi_n((T_\lambda)_n))&=&\frac{2^{|\lambda|}|\lambda|!}{2^nn!}\begin{pmatrix}
n-|\bar{\lambda}|\\
m_1(\lambda)
\end{pmatrix}|K_{\bar{\lambda}}(n)|\omega^\rho_{\bar{\lambda}\cup (1^{n-|\bar{\lambda}|})}\\
&=&\frac{2^{|\lambda|}|\lambda|!}{2^nn!}\begin{pmatrix}
n-|\bar{\lambda}|\\
m_1(\lambda)
\end{pmatrix}|K_{\bar{\lambda}}(n)|\theta^\rho_{\bar{\lambda}\cup (1^{n-|\bar{\lambda}|})}(2)\frac{1}{|\mathcal{B}_{n}|z_{2(\bar{\lambda}\cup (1^{n-|\bar{\lambda}|})}^{-1}}.
\end{eqnarray*}
We denote by $|K_{\bar{\lambda}}(n)|$ the number of permutations of $2n$ with coset-type $\bar{\lambda}\cup (1^{n-|\bar{\lambda}|})$. This number is equal to $\frac{|\mathcal{B}_n|^2}{z_{2(\bar{\lambda}\cup (1^{n-|\bar{\lambda}|})}}$ (see \cite[page 402]{McDo}). Thus, after simplification, we get:
\begin{equation*}
\omega^\rho(\psi_n((T_\lambda)_n))=2^{|\lambda|}|\lambda|!\begin{pmatrix}
n-|\bar{\lambda}|\\
m_1(\lambda)
\end{pmatrix}\theta^\rho_{\bar{\lambda}\cup (1^{n-|\bar{\lambda}|})}(2),
\end{equation*}
which together with (\ref{lassalleequation}) gives us the following equation: 
\begin{equation*}
\omega^\rho(\psi_n((T_\lambda)_n))=2^{2|\lambda|-l(\lambda)}|\lambda|!\frac{sh_2(p_\lambda)(\rho)}{z_{\lambda}}.
\end{equation*}
This formula is valid for any partition $\rho$ such that $|\rho|\geq |\lambda|$. We can check that it is also valid for any partition $\rho$ with size less than $|\lambda|$, since in this case $(T_\lambda)_{|\rho|}$ and $sh_2(p_\lambda)(\rho)$ are both equal to zero.\\
Finally, for any partition $\rho$, its image by $F(T_\lambda)$ as defined in the statement of Theorem \ref{F}, can also be written as follows:
\begin{equation*}
F(T_\lambda)(\rho)=\omega^\rho(\psi_{|\rho|}((T_\lambda)_{|\rho|})).
\end{equation*}
Let $\delta$ be a partition, for any partition $\rho$, we have:
\begin{align}
F(T_\lambda\pr T_\delta)(\rho)&=\omega^\rho(\psi_{|\rho|}((T_\lambda\pr T_\delta)_{|\rho|}))\\
&=\omega^\rho(\psi_{|\rho|}((T_\lambda)_{|\rho|}\pr (T_\delta)_{|\rho|}))&\\
&=\omega^\rho(\psi_{|\rho|}(T_\lambda)_{|\rho|}\pr \psi_{|\rho|}(T_\delta)_{|\rho|})&\text{(Proposition \ref{proposition 3.2})}\\
&=\omega^\rho(\psi_{|\rho|}(T_\lambda)_{|\rho|})\cdot \omega^\rho(\psi_{|\rho|}(T_\delta)_{|\rho|}).&
\end{align}
The last equality comes from the fact that $\omega^\rho$ defines a homomorphism of $\mathbb{C}[\mathcal{B}_n\setminus\mathcal{S}_{2n}/ \mathcal{B}_n]$ to $\mathbb{C^*}$ (Proposition \ref{homomorphism}). Hence,
$$F(T_\lambda\pr T_\delta)(\rho)=(F(T_\lambda)\cdot F(T_\delta))(\rho),$$
for any two partitions $\lambda$ and $\delta$. That means that $F$ is a homomorphism of algebras from $\widetilde{\mathcal{A}_\infty}$ to $\Lambda^*(2)$. Since $(T_\lambda)_{\lambda\text{ partition}}$ and $(sh_2(p_\lambda))_{\lambda\text{ partition}}$ are respectively bases of $\widetilde{\mathcal{A}_\infty}$ and $\Lambda^*(2)$, $F$ is actually an isomorphism of algebras. 
\end{proof}
\begin{remark}
The reader should remark while reading the proof that $F(T_{\lambda})$ could be defined by $F(T_\lambda)(\rho)=\omega^\rho(\psi_{|\rho|}((T_\lambda)_{|\rho|}))$, which would show directly that $F(T_{\lambda})$ is a homomorphism since it is the composition of homomorphisms. However, we prefer to use the definition given by the equation (\ref{isomorphism}) because it gives us explicitly the action of $F$ on the elements of basis of $\widetilde{\mathcal{A}_\infty}$.
\end{remark}

\subsection{Structure constants.} As said in the beginning of this paragraph, $sh_\alpha$ is an isomorphism between $\Lambda(\alpha)$ and $\Lambda^{*}(\alpha)$. Thus, the family $(sh_\alpha(p_\lambda))_{\lambda\text{ partition}}$ forms a linear basis of $\Lambda^{*}(\alpha)$. This allows us to write the following equation:
\begin{equation*}
sh_\alpha(p_\lambda)sh_\alpha(p_\delta)=\sum_\rho g_{\lambda,\delta}^{\rho}(\alpha)sh_\alpha(p_\rho).
\end{equation*}
It is proven in \cite{dolega2012kerov} that the coefficients $g_{\lambda,\delta}^{\rho}(\alpha)$ are polynomial in $\frac{\alpha-1}{\sqrt{\alpha}}$. This structure constants are also related to the Matching-Jack conjecture of Goulden and ackson, see\cite[Section 4.5]{dolega2012kerov}.

We are interested in the case $\alpha=2$. We proved in \ref{corollary 3.2} that the coefficients $b_{\lambda\delta}^{\rho}$ that appear in the product $T_{\lambda}\pr T_{\delta}$ are non-negative rational numbers. By applying the isomorphism $F$ given in \ref{F} to the product $T_{\lambda}\pr T_{\delta}$, we get directly the following proposition.
\begin{prop}
The coefficients $g_{\lambda,\delta}^{\rho}(2)$ are non-negative rational numbers.
\end{prop}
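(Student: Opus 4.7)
The plan is to transport the non-negativity statement of Corollary~\ref{corollary 3.2} for the coefficients $b_{\lambda\delta}^{\rho}$ through the isomorphism $F$ of Theorem~\ref{F}, and observe that $F$ rescales each basis element $T_\lambda$ to $sh_2(p_\lambda)$ by a \emph{strictly positive} rational factor. Since $F$ is an algebra isomorphism, the structure coefficients in the two bases differ only by the ratio of these positive factors, which preserves non-negativity.

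More precisely, write $F(T_\lambda)=c_\lambda\, sh_2(p_\lambda)$ with
$$c_\lambda=\frac{2^{2|\lambda|-l(\lambda)}\,|\lambda|!}{z_\lambda}>0.$$
Applying $F$ to the product in $\widetilde{\mathcal{A}_\infty}$ given by Corollary~\ref{corollary 3.2},
$$T_\lambda\pr T_\delta=\sum_{\rho}b_{\lambda\delta}^{\rho}T_{\rho},$$
and using that $F$ is a homomorphism, I would obtain
$$c_\lambda c_\delta\, sh_2(p_\lambda)\,sh_2(p_\delta)=\sum_{\rho}b_{\lambda\delta}^{\rho}\,c_\rho\,sh_2(p_\rho).$$
Since $(sh_2(p_\rho))_{\rho}$ is a basis of $\Lambda^*(2)$, comparing with the definition of $g_{\lambda,\delta}^{\rho}(2)$ yields the explicit formula
$$g_{\lambda,\delta}^{\rho}(2)=\frac{c_\rho}{c_\lambda c_\delta}\,b_{\lambda\delta}^{\rho}.$$
The factor $c_\rho/(c_\lambda c_\delta)$ is a positive rational number, and $b_{\lambda\delta}^{\rho}$ is a non-negative rational number by Corollary~\ref{corollary 3.2}, so $g_{\lambda,\delta}^{\rho}(2)\ge 0$ and lies in $\mathbb{Q}$.

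There is essentially no obstacle here: all the real work was done when establishing that the $b_{\lambda\delta}^{\rho}$ are non-negative rationals (via the combinatorial product built from partial bijections) and when identifying $\widetilde{\mathcal{A}_\infty}$ with $\Lambda^*(2)$ in Theorem~\ref{F}. The only care needed in writing this up is to note that the constants $c_\lambda$ are manifestly positive rational numbers, so multiplication and division by them preserves both rationality and non-negativity of the coefficients.
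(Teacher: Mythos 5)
Your proposal is correct and follows exactly the paper's argument: the paper also deduces the non-negativity of $g_{\lambda,\delta}^{\rho}(2)$ by applying the isomorphism $F$ of Theorem \ref{F} to the product $T_\lambda\pr T_\delta$ and invoking the non-negativity of the $b_{\lambda\delta}^{\rho}$ from Corollary \ref{corollary 3.2}. You merely make explicit the positive rational rescaling factors $c_\lambda=2^{2|\lambda|-l(\lambda)}|\lambda|!/z_\lambda$ and the resulting formula $g_{\lambda,\delta}^{\rho}(2)=\frac{c_\rho}{c_\lambda c_\delta}b_{\lambda\delta}^{\rho}$, which the paper leaves implicit.
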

\section{Filtrations of the algebra $\mathcal{A}_\infty$}\label{section6}

We gave in Theorem \ref{Theorem 2.1} a polynomiality property of the structure coefficients of the Hecke algebra of the pair $(\mathcal{S}_{2n},\mathcal{B}_n)$. In order to bound the degree of these polynomials, we study in this section some filtrations of the algebra $\mathcal{A}_\infty$.

From the formula of the product of basis elements in $\mathcal{A}_\infty$, given in Corollary \ref{corollary 3.2}, we can see that the function
\begin{equation*}
\deg_1(T_\lambda)=|\lambda|
\end{equation*} 
defines a filtration on $\mathcal{A}_\infty$.

In order to obtain other filtrations on $\mathcal{A}_\infty$, we give a decomposition of any partial bijection into partial bijections with coset-type equal to $(1)$ or $(2)$. We call \textit{cycle} of length $r+1$ a partial bijection with coset-type equal to $(r+1)$, where $r$ is a positive integer. We write a cycle $\mathcal{C}$ of length $r+1$ as follows (see \cite[page 2480]{Aker20122465}):
\begin{equation}\label{genericform}
\mathcal{C}=(c_1,c_2:c_3,c_4:\cdots:c_{4r+1},c_{4r+2}:c_{4r+3},c_{4r+4}).
\end{equation}
This means that:
\begin{align*}
&\lbrace c_{2i+1},c_{2i+2}\rbrace=\rho(k_i)\text{ for some $k_i$} &\text{$i=0,\cdots,2r+1$}\\
&\mathcal{C}(c_1)=c_{4r+4}&\\
&\mathcal{C}(c_{4l+1})=c_{4l}&\text{$l=1,\cdots,r$}\\
&\mathcal{C}(c_{4l+2})=c_{4l+3}&\text{$l=0,\cdots,r$}
\end{align*}
\begin{ex}
With this notation, the longest cycle in Figure \ref{fig:coset-type}, which we draw here again for convenience,
\begin{figure}[htbp]
\begin{center}
\begin{tikzpicture}
\foreach \angle / \label in
{ 0/2, 36/4, 72/9, 108/3, 144/1, 180/10}
{
\node at (\angle:2cm) {\footnotesize $\bullet$};
\draw[red] (\angle:1.7cm) node{\textsf{\label}};
}
\foreach \angle / \label in
{ 0/1, 36/2, 72/3, 108/4, 144/5, 180/6}
{\draw (\angle:2.3cm) node{\textsf{\label}};}
\draw (0:2cm) .. controls + (5mm,0) and +(.5,-.1) .. (36:2cm);
\draw[red] (36:2cm) .. controls + (-.5,.1) and + (0,-.2) .. (108:2);
\draw (108:2cm) .. controls + (0,.2) and + (0,.2) .. (72:2);
\draw[red]	(72:2)		  .. controls + (0,-.2) and + (+.5,.1) .. (180:2);
\draw	(180:2)		  .. controls + (-5mm,0) and + (-0.5,0) .. (144:2);
\draw[red]	(144:2)		  .. controls + (1.2,0) and + (0,.2) .. (0:2);
\end{tikzpicture}
\end{center}
\end{figure}\\
may be written as:
\begin{equation*}
(1,2:4,3:4,3:9,10:6,5:1,2)
\end{equation*}
\end{ex}
\begin{notation}For a partial bijection $\alpha$ and $x\in \mathcal{D}_n$, we write $\alpha \in x$ to say that the coefficient of $\alpha$ in $x$ is non-zero.
\end{notation}
\begin{obs}\label{observation}
Let $\alpha$, $\beta$ and $\gamma$ be three partial bijections such that $\gamma\in \alpha \pr \beta$. If $\alpha\in x$ and $\beta\in y$ where $x$ and $y$ are two elements of $\mathcal{D}_n$ with non-negative coefficients, then $\gamma\in x\pr y$.
\end{obs}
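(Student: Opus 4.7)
The plan is to unfold the definitions and exploit the fact that the product $\pr$ on $Q_n$ is defined as an \emph{average} (hence with non-negative coefficients) of partial bijections. Write $\displaystyle x=\sum_{\alpha'\in Q_n} x_{\alpha'}\,\alpha'$ and $\displaystyle y=\sum_{\beta'\in Q_n} y_{\beta'}\,\beta'$, where by hypothesis $x_{\alpha'}\geq 0$ and $y_{\beta'}\geq 0$ for every $\alpha',\beta'\in Q_n$, and moreover $x_\alpha>0$ and $y_\beta>0$ (this is what $\alpha\in x$ and $\beta\in y$ mean).

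By bilinearity of $\pr$, we have
\begin{equation*}
x\pr y=\sum_{\alpha',\beta'\in Q_n} x_{\alpha'}y_{\beta'}\,(\alpha'\pr \beta').
\end{equation*}
The first key point is that for any two partial bijections $\alpha',\beta'\in Q_n$, the product $\alpha'\pr\beta'$ defined in equation~\eqref{equation 3} is, by construction, a non-negative linear combination of elements of $Q_n$ (it is $\frac{1}{|E_{\alpha'}^{\beta'}(n)|}$ times a sum of genuine partial bijections). Denote by $[\gamma](z)$ the coefficient of $\gamma$ in $z\in \mathcal{D}_n$; then $[\gamma](\alpha'\pr\beta')\geq 0$ for every $\alpha',\beta'$.

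The second key point is the $(\alpha,\beta)$ term itself: since $\gamma\in\alpha\pr\beta$ by assumption, we have $[\gamma](\alpha\pr\beta)>0$, and therefore
\begin{equation*}
x_\alpha\, y_\beta\,[\gamma](\alpha\pr\beta)>0.
\end{equation*}
Extracting the coefficient of $\gamma$ on both sides of the above display gives
\begin{equation*}
[\gamma](x\pr y)=\sum_{\alpha',\beta'\in Q_n} x_{\alpha'}y_{\beta'}\,[\gamma](\alpha'\pr \beta'),
\end{equation*}
a sum of non-negative terms containing the strictly positive term coming from $(\alpha',\beta')=(\alpha,\beta)$. Hence $[\gamma](x\pr y)>0$, i.e.\ $\gamma\in x\pr y$, as claimed.

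There is no real obstacle here: the statement is a direct consequence of the non-negativity built into the definition of $\pr$ and the fact that a sum of non-negative reals with one strictly positive summand is strictly positive. The only thing to be a little careful about is to expand $x\pr y$ by bilinearity before looking at the $\gamma$-coefficient, so that the contributions from different pairs $(\alpha',\beta')$ cannot cancel each other.
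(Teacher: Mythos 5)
Your proof is correct and is exactly the justification the paper leaves implicit: the Observation is stated without proof, being an immediate consequence of the fact that $\pr$ is defined on basis elements as an average (hence with non-negative coefficients) together with bilinearity, so the $\gamma$-coefficient of $x\pr y$ is a sum of non-negative terms containing the strictly positive contribution of the pair $(\alpha,\beta)$. Nothing is missing.
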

\begin{lem}\label{decompb}
For any cycle $\mathcal{C}$ of length $r+1$, there exist $r$ partial bijections $\tau_1,\cdots,\tau_r$ with coset-type $(2)$, such that $\mathcal{C}\in \tau_1\pr \cdots\pr \tau_r$.
\end{lem}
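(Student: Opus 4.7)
The plan is to proceed by induction on $r$. The base case $r = 1$ is immediate: $\mathcal{C}$ itself has coset-type $(2)$, so we take $\tau_1 = \mathcal{C}$.

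For the inductive step, given $\mathcal{C}$ of length $r+1$ written in the generic form \eqref{genericform}, I would peel off a single coset-type $(2)$ partial bijection from the tail of $\mathcal{C}$. Specifically, let
$$\mathcal{C}' := (c_1, c_2 : c_3, c_4 : \cdots : c_{4r-3}, c_{4r-2} : c_{4r-1}, c_{4r})$$
be the cycle of length $r$ obtained by truncating $\mathcal{C}$ to its first $4r$ entries (so that $\mathcal{C}'(c_1) = c_{4r}$), and define
$$\tau := (c_{4r+1}, c_{4r+2} : c_{4r+3}, c_{4r+4} : c_{4r}, c_{4r-1} : c_{4r-1}, c_{4r}).$$
The third and fourth pairs of the generic form of $\tau$ coincide (both equal $\{c_{4r-1}, c_{4r}\}$), which makes $c_{4r-1}$ a fixed point of $\tau$. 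A direct inspection of $\Gamma(\tau)$ shows it is a single $4$-cycle, hence $ct(\tau) = (2)$.

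The core of the argument is to verify that $\mathcal{C} \in \tau \pr \mathcal{C}'$. I would do this by exhibiting one particular element $(\tilde\tau, \tilde{\mathcal{C}'}) \in E^{\mathcal{C}'}_{\tau}(n)$ whose exact composition recovers $\mathcal{C}$: take $\tilde{\mathcal{C}'}$ to be the trivial extension of $\mathcal{C}'$ by the identity on the pair $\{c_{4r+1}, c_{4r+2}\}$, and take $\tilde\tau$ to be the trivial extension of $\tau$ by the identity on the $r-1$ pairs $\{c_{4l-1}, c_{4l}\}$, $l = 1, \ldots, r-1$. Both extensions share the intermediate set $d_{\tau} \cup d'_{\mathcal{C}'} = \{c_3, c_4, c_7, c_8, \ldots, c_{4r-1}, c_{4r}, c_{4r+1}, c_{4r+2}\}$, and a termwise check confirms $\tilde\tau \circ \tilde{\mathcal{C}'} = \mathcal{C}$. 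Since $\tau \pr \mathcal{C}'$ is a positive average over $E^{\mathcal{C}'}_{\tau}(n)$, this guarantees that $\mathcal{C}$ lies in its support.

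Applying the induction hypothesis to $\mathcal{C}'$ yields partial bijections $\tau_2, \ldots, \tau_r$ of coset-type $(2)$ with $\mathcal{C}' \in \tau_2 \pr \cdots \pr \tau_r$. Combining this with $\mathcal{C} \in \tau \pr \mathcal{C}'$ via Observation~\ref{observation} and the associativity of $\pr$ established in Section~\ref{section 3} gives $\mathcal{C} \in \tau \pr \tau_2 \pr \cdots \pr \tau_r$, which proves the lemma with $\tau_1 = \tau$. The main obstacle is identifying the right $\tau$: one has to accept the small degeneracy that $\{c_{4r-1}, c_{4r}\}$ appears simultaneously as a domain pair and a codomain pair of $\tau$, but this is precisely what allows the composition to glue $\mathcal{C}$ back together without disturbing the portion already produced by $\mathcal{C}'$.
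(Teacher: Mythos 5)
Your overall strategy --- peeling off one coset-type-$(2)$ factor at a time, exhibiting a single explicit pair of trivial extensions whose composition is $\mathcal{C}$, and then chaining via Observation~\ref{observation} and associativity --- is exactly the paper's. The gap is in your specific choice of the factor
$$\tau = (c_{4r+1}, c_{4r+2} : c_{4r+3}, c_{4r+4} : c_{4r}, c_{4r-1} : c_{4r-1}, c_{4r}).$$
Its domain is $\lbrace c_{4r+1},c_{4r+2}\rbrace\cup\lbrace c_{4r-1},c_{4r}\rbrace$, i.e.\ the union of a \emph{domain} pair of $\mathcal{C}$ with a \emph{codomain} pair of $\mathcal{C}$, and nothing prevents these from being the same pair $\rho(k)$, since the sets $d$ and $d'$ of a cycle may overlap. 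When they coincide, $\tau$ is not a well-defined partial bijection: the prescriptions $\tau(c_{4r+1})=c_{4r}$, $\tau(c_{4r+2})=c_{4r+3}$, $\tau(c_{4r})=c_{4r+4}$, $\tau(c_{4r-1})=c_{4r-1}$ assign two different images to the same element. A concrete instance is $\omega=2\,3\,4\,5\,6\,1\in\mathcal{S}_6$, a cycle of coset-type $(3)$ (so $r=2$), whose generic form is $(1,2:3,4:3,4:5,6:5,6:1,2)$; here $\lbrace c_9,c_{10}\rbrace=\lbrace c_7,c_8\rbrace=\lbrace 5,6\rbrace$, and your $\tau=(5,6:1,2:6,5:5,6)$ would require both $\tau(6)=1$ and $\tau(6)=2$. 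Even when the two pairs are distinct, your argument needs $\lbrace c_{4r+1},c_{4r+2}\rbrace$ to lie outside $d'_{\mathcal{C}'}$ in order to extend $\mathcal{C}'$ ``by the identity'' on it; if that domain pair already occurs among the codomain pairs $\lbrace c_{4l+3},c_{4l+4}\rbrace$, $l\le r-1$, then $d_\tau\cup d'_{\mathcal{C}'}$ has only $r$ pairs and the composite can never have the $r+1$ domain pairs of $\mathcal{C}$.

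The paper sidesteps all of this by choosing the length-$2$ factor supported entirely on codomain pairs of $\mathcal{C}$, namely $\tau_1=(c_{4r-1},c_{4r}:c_{4r+4},c_{4r+3}:c_{4r+3},c_{4r+4}:c_{4r},c_{4r-1})$, whose domain $\lbrace c_{4r-1},c_{4r}\rbrace\cup\lbrace c_{4r+3},c_{4r+4}\rbrace$ is a union of two of the $r+1$ necessarily distinct pairs constituting $d'_{\mathcal{C}}\in\mathbf{P}_n$. The truncated cycle is the same as your $\mathcal{C}'$, but its trivial extension sends the pair $\lbrace c_{4r+1},c_{4r+2}\rbrace$ to $\lbrace c_{4r+3},c_{4r+4}\rbrace$ (a trivial extension need not act as the identity, only send a pair to a pair), which is always legitimate. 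Your induction goes through once you replace $\tau$ by $\tau_1$ and adjust the two extensions accordingly.
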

\begin{proof}
Let $\mathcal{C}$ be a cycle of length $r+1$ written in generic form as in (\ref{genericform}). We can check that $\mathcal{C}$ can be written as follows:
\begin{equation*}
\mathcal{C}=\mathcal{K}\circ \mathcal{J},
\end{equation*} 
where
\begin{equation*}
\mathcal{K}=(c_3,c_4:c_4,c_3)\cdots(c_{4r-5},c_{4r-4}:c_{4r-4},c_{4r-5})(c_{4r-1},c_{4r}:c_{4r+4},c_{4r+3}:c_{4r+3},c_{4r+4}:c_{4r},c_{4r-1})
\end{equation*}
and 
\begin{equation*}
\mathcal{J}=(c_1,c_2:c_3,c_4:\cdots:c_{4r-3},c_{4r-2}:c_{4r-1},c_{4r})(c_{4r+1},c_{4r+2}:c_{4r+3},c_{4r+4}).
\end{equation*}
Then, if we denote by $\tau_1$ the cycle of length $2$ of $\mathcal{K}$ and by $\mathcal{C}_1$ the cycle of length $r$ of $\mathcal{J}$, we have:
\begin{align*}
\mathcal{C}\in \tau_1\pr \mathcal{C}_1&\text{, $ct(\tau_1)=(2)$ and $ct(\mathcal{C}_1)=(r)$}.
\end{align*}
In the same way we can write:
\begin{align*}
\mathcal{C}_1\in \tau_2\pr \mathcal{C}_2&\text{ with $ct(\tau_2)=(2)$ and $ct(\mathcal{C}_2)=(r-1)$}.
\end{align*} 
Using the observation above, we get:
\begin{align*}
\mathcal{C}\in \tau_1\pr\tau_2\pr \mathcal{C}_2.
\end{align*} 
Thus, by iteration we obtain:
\begin{align*}
\mathcal{C}\in \tau_1\pr\tau_2\pr \cdots\pr\tau_r&\text{ with $ct(\tau_i)=(2)$ for all $i=1,\cdots, r$}.
\end{align*} 
This proves the lemma.
\end{proof}
\begin{lem}\label{decompb2}
For any partial bijection $\tau$ with coset-type $\rho=(\rho_1,\cdots,\rho_l)$, there exist $r$ partial bijections $\tau_1,\cdots,\tau_r$ with coset-type $(2)$ and $m_1(\rho)$ partial bijections $\beta_1,\cdots,\beta_{m_1(\rho)}$ with coset-type $(1)$ such that:
\begin{equation*}
\tau\in \tau_1\pr \cdots \pr \tau_r \pr \beta_1\pr \cdots\pr \beta_{m_1(\rho)},
\end{equation*}
where $r=|\rho|-m_1(\rho)$.
\end{lem}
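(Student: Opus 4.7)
The plan is to exploit the cycle decomposition of the graph $\Gamma(\tau)$. Since $\Gamma(\tau)$ is a disjoint union of $l$ cycles of lengths $2\rho_1,\ldots,2\rho_l$, one obtains a canonical splitting $\tau=\mathcal{C}_1\sqcup\cdots\sqcup\mathcal{C}_l$ into partial bijections $\mathcal{C}_i$ of coset-type $(\rho_i)$, whose domains are pairwise disjoint subsets of $d$ and whose codomains are pairwise disjoint subsets of $d'$, with the two unions recovering $d$ and $d'$ respectively. For every $i$ with $\rho_i=1$ I would simply set $\beta_k:=\mathcal{C}_i$, obtaining the $m_1(\rho)$ coset-type-$(1)$ pieces required by the statement. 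For every $i$ with $\rho_i\geq 2$ I would invoke Lemma~\ref{decompb} (with parameter $\rho_i-1$) to produce coset-type-$(2)$ partial bijections $\tau_{i,1},\ldots,\tau_{i,\rho_i-1}$ such that $\mathcal{C}_i\in\tau_{i,1}\pr\cdots\pr\tau_{i,\rho_i-1}$.

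The crucial intermediate step is to establish the ``disjoint-union membership'' $\tau\in\mathcal{C}_1\pr\cdots\pr\mathcal{C}_l\pr\beta_1\pr\cdots\pr\beta_{m_1(\rho)}$. Because these factors have pairwise disjoint domains and codomains, the set $E$ of trivial extensions indexing each averaged $\pr$-product can be listed explicitly, and one specific element of it---in which every factor is trivially extended by the identity on the pairs $\rho(k)$ contributed by all of the other factors---composes to exactly $\tau$; so $\tau$ appears as a term of the expansion. I would formalise this by induction on $l$, using the associativity of $\pr$ proved in Section~\ref{section 3}. Once this membership is in place, Observation~\ref{observation} allows me to substitute the Lemma~\ref{decompb} decompositions of the individual cycles into the iterated product, producing a single long $\pr$-product realising $\tau$. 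Counting the coset-type-$(2)$ factors gives $\sum_{\rho_i\geq 2}(\rho_i-1)$; the claimed value $r=|\rho|-m_1(\rho)$ then follows from $\sum_{\rho_i\geq 2}\rho_i=|\rho|-m_1(\rho)$, with any per-cycle discrepancy absorbed by applying Lemma~\ref{decompb} in its slightly padded form (one extra coset-type-$(2)$ factor per large cycle does not disturb membership in the averaged product, since the extra factor can always be chosen to act as the identity on its inherited pairs).

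The main obstacle is the middle claim that $\tau$ actually lives inside the iterated averaged product of its cycles together with its fixed pairs. Since $\pr$ averages over all trivial extensions, one must exhibit a single consistent choice of such extensions across all $l+m_1(\rho)$ factors so that their composition equals $\tau$ exactly---not merely some partial bijection with the same coset-type---and in particular with the right assignment of domain and codomain pairs. Once this is carried out for two factors via the explicit ``identity-on-the-other-factor'' extension, the general case follows by associativity and a straightforward induction on $l$. Everything else is bookkeeping with the counts.
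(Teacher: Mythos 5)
Your proposal follows the paper's proof essentially step for step: split $\Gamma(\tau)$ into its cycles, take the length-one cycles as the $\beta_i$, decompose each longer cycle via Lemma \ref{decompb}, and recombine everything with Observation \ref{observation}. You are in fact more careful than the paper, which asserts the key membership $\tau\in\alpha_1\pr\cdots\pr\alpha_{m(\rho)}\pr\beta_1\pr\cdots\pr\beta_{m_1(\rho)}$ without justification, whereas you correctly isolate it as the step requiring an explicit, mutually compatible choice of trivial extensions (identity on the pairs contributed by the other factors), plus associativity and induction on the number of cycles.

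The one place you go astray is the final ``padding'' manoeuvre, and it is worth being precise about why. It rests on a false premise: a partial bijection of coset-type $(2)$ is by definition a single $4$-cycle of its graph and can never ``act as the identity on its inherited pairs'' --- the identity on two pairs has coset-type $(1,1)$ --- so an extra such factor genuinely alters the product and cannot be silently absorbed. Moreover, the discrepancy you are trying to repair is not in your argument but in the statement itself: the natural decomposition yields $\sum_{\rho_i\geq 2}(\rho_i-1)=|\rho|-l(\rho)$ coset-type-$(2)$ factors, which is exactly what the paper's own proof computes and exactly the value consumed later in Proposition \ref{filtrations}, where the filtration identities $\deg_2(\tau)=|\rho|-l(\rho)$ and $\deg_3(\tau)=|\rho|-l(\rho)+m_1(\rho)$ force $r=|\rho|-l(\rho)$ (each $\tau_i$ contributing $1$ to $\deg_2$ and each $\beta_i$ contributing $0$). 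The ``$r=|\rho|-m_1(\rho)$'' in the statement is a slip of the paper. Drop the padding, keep $r=|\rho|-l(\rho)$, and your argument coincides with --- and slightly strengthens --- the intended proof.
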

\begin{proof}
Every cycle in the graph $\Gamma(\tau)$ associated to $\tau$ can be seen as a partial bijection on its own. Let $\beta_1,\cdots,\beta_{m_1(\rho)}$ the $m_1(\rho)$ partial bijections corresponding to the cycles of length $1$ of $\tau$. The other $m(\rho)=l(\rho)-m_1(\rho)$ partial bijections corresponding to the cycles of $\tau$ with length greater than $1$ are denoted by $\alpha_1,\cdots,\alpha_{m(\rho)}$. The length of $\alpha_i$ is $\rho_i$. By Lemma \ref{decompb}, for every $\alpha_i$ we can write:
\begin{align*}
\alpha_i\in \tau^i_1\pr \cdots \pr \tau^i_{\rho_i-1}&\text{ with $ct(\tau^i_j)=(2)$ for $j=1,\cdots,\rho_i-1$.}
\end{align*}
Then, since \begin{align*}
\tau\in \alpha_1\pr \cdots \pr \alpha_{m(\rho)}\pr \beta_1\pr\cdots\pr\beta_{m_1(\rho)},
\end{align*} we can write:
\begin{align*}
\tau\in \tau^1_1\pr \cdots \pr \tau^1_{\rho_1-1}\pr \cdots \pr \tau^{m(\rho)}_1\pr \cdots \pr \tau^{m(\rho)}_{\rho_{m(\rho)}-1}\pr \beta_1\pr\cdots\pr\beta_{m_1(\rho)}.
\end{align*}
The number of $\tau$'s that appear in this decomposition is equal to $(\rho_1-1)+\cdots+(\rho_{m(\rho)}-1)=|\rho|-m_1(\rho)-l(\rho)+m_1(\rho)=|\rho|-l(\rho)$, which proves the lemma.
\end{proof}
For a partial bijection $\tau$ with coset-type $\rho$, we define the following functions:
\begin{align*}
&\deg_2(\tau)=|\rho|-l(\rho)\\
&\deg_3(\tau)=|\rho|-l(\rho)+m_1(\rho).
\end{align*}
Consider the decomposition of $\tau$ given in Proposition \ref{decompb2}. We have:
\begin{align*}
&\deg_2(\tau_i)=1,\\
&\deg_2(\beta_i)=0,\\
&\deg_3(\tau_i)=1,\\
&\deg_3(\beta_i)=1.
\end{align*}
Thus
\begin{align*} 
&\deg_2(\tau)=|\rho|-l(\rho)=\deg_2(\tau_1)+\cdots +\deg_2(\tau_r)+\deg_2(\beta_1)+\cdots +\deg_2(\beta_{m_1(\rho)}),
\end{align*}
and 
\begin{align*} 
&\deg_3(\tau)=|\rho|-l(\rho)+m_1(\rho)=\deg_3(\tau_1)+\cdots +\deg_3(\tau_r)+\deg_3(\beta_1)+\cdots +\deg_3(\beta_{m_1(\rho)}).
\end{align*}
\begin{prop}\label{filtrations}
The functions 
\begin{align*}
&\deg_2(T_\rho)=|\rho|-l(\rho),\\
&\deg_3(T_\rho)=|\rho|-l(\rho)+m_1(\rho),
\end{align*}
define two filtrations on $\mathcal{A}_\infty$.
\end{prop}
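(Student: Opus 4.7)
The plan is to reduce the two filtration claims to a subadditivity property of $\deg_i$ under $\pr$ on partial bijections, and then to establish this property via the decomposition of Lemma \ref{decompb2} combined with Observation \ref{observation}.

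By Corollary \ref{corollary 3.2}, we have $T_\lambda \pr T_\delta = \sum_\rho b_{\lambda\delta}^\rho T_\rho$ with $b_{\lambda\delta}^\rho \geq 0$. Consequently $\deg_i$ defines a filtration on $\mathcal{A}_\infty$ if and only if $\deg_i(\rho) \leq \deg_i(\lambda) + \deg_i(\delta)$ whenever $b_{\lambda\delta}^\rho > 0$. The formula for $b_{\lambda\delta}^\rho$ together with the definition of the product in $\mathcal{A}_{|\rho|}$ shows that $b_{\lambda\delta}^\rho > 0$ precisely when some partial bijection $\gamma$ of coset-type $\rho$ appears with positive coefficient in a product $\alpha \pr \beta$, where $\alpha, \beta$ are partial bijections of coset-types $\lambda, \delta$ respectively. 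Hence it suffices to prove
\begin{equation*}
\deg_i\bigl(ct(\gamma)\bigr) \leq \deg_i\bigl(ct(\alpha)\bigr) + \deg_i\bigl(ct(\beta)\bigr), \quad i \in \{2,3\},
\end{equation*}
for all partial bijections $\alpha, \beta$ and every $\gamma$ appearing in $\alpha \pr \beta$.

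To prove this subadditivity I would apply Lemma \ref{decompb2} to both $\alpha$ and $\beta$: this gives a $\pr$-expression of $\alpha$ as a product of $\deg_2(ct(\alpha))$ partial bijections of coset-type $(2)$ and $m_1(ct(\alpha))$ of coset-type $(1)$, and similarly for $\beta$. Iterating Observation \ref{observation}, any $\gamma$ appearing in $\alpha \pr \beta$ also appears with positive coefficient in a $\pr$-product of $a := \deg_2(ct(\alpha)) + \deg_2(ct(\beta))$ two-cycles and $b := m_1(ct(\alpha)) + m_1(ct(\beta))$ one-cycles. Since $\deg_3(ct(\alpha)) + \deg_3(ct(\beta)) = a+b$, the subadditivity reduces to the following statement: if $\gamma$ appears with positive coefficient in any $\pr$-product of $a$ two-cycles and $b$ one-cycles, then $\deg_2(ct(\gamma)) \leq a$ and $\deg_3(ct(\gamma)) \leq a+b$. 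I would prove this by induction on $a+b$. The base case $a+b=1$ is immediate since a single elementary partial bijection $x$ satisfies $\deg_2(x)\in\{0,1\}$ and $\deg_3(x)=1$. For the inductive step, I would pick an intermediate $\gamma'$ in the prefix of the product with $\gamma \in \gamma' \pr x$ for some elementary $x$, apply the induction hypothesis to $\gamma'$, and reduce to the single-step inequality
\begin{equation*}
\deg_2(ct(\gamma)) \leq \deg_2(ct(\gamma')) + \deg_2(x), \qquad \deg_3(ct(\gamma)) \leq \deg_3(ct(\gamma')) + 1.
\end{equation*}

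The main obstacle is this single-step inequality, i.e. the case where one factor is a one-cycle or a two-cycle. To handle it, I would unfold the definition of $\pr$: any $\gamma \in \gamma' \pr x$ has the form $\widetilde{\gamma'} \circ \widetilde{x}$ for trivial extensions sharing a common intermediate domain. Trivial extensions preserve $\deg_2$, so the $\deg_2$ inequality reduces to a case analysis of the graph $\Gamma$: composing with a two-cycle acts on at most two of the pairs $\rho(k)$ and therefore either merges two cycles of $\Gamma(\widetilde{\gamma'})$ into one, or splits one cycle into two, so $|\rho|-l(\rho)$ changes by $\pm 1$; composing with a one-cycle acts on a single pair and has a similarly controlled effect. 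The $\deg_3$ inequality is obtained by the same case analysis, tracking the increments of $m_1(\rho)$ simultaneously with those of $|\rho|-l(\rho)$, and using that the trivial extensions only introduce parts equal to $1$, which are exactly those counted by $m_1$. This enumeration of local graph moves, though elementary, is the bulk of the work.
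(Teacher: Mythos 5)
Your proposal is correct and follows essentially the same route as the paper: reduce via Lemma \ref{decompb2} and Observation \ref{observation} to the single-step inequality $\deg_i(\theta\pr\mathcal{C})\leq\deg_i(\theta)+\deg_i(\mathcal{C})$ for $\mathcal{C}$ of coset-type $(1)$ or $(2)$, then establish it by a case analysis of how the domain of $\mathcal{C}$ meets the cycles of $\Gamma(\theta)$ (new cycle, extension, split, merge, or no change). The only cosmetic difference is that you decompose both factors into elementary cycles while the paper decomposes only one, which changes nothing essential.
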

\begin{proof}
Let $\sigma$ and $\tau$ be two partial bijections with coset-type $\lambda$ and $\rho$. We want to show that:
\begin{align*}
&\deg_i(\sigma\pr \tau)\leq \deg_i(\sigma)+\deg_i(\tau),&\text{for $i=1,2$,}
\end{align*}
where $\displaystyle{\deg_i(\sigma\pr \tau)=\max_{\alpha\in \sigma\pr \tau}\deg_i(\alpha)}$.\\
Because of Proposition \ref{decompb2}, $\tau$ can be decomposed into cycles of lengths $2$ and $1$. In other words, there exist $r=|\rho|-m_1(\rho)$ partial bijections $\tau_1,\cdots,\tau_r$ with coset-type $(2)$ and $m_1(\rho)$ partial bijections $\beta_1,\cdots,\beta_{m_1(\rho)}$ with coset-type $(1)$ such that:
\begin{equation*}
\tau\in \tau_1\pr \cdots \pr \tau_r \pr \beta_1\pr \cdots\pr \beta_{m_1(\rho)}.
\end{equation*}
For any partial bijection $\alpha$ such that $\alpha\in \sigma\pr \tau$, we have $\alpha\in \sigma\pr\tau_1\pr \cdots \pr \tau_r \pr \beta_1\pr \cdots\pr \beta_{m_1(\rho)}$ by Observation \ref{observation} since $\tau\in \tau_1\pr \cdots \pr \tau_r \pr \beta_1\pr \cdots\pr \beta_{m_1(\rho)}$. Thus,
\begin{align*}
\deg_i(\sigma\pr \tau)&\leq \deg_i(\sigma\pr \tau_1\pr \cdots \pr \tau_r \pr \beta_1\pr \cdots\pr \beta_{m_1(\rho)})&\text{for $i=1,2.$}
\end{align*}
If for any cycle $\mathcal{C}$ with length $2$ or $1$ and for any partial bijection $\theta$ we have,
\begin{align}\label{filt}
&\deg_i(\theta\pr \mathcal{C})\leq \deg_i(\theta)+\deg_i(\mathcal{C}), & \text{for $i=2,3$,}
\end{align} then we can write:
\begin{align*}
\deg_i(\sigma\pr \tau)&\leq \deg_i(\sigma\pr \tau_1\pr \cdots \pr \tau_r \pr \beta_1\pr \cdots\pr \beta_{m_1(\rho)})\\
&\leq \deg_i(\sigma\pr \tau_1\pr \cdots \pr \tau_r \pr \beta_1\pr \cdots\pr \beta_{m_1(\rho)-1})+\deg_i(\beta_{m_1(\rho)})\\
&\vdots \\
&\leq \deg_i(\sigma)+\deg_i(\tau_1)+ \cdots + \deg_i(\tau_r) + \deg_i(\beta_1)+ \cdots + \deg_i(\beta_{m_1(\rho)})\\
&\leq \deg_i(\sigma)+\deg_i(\tau).
\end{align*}
Therefore it is enough to prove the formula (\ref{filt}). Let $\theta$ be a partial bijection with coset-type $\delta$. If $$\mathcal{C}=(c_1,c_2:c_3,c_4)$$ is a cycle of length $1$, we have two cases:
\begin{enumerate}
\item If $\lbrace c_3, c_4\rbrace$ is in the domain of $\theta$: In this case the partial bijections that appear in the expansion of the product $\theta\pr \mathcal{C}$ have the same coset-type as $\theta$ and we have 
\begin{align*}
&\deg_i(\theta\pr \mathcal{C})\leq \deg_i(\theta)+\deg_i(\mathcal{C}), & \text{ for $i=2,3$}.
\end{align*}
\item If not, then all the partial bijections that appear in the expansion of the product $\theta\pr \mathcal{C}$ have the coset-type $\delta\cup (1)$. Then, we can check easily that $\deg_i(\theta\pr \mathcal{C})\leq \deg_i(\theta)+\deg_i(\mathcal{C})$, for $i=2,3$.
\end{enumerate} 
Now, if $\mathcal{C}=(c_1,c_2:c_3,c_4:c_5,c_6:c_7,c_8)$ is a cycle of length $2$, the figure of $\mathcal{C}$ is represented on Figure \ref{cycle C}.
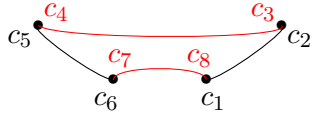
\begin{figure}[htbp]
\begin{tikzpicture}
\foreach \angle / \label in
{216/$c_4$,
252/$c_7$, 288/$c_8$, 324/$c_3$}
{
\node at (\angle:2cm) {\footnotesize $\bullet$};
\draw[red] (\angle:1.7cm) node{\textsf{\label}};
}
\foreach \angle / \label in
{216/$c_5$,
252/$c_6$, 288/$c_1$, 324/$c_2$}
{\draw (\angle:2.3cm) node{\textsf{\label}};}
\draw[red] (216:2cm) .. controls + (0.2,-0.2) and +(0.2,-0.2) .. (324:2cm);
\draw (216:2cm) .. controls + (-0.2,0) and +(-0.2,0) .. (252:2cm);
\draw[red]	(252:2cm)		    .. controls + (0,0.2) and + (0,0.2) .. (288:2);
\draw	(288:2)		    .. controls + (0.2,0) and + (0.2,0) .. (324:2);
\end{tikzpicture}
\caption{The cycle $\mathcal{C}$.}
\label{cycle C}
\end{figure}\\
We have $4$ cases. We give for each case the general result without the details of the proofs. They simply consist in computing compositions of permutations.   
\begin{enumerate}
\item $\lbrace c_3,c_4\rbrace$ and $\lbrace c_7,c_8\rbrace$ do not appear in the domain of $\theta$:

In this case, the partial bijections that appear in the expansion of the product $\theta\pr \mathcal{C}$ have coset-type $\delta\cup (2)$.
\item one of the sets $\lbrace c_3,c_4\rbrace$ and $\lbrace c_7,c_8\rbrace$ (for example $\lbrace c_3,c_4\rbrace$) appears in the domain of a cycle $\omega$ of $\theta$ and the other does not.

Suppose that $\omega$ is as represented on Figure \ref{figure in second case}.
\begin{figure}[htbp]
\begin{tikzpicture}
\foreach \angle / \label in
{ 0/$\omega_{20}$, 36/$\omega_{3}$, 72/$\omega_{4}$, 108/$\omega_{7}$, 144/$\omega_{8}$, 180/$\omega_{11}$, 216/$\omega_{12}$,
252/$\omega_{15}$, 288/$\omega_{16}$, 324/$\omega_{19}$}
{
\node at (\angle:2cm) {\footnotesize $\bullet$};
\draw[red] (\angle:1.7cm) node{\textsf{\label}};
}
\foreach \angle / \label in
{ 0/$\omega_1$, 36/$\omega_2$, 72/$\omega_5=c_3$, 108/$\omega_6=c_4$, 144/$\omega_9$, 180/$\omega_{10}$, 216/$\omega_{13}$,
252/$\omega_{14}$, 288/$\omega_{17}$, 324/$\omega_{18}$}
{\draw (\angle:2.3cm) node{\textsf{\label}};}
\draw (0:2cm) .. controls + (.2,.1) and +(.2,.1) .. (36:2cm);
\draw[red] (36:2cm) .. controls + (-.5,.1) and + (0,-.2) .. (72:2);
\draw (72:2cm) .. controls + (0,.2) and + (0,.2) .. (108:2);
\draw[red]	(108:2)		  .. controls + (0,-.2) and + (.5,.1) .. (144:2);
\draw	(144:2)		  .. controls + (-.2,0) and + (-0.2,0) .. (180:2);
\draw[red]	(180:2)		  .. controls + (.2,0) and + (0.2,0) .. (216:2);
\draw (216:2cm) .. controls + (-0.2,0) and +(-0.2,0) .. (252:2cm);
\draw[red]	(252:2cm)		    .. controls + (0,0.2) and + (0,0.2) .. (288:2);
\draw	(288:2)		    .. controls + (0.2,0) and + (0.2,0) .. (324:2);
\draw[red]	(324:2)		    .. controls + (-0.2,0) and + (-0.2,0) .. (0:2);
\end{tikzpicture}
\caption{The cycle $\omega$ of $\theta$.}
\label{figure in second case}
\end{figure}
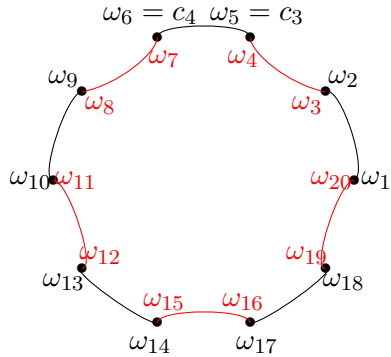\\
Then, a cycle with the form drawn on Figure \ref{form in second case} appears in the expansion of the product $\theta\pr \mathcal{C}$.
\begin{figure}[htbp]
\begin{tikzpicture}
\foreach \angle / \label in
{ 0/$\omega_{20}$, 30/$\omega_{3}$, 60/$\omega_{4}$, 90/, 120/, 150/$\omega_{7}$, 180/$\omega_{8}$,
210/$\omega_{11}$, 240/$\omega_{12}$, 270/$\omega_{15}$, 300/$\omega_{16}$, 330/$\omega_{19}$}
{
\node at (\angle:2cm) {\footnotesize $\bullet$};
\draw[red] (\angle:1.7cm) node{\textsf{\label}};
}
\foreach \angle / \label in
{ 0/, 30/, 60/$c_2$, 90/$c_1$, 120/$c_6$, 150/$c_5$, 180/,
210/, 240/, 270/, 300/, 330/}
{\draw (\angle:2.3cm) node{\textsf{\label}};}
\draw (0:2cm) .. controls + (.2,.1) and +(.2,.1) .. (30:2cm);
\draw[red] (30:2cm) .. controls + (-.5,.1) and + (0,-.2) .. (60:2);
\draw (60:2cm) .. controls + (0,.2) and + (0,.2) .. (90:2);
\draw[red]	(90:2)		  .. controls + (0,-.2) and + (0,-.2) .. (120:2);
\draw	(120:2)		  .. controls + (-.2,0) and + (-0.2,0) .. (150:2);
\draw[red]	(150:2)		  .. controls + (.2,0) and + (0.2,0) .. (180:2);
\draw (180:2cm) .. controls + (-0.2,0) and +(-0.2,0) .. (210:2cm);
\draw[red]	(210:2cm)		    .. controls + (0,0.2) and + (0,0.2) .. (240:2);
\draw	(240:2)		    .. controls + (0,-0.3) and + (0,-0.2) .. (270:2);
\draw[red]	(270:2)		    .. controls + (-0.2,0.2) and + (-0.2,0.2) .. (300:2);
\draw	(300:2)		    .. controls + (0.2,-0.2) and + (0.2,-0.2) .. (330:2);
\draw[red]	(330:2)		    .. controls + (-0.2,0) and + (-0.2,0) .. (0:2);
\end{tikzpicture}
\caption{The form of the cycle.}
\label{form in second case}
\end{figure}
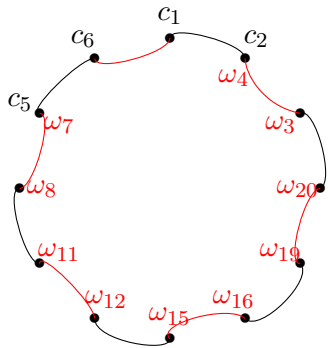\\
Note that some exterior labels are missing in this figure. To explain this, let us recall that the product $\alpha_1\pr \alpha_2$ of two partial bijections $\alpha_1$ and $\alpha_2$ is defined using an average of some partial bijections $\alpha$. When the extremity on an edge have no exterior labels, that means that the elements of any pair $\rho(k)$ different from $\lbrace c_1,c_2\rbrace$ and $\lbrace c_5,c_6\rbrace$ can be used as labels and that we shall average over all possibilities. We will also use this convention in the last two cases.\\
Thus, in this case, the coset-type of each partial bijection that appears in the expansion of the product $\theta\pr \mathcal{C}$ has the same number of parts as $\delta$ and its size is equal to $|\delta|+1$.
\item Both sets $\lbrace c_3,c_4\rbrace$ and $\lbrace c_7,c_8\rbrace$ appear in the domain of the same cycle $\omega$ of $\theta$.\\
Consider the exterior labels of this cycle $\omega$. Among them there are $c_3,c_4,c_7$ and $c_8$ and we know that $c_3$ and $c_4$ (resp. $c_7$ and $c_8$) appear consecutively. Then there are two cases that shall be considered separately. Either labels appear in cyclic order $c_3,c_4,\cdots,c_8,c_7$ or $c_3,c_4,\cdots,c_8,c_7$. These two cases are represented on Figure \ref{figure in third case} and Figure \ref{second figure in third case}.
\begin{figure}[htbp]
\begin{tikzpicture}
\foreach \angle / \label in
{ 0/$\omega_{20}$, 36/$\omega_{3}$, 72/$\omega_{4}$, 108/$\omega_{7}$, 144/$\omega_{8}$, 180/$\omega_{11}$, 216/$\omega_{12}$,
252/$\omega_{15}$, 288/$\omega_{16}$, 324/$\omega_{19}$}
{
\node at (\angle:2cm) {\footnotesize $\bullet$};
\draw[red] (\angle:1.7cm) node{\textsf{\label}};
}
\foreach \angle / \label in
{ 0/$\omega_1$, 36/$\omega_2$, 72/$\omega_5=c_3$, 108/$\omega_6=c_4$, 144/$\omega_9$, 180/$\omega_{10}$, 216/$\omega_{13}$,
252/$\omega_{14}$, 288/$\omega_{17}=c_7$, 324/$\omega_{18}=c_8$}
{\draw (\angle:2.3cm) node{\textsf{\label}};}
\draw (0:2cm) .. controls + (.2,.1) and +(.2,.1) .. (36:2cm);
\draw[red] (36:2cm) .. controls + (-.5,.1) and + (0,-.2) .. (72:2);
\draw (72:2cm) .. controls + (0,.2) and + (0,.2) .. (108:2);
\draw[red]	(108:2)		  .. controls + (0,-.2) and + (.5,.1) .. (144:2);
\draw	(144:2)		  .. controls + (-.2,0) and + (-0.2,0) .. (180:2);
\draw[red]	(180:2)		  .. controls + (.2,0) and + (0.2,0) .. (216:2);
\draw (216:2cm) .. controls + (-0.2,0) and +(-0.2,0) .. (252:2cm);
\draw[red]	(252:2cm)		    .. controls + (0,0.2) and + (0,0.2) .. (288:2);
\draw	(288:2)		    .. controls + (0.2,0) and + (0.2,0) .. (324:2);
\draw[red]	(324:2)		    .. controls + (-0.2,0) and + (-0.2,0) .. (0:2);
\end{tikzpicture}
\caption{The first possible form of $\omega$.}
\label{figure in third case}
\end{figure}
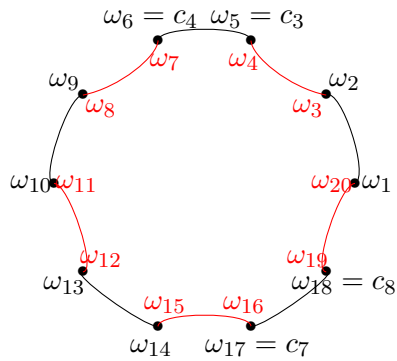
\begin{figure}[htbp]
\begin{tikzpicture}
\foreach \angle / \label in
{ 0/$\omega_{20}$, 36/$\omega_{3}$, 72/$\omega_{4}$, 108/$\omega_{7}$, 144/$\omega_{8}$, 180/$\omega_{11}$, 216/$\omega_{12}$,
252/$\omega_{15}$, 288/$\omega_{16}$, 324/$\omega_{19}$}
{
\node at (\angle:2cm) {\footnotesize $\bullet$};
\draw[red] (\angle:1.7cm) node{\textsf{\label}};
}
\foreach \angle / \label in
{ 0/$\omega_1$, 36/$\omega_2$, 72/$\omega_5=c_3$, 108/$\omega_6=c_4$, 144/$\omega_9$, 180/$\omega_{10}$, 216/$\omega_{13}$,
252/$\omega_{14}$, 288/$\omega_{17}=\color{blue}{c_8}$, 324/$\omega_{18}=\color{blue}{c_7}$}
{\draw (\angle:2.3cm) node{\textsf{\label}};}
\draw (0:2cm) .. controls + (.2,.1) and +(.2,.1) .. (36:2cm);
\draw[red] (36:2cm) .. controls + (-.5,.1) and + (0,-.2) .. (72:2);
\draw (72:2cm) .. controls + (0,.2) and + (0,.2) .. (108:2);
\draw[red]	(108:2)		  .. controls + (0,-.2) and + (.5,.1) .. (144:2);
\draw	(144:2)		  .. controls + (-.2,0) and + (-0.2,0) .. (180:2);
\draw[red]	(180:2)		  .. controls + (.2,0) and + (0.2,0) .. (216:2);
\draw (216:2cm) .. controls + (-0.2,0) and +(-0.2,0) .. (252:2cm);
\draw[red]	(252:2cm)		    .. controls + (0,0.2) and + (0,0.2) .. (288:2);
\draw	(288:2)		    .. controls + (0.2,0) and + (0.2,0) .. (324:2);
\draw[red]	(324:2)		    .. controls + (-0.2,0) and + (-0.2,0) .. (0:2);
\end{tikzpicture}
\caption{The second possible form of $\omega$.}
\label{second figure in third case}
\end{figure}
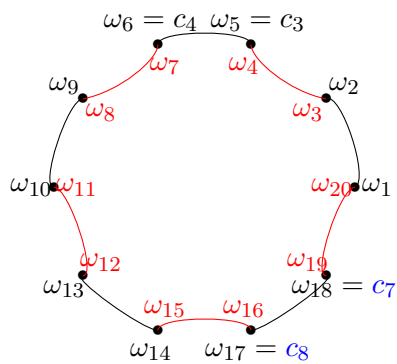\\
Note that, on Figure \ref{second figure in third case} , labels $c_7$ and $c_8$ are switched.\\
The form of cycles that appear in the expansion of the product $\theta\pr \mathcal{C}$ in each case is given on Figure \ref{form in the third case} and Figure \ref{second form in the third case}.
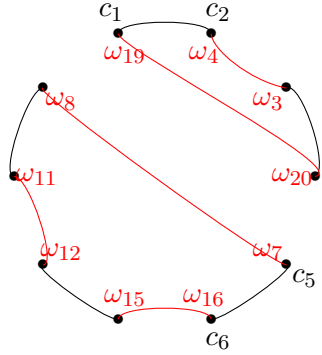
\begin{figure}[htbp]
\begin{tikzpicture}
\foreach \angle / \label in
{ 0/$\omega_{20}$, 36/$\omega_{3}$, 72/$\omega_{4}$, 108/$\omega_{19}$, 144/$\omega_{8}$, 180/$\omega_{11}$, 216/$\omega_{12}$,
252/$\omega_{15}$, 288/$\omega_{16}$, 324/$\omega_{7}$}
{
\node at (\angle:2cm) {\footnotesize $\bullet$};
\draw[red] (\angle:1.7cm) node{\textsf{\label}};
}
\foreach \angle / \label in
{ 0/, 36/, 72/$c_2$, 108/$c_1$, 144/, 180/, 216/,
252/, 288/$c_6$, 324/$c_5$}
{\draw (\angle:2.3cm) node{\textsf{\label}};}
\draw (0:2cm) .. controls + (.2,.1) and +(.2,.1) .. (36:2cm);
\draw[red] (36:2cm) .. controls + (-.5,.1) and + (0,-.2) .. (72:2);
\draw (72:2cm) .. controls + (0,.2) and + (0,.2) .. (108:2);
\draw[red]	(108:2)		  .. controls + (0,-.2) and + (.5,.1) .. (0:2);
\draw	(144:2)		  .. controls + (-.2,0) and + (-0.2,0) .. (180:2);
\draw[red]	(180:2)		  .. controls + (.2,0) and + (0.2,0) .. (216:2);
\draw (216:2cm) .. controls + (-0.2,0) and +(-0.2,0) .. (252:2cm);
\draw[red]	(252:2cm)		    .. controls + (0,0.2) and + (0,0.2) .. (288:2);
\draw	(288:2)		    .. controls + (0.2,0) and + (0.2,0) .. (324:2);
\draw[red]	(324:2)		    .. controls + (-0.2,0) and + (-0.2,0) .. (144:2);
\end{tikzpicture}
\caption{cycle correspond to first form of $\omega$.}
\label{form in the third case}
\end{figure}
\begin{figure}[htbp]
\begin{tikzpicture}
\foreach \angle / \label in
{ 0/$\omega_{20}$, 36/$\omega_{3}$, 72/$\omega_{4}$, 108/$\omega_{16}$, 144/$\omega_{15}$, 180/$\omega_{11}$, 216/$\omega_{12}$,
252/$\omega_{8}$, 288/$\omega_{7}$, 324/$\omega_{19}$}
{
\node at (\angle:2cm) {\footnotesize $\bullet$};
\draw[red] (\angle:1.7cm) node{\textsf{\label}};
}
\foreach \angle / \label in
{ 0/, 36/, 72/$c_2$, 108/$c_1$, 144/, 180/, 216/,
252/, 288/$c_5$, 324/$c_6$}
{\draw (\angle:2.3cm) node{\textsf{\label}};}
\draw (0:2cm) .. controls + (.2,.1) and +(.2,.1) .. (36:2cm);
\draw[red] (36:2cm) .. controls + (-.5,.1) and + (0,-.2) .. (72:2);
\draw (72:2cm) .. controls + (0,.2) and + (0,.2) .. (108:2);
\draw[red]	(108:2)		  .. controls + (0,-.2) and + (.5,.1) .. (144:2);
\draw	(144:2)		  .. controls + (-.2,0) and + (-0.2,0) .. (180:2);
\draw[red]	(180:2)		  .. controls + (.2,0) and + (0.2,0) .. (216:2);
\draw (216:2cm) .. controls + (-0.2,0) and +(-0.2,0) .. (252:2cm);
\draw[red]	(252:2cm)		    .. controls + (0,0.2) and + (0,0.2) .. (288:2);
\draw	(288:2)		    .. controls + (0.2,0) and + (0.2,0) .. (324:2);
\draw[red]	(324:2)		    .. controls + (-0.2,0) and + (-0.2,0) .. (0:2);
\end{tikzpicture}
\caption{cycle correspond to second form of $\omega$.}
\label{second form in the third case}
\end{figure}
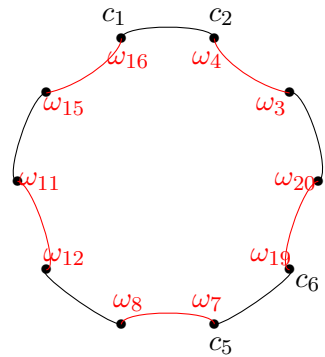\\
Thus, in the first case the cycle is cut into two cycles, then the coset-type of each partial bijection that appears in the expansion of the product $\theta\pr \mathcal{C}$ has the same size as $\delta$ and $l(\delta)+1$ parts. While in the second nothing changes and the coset-type of each partial bijection that appears in the expansion of the product $\theta\pr \mathcal{C}$ has the same size as $\delta$ and the same number of parts. 
\item the two sets $\lbrace c_3,c_4\rbrace$ and $\lbrace c_7,c_8\rbrace$ appear in the domain of two different cycles of $\theta$.\\
For example we take the two cycles represented on Figure \ref{figure of the fourth case}.\\
\begin{figure}[htbp]
\begin{center}
\begin{tikzpicture}
\foreach \angle / \label in
{ 0/$\omega_{12}$, 36/$\omega_3$, 72/$\omega_4$, 108/$\omega_7$, 144/$\omega_{8}$, 180/$\omega_{11}$, 216/$t_8$,
252/$t_3$, 288/$t_4$, 324/$t_7$}
{
\node at (\angle:2cm) {\footnotesize $\bullet$};
\draw[red] (\angle:1.7cm) node{\textsf{\label}};
}
\foreach \angle / \label in
{ 0/$\omega_1$, 36/$\omega_2$, 72/$\omega_5=c_3$, 108/$\omega_6=c_4$, 144/$\omega_{9}$, 180/$\omega_{10}$, 216/$t_1=c_7$,
252/$t_2=c_8$, 288/$t_5$, 324/$t_6$}
{\draw (\angle:2.3cm) node{\textsf{\label}};}
\draw (0:2cm) .. controls + (.2,.1) and +(.2,.1) .. (36:2cm);
\draw[red] (36:2cm) .. controls + (-.5,.1) and + (0,-.2) .. (72:2);
\draw (72:2cm) .. controls + (0,.2) and + (0,.2) .. (108:2);
\draw[red]	(108:2)		  .. controls + (0,-.2) and + (.5,.1) .. (144:2);
\draw	(144:2)		  .. controls + (-.2,0) and + (-0.2,0) .. (180:2);
\draw[red]	(180:2)		  .. controls + (.2,0.2) and + (0.2,0.2) .. (0:2);
\draw[red] (216:2cm) .. controls + (0.2,-0.2) and +(0.2,-0.2) .. (324:2cm);
\draw (216:2cm) .. controls + (-0.2,0) and +(-0.2,0) .. (252:2cm);
\draw[red]	(252:2cm)		    .. controls + (0,0.2) and + (0,0.2) .. (288:2);
\draw	(288:2)		    .. controls + (0.2,0) and + (0.2,0) .. (324:2);
\end{tikzpicture}
\caption{The two cycles.}
\label{figure of the fourth case}
\end{center}
\end{figure}
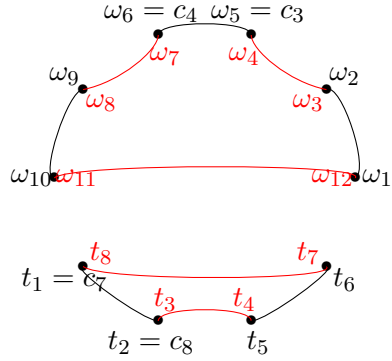
Then a cycle with the form drawn on Figure \ref{form of the fourth case} appears in the expansion of the product $\theta\pr \mathcal{C}$.\\
\begin{figure}[htbp]
\begin{center}
\begin{tikzpicture}
\foreach \angle / \label in
{ 0/$\omega_{12}$, 36/$\omega_{3}$, 72/$\omega_{4}$, 108/$t_{3}$, 144/$t_4$, 180/$t_7$, 216/$t_8$,
252/$\omega_{7}$, 288/$\omega_{8}$, 324/$\omega_{11}$}
{
\node at (\angle:2cm) {\footnotesize $\bullet$};
\draw[red] (\angle:1.7cm) node{\textsf{\label}};
}
\foreach \angle / \label in
{ 0/, 36/, 72/$c_2$, 108/$c_1$, 144/, 180/, 216/$c_6$,
252/$c_5$, 288/, 324/}
{\draw (\angle:2.3cm) node{\textsf{\label}};}
\draw (0:2cm) .. controls + (.2,.1) and +(.2,.1) .. (36:2cm);
\draw[red] (36:2cm) .. controls + (-.5,.1) and + (0,-.2) .. (72:2);
\draw (72:2cm) .. controls + (0,.2) and + (0,.2) .. (108:2);
\draw[red]	(108:2)		  .. controls + (0,-.2) and + (.5,.1) .. (144:2);
\draw	(144:2)		  .. controls + (-.2,0) and + (-0.2,0) .. (180:2);
\draw[red]	(180:2)		  .. controls + (.2,0) and + (0.2,0) .. (216:2);
\draw (216:2cm) .. controls + (-0.2,0) and +(-0.2,0) .. (252:2cm);
\draw[red]	(252:2cm)		    .. controls + (0,0.2) and + (0,0.2) .. (288:2);
\draw	(288:2)		    .. controls + (0.2,0) and + (0.2,0) .. (324:2);
\draw[red]	(324:2)		    .. controls + (-0.2,0) and + (-0.2,0) .. (0:2);
\end{tikzpicture}
\caption{The two cycles are joined.}
\label{form of the fourth case}
\end{center}
\end{figure}
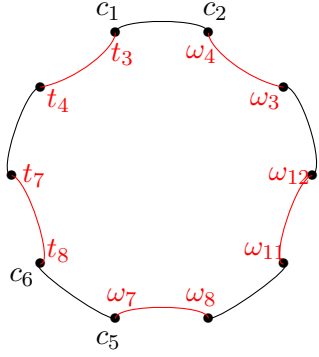
In this case the two cycles are joined to form a cycle, thus the coset-type of each partial bijection that appears in the expansion of the product $\theta\pr \mathcal{C}$ has the same size as $\delta$ and $l(\delta)-1$ parts.
\end{enumerate}
In these $4$ cases, we can check that we have $deg_i(\theta\pr \mathcal{C})\leq deg_i(\theta)+deg_i(\mathcal{C})$, for $i=2,3$ and this ends the proof of Proposition \ref{filtrations}.
\end{proof}

These filtrations allow us to get upper bounds for the degree of the polynomials $f_{\lambda\delta}^{\rho}(n)$ that appear in Theorem \ref{Theorem 2.1}.
\begin{prop}
Let $\lambda$, $\delta$ and $\rho$ be three proper partitions, the degree of $f_{\lambda\delta}^{\rho}(n)$ satisfies:
\begin{align*}
&\deg(f_{\lambda\delta}^{\rho}(n))\leq |\lambda|+|\delta|-|\rho|,\\
&\deg(f_{\lambda\delta}^{\rho}(n))\leq |\lambda|+|\delta|-|\rho|-l(\lambda)-l(\delta)+l(\rho).
\end{align*}
\end{prop}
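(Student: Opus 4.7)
The plan is to read both bounds off from equation \eqref{alpha} of Section \ref{section 4}, which rewrites $f_{\lambda\delta}^\rho(n)$ as
\[f_{\lambda\delta}^\rho(n) = \frac{1}{|\lambda|!|\delta|!}\sum_{j=0}^{|\lambda|+|\delta|-|\rho|} b_{\lambda\delta}^{\rho\cup(1^j)}(n-|\rho|)_j(|\rho|+j)_{|\rho|}2^{|\rho|+j-|\lambda|-|\delta|}.\]
Every factor in the summand other than $(n-|\rho|)_j$ is independent of $n$, and $(n-|\rho|)_j$ is a polynomial in $n$ of degree exactly $j$. The first upper bound $|\lambda|+|\delta|-|\rho|$ is therefore immediate from the range of summation.

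For the second bound, the idea is to use the filtration $\deg_3$ from Proposition \ref{filtrations} to force the coefficients $b_{\lambda\delta}^{\rho\cup(1^j)}$ with large $j$ to vanish. Recall from Corollary \ref{corollary 3.2} that $T_\lambda \pr T_\delta = \sum_\tau b_{\lambda\delta}^\tau T_\tau$, so the filtration property gives, for each $\tau = \rho \cup (1^j)$ actually appearing in the expansion,
\[\deg_3(T_\tau) \leq \deg_3(T_\lambda) + \deg_3(T_\delta).\]
Since $\lambda$, $\delta$ and $\rho$ are proper (so $m_1(\lambda) = m_1(\delta) = m_1(\rho) = 0$), a direct calculation gives $\deg_3(T_\lambda) = |\lambda| - l(\lambda)$ and $\deg_3(T_\delta) = |\delta| - l(\delta)$, while for $\tau = \rho \cup (1^j)$ one has $|\tau| = |\rho|+j$, $l(\tau) = l(\rho)+j$ and $m_1(\tau) = j$, yielding $\deg_3(T_\tau) = |\rho| - l(\rho) + j$.

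Substituting into the filtration inequality yields
\[j \leq |\lambda|+|\delta|-|\rho|-l(\lambda)-l(\delta)+l(\rho),\]
so all nonzero contributions to the sum defining $f_{\lambda\delta}^\rho(n)$ have index $j$ bounded by this quantity. Since each such term is a polynomial of degree $j$ in $n$, the second bound follows. No real obstacle arises beyond the bookkeeping of partition statistics; the substantive work is already carried out in Proposition \ref{filtrations}, and the present proof is essentially a one-line substitution of the relevant partition data into that filtration property.
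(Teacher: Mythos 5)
Your proposal is correct and follows essentially the same route as the paper: the first bound comes from the range of $j$ in the expansion of $f_{\lambda\delta}^{\rho}$ (equivalently, the filtration $\deg_1$), and the second from applying the filtration $\deg_3$ of Proposition \ref{filtrations} to the terms $T_{\rho\cup(1^j)}$ appearing in $T_\lambda\pr T_\delta$, with the same substitution of partition statistics. The only cosmetic difference is that the paper also records the (uninformative) inequality coming from $\deg_2$, which you rightly omit.
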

\begin{proof}
From the proof of Theorem \ref{Theorem 2.1}, the degree of $f_{\lambda\delta}^{\rho}(n)$ is as follows: 
\begin{align}
\deg(f_{\lambda\delta}^{\rho}(n))=\max_{b_{\lambda\delta}^{\rho\cup(1^j)}\neq 0} j.
\end{align}
On the other hand, since $\deg_i$ is a filtration for $i=1,2,3$, we obtain the following inequality:
\begin{align}
\deg_i(T_\lambda\pr T_\delta)=\max_{\tau \text{ proper},j\geq 0 \atop{b_{\lambda\delta}^{\tau \cup (1^j)}\neq 0}} \deg_i(T_{\tau\cup (1^j)})\leq \deg_i(T_\lambda)+\deg_i(T_\delta).
\end{align}
Then we get the three following inequalities corresponding to the three filtrations $\deg_1$, $\deg_2$ and $\deg_3$:
\begin{align}
&\max_{\tau \text{ proper},j\geq 0 \atop{b_{\lambda\delta}^{\tau \cup (1^j)}\neq 0}} |\tau|+j\leq |\lambda|+|\delta|,&\\
&\max_{\tau \text{ proper},j\geq 0 \atop{b_{\lambda\delta}^{\tau \cup (1^j)}\neq 0}} |\tau|+j-l(\tau)-j\leq |\lambda|-l(\lambda)+|\delta|-l(\delta),&\\
&\max_{\tau \text{ proper},j\geq 0 \atop{b_{\lambda\delta}^{\tau \cup (1^j)}\neq 0}} |\tau|+j-l(\tau)-j+j\leq |\lambda|-l(\lambda)+|\delta|-l(\delta).
\end{align}
The second inequality does not give any information about the degree of $f_{\lambda\delta}^{\rho}(n)$, while using the first and the third inequality with the fact that:
\begin{align*}
\deg(f_{\lambda\delta}^{\rho}(n))=\max_{b_{\lambda\delta}^{\rho\cup(1^j)}\neq 0} j\leq \max_{\tau \text{ proper},j\geq 0 \atop{b_{\lambda\delta}^{\tau \cup (1^j)}\neq 0}}j,
\end{align*}
we get directly the result.
\end{proof}
\section*{Acknowledgement}
I would like to thank my advisors Jean-Christophe Aval and Valentin Féray for helpful discussions and suggestions and enlightening ideas of some proofs. I am also deeply grateful to them for reading and improving this article several times. 
\bibliographystyle{alpha}
\bibliography{fpsac2013paper}

\begin{thebibliography}{{Las}08}

\bibitem[AC12]{Aker20122465}
Kürşat Aker and Mahir~Bilen Can.
\newblock Generators of the hecke algebra of $({S}_{2n},{H}_n)$.
\newblock {\em Advances in Mathematics}, 231(5):2465 -- 2483, 2012.

\bibitem[Can13]{Can}
Mahir~Bilen Can.
\newblock Personal communication.
\newblock 2013.

\bibitem[Cor75]{CoriHypermaps}
R.~Cori.
\newblock Un code pour les graphes planaires et ses applications.
\newblock Number~27 in Ast{\'e}risque. Soci{\'e}t{\'e} Math{\'e}matique de
  France, 1975.
\newblock 169 pages.

\bibitem[DF12]{dolega2012kerov}
M.~Dołęga and V.~Féray.
\newblock On {K}erov polynomials for {J}ack characters.
\newblock preprint arXiv:1201.1806, 2012.

\bibitem[FH59]{FaharatHigman1959}
H.~Farahat and G.~Higman.
\newblock The centres of symmetric group rings.
\newblock {\em Proc. Roy. Soc. (A)}, 250:212--221, 1959.

\bibitem[GJ96]{GouldenJacksonLocallyOrientedMaps}
I.~P. Goulden and D.~M. Jackson.
\newblock Maps in locally orientable surfaces, the double coset algebra, and
  zonal polynomials.
\newblock {\em Can. J. Math.}, 48(3):569--584, 1996.

\bibitem[GS98]{GoupilSchaefferStructureCoef}
A.~Goupil and G.~Schaeffer.
\newblock Factoring n-cycles and counting maps of given genus.
\newblock {\em Eur. J. Comb.}, 19(7):819--834, 1998.

\bibitem[IK99]{IvanovKerov1999}
V.~Ivanov and S.~Kerov.
\newblock The algebra of conjugacy classes in symmetric groups, and partial
  permutations.
\newblock {\em Zap. Nauchn. Sem. S.-Peterburg. Otdel. Mat. Inst. Steklov.
  (POMI)}, 256(3):95--120, 1999.

\bibitem[Jam61]{James1961}
Alan~T. James.
\newblock Zonal polynomials of the real positive definite symmetric matrices.
\newblock {\em Annals of Mathematics}, 74(3):456--469, 1961.

\bibitem[JV90]{JaVi90}
D.M. Jackson and T.I. Visentin.
\newblock A character theoretic approach to embeddings of rooted maps in an
  orientable surface of given genus.
\newblock {\em Trans. AMS}, 322:343--363, 1990.

\bibitem[{Las}08]{Lassalle}
M.~{Lassalle}.
\newblock {A positivity conjecture for Jack polynomials}.
\newblock {\em Math. Res. Lett.}, 15(4):661--681, 2008.

\bibitem[Mac95]{McDo}
I.G. Macdonald.
\newblock {\em Symmetric functions and Hall polynomials}.
\newblock Oxford Univ. Press, second edition, 1995.

\bibitem[M{\'e}l13]{meliot2013partial}
Pierre-Lo{\"\i}c M{\'e}liot.
\newblock Partial isomorphisms over finite fields.
\newblock {\em arXiv preprint arXiv:1303.4313}, 2013.

\bibitem[OO97]{1996q.alg.....8020O}
Andrei Okounkov and Grigori Olshanski.
\newblock Shifted {J}ack polynomials, binomial formula, and applications.
\newblock {\em Mathematical Research Letters}, 4:69--78, 1997.

\bibitem[Sag]{Sage}
Sage mathematical software, version 4.8, http://www.sagemath.org.

\bibitem[Sol02]{Solomon2002309}
Louis Solomon.
\newblock Representations of the rook monoid.
\newblock {\em Journal of Algebra}, 256(2):309 -- 342, 2002.

\end{thebibliography}
\label{sec:biblio}
\end{document}